\documentclass[11 pt]{article}
\setlength{\textwidth}{6.5 in} \setlength{\oddsidemargin}{0 in}
\setlength{\evensidemargin}{0 in}    % these are self explanatory
\setlength{\textheight}{9in}
\setlength{\topmargin}{-.5in}

%This file contains my Latex shorthand.

\usepackage{latexsym}
\usepackage{amsthm}
\usepackage{amssymb}
\usepackage{amsmath}
\usepackage{amsfonts}
\usepackage{mathrsfs}
\usepackage[all]{xy}
\usepackage{texdraw}
\usepackage{graphicx}
\usepackage{psfrag}
\usepackage{makeidx}

%Theory
\newtheorem{Theory}{Theory}[section] %Counter for all types of Theorems
\newtheorem{theorem}[Theory]{Theorem}
\newtheorem{lemma}[Theory]{Lemma}
\newtheorem{technicalLemma}[Theory]{Technical Lemma}
\newtheorem{corollary}[Theory]{Corollary}
\newtheorem{proposition}[Theory]{Proposition}
\newtheorem{fact}{Fact}  %level of Exercise
\newtheorem{remark}[Theory]{Remark} %trivial but worth noticing
\newtheorem{question}{Question} %Open Question in my mind or in Theory
\newtheorem{conjecture}[question]{Conjecture}%Statement of suspicion

%Notation and Definitions
\newtheorem{Ntn}{Description} %Counter for all types of Notation/Definition
\newtheorem{Dn}[Ntn]{Definition}

%my shorthand
\newcommand{\be}{\begin{enumerate}}
\newcommand{\ee}{\end{enumerate}}
\newcommand{\bq}{\begin{question}}
\newcommand{\eq}{\end{question}}
\newcommand{\bcj}{\begin{conjecture}}
\newcommand{\ecj}{\end{conjecture}}
\newcommand{\bc}{\begin{corollary}}
\newcommand{\ec}{\end{corollary}}
\newcommand{\bl}{\begin{lemma}}
\newcommand{\el}{\end{lemma}}
\newcommand{\btl}{\begin{technicalLemma}}
\newcommand{\etl}{\end{technicalLemma}}
\newcommand{\bt}{\begin{theorem}}
\newcommand{\et}{\end{theorem}}
\newcommand{\bp}{\begin{proposition}}
\newcommand{\ep}{\end{proposition}}
\newcommand{\bft}{\begin{fact}}
\newcommand{\eft}{\end{fact}}
\newcommand{\brk}{\begin{remark}}
\newcommand{\erk}{\end{remark}}
\newcommand{\bd}{\begin{Dn}}
\newcommand{\ed}{\end{Dn}}

% I use a lot of greek letters...

% I use overline greek letters for inverse of greek group elements...

%some shorthand symbols

%{1_2}_{2=3}^4 and the same with parenthesies

%writes power set of argument

\author{Collin Bleak}

\title{\textsc{Free Products in R. Thompson's Group $V$}}

\author{Collin Bleak \and Olga Salazar-D\'iaz} 

% there are many more packages, add them here as you need them
\begin{document}

\maketitle

\abstract{We investigate free product structures in R. Thompson's
  group $V$, primarily by studying the topological dynamics associated
  with $V$'s action on the Cantor Set.  We show that the class of free
  products which can be embedded into $V$ includes the free product of
  any two finite groups, the free product of any finite group with
  $Q/Z$, and the countable non-abelian free groups.  We also show the
  somewhat surprising result that $Z^2*Z$ does not embed in $V$, even
  though $V$ has many embedded copies of $Z^2$ and has many embedded
  copies of free products of pairs of its subgroups.}

\setcounter{page}{1} \pagestyle{plain}
\thispagestyle{empty} \newcommand{\supp}[1]{\textrm{Supp(}#1\textrm{)}}
\newcommand{\csupp}[1]{\overline{\textrm{Supp(}#1\textrm{)}}}
\newcommand{\Imp}[1]{\textrm{I(}#1\textrm{)}}

\section{Introduction} 

We prove some results related to the subgroup structure of
R. Thompson's group $V$.  In particular, we explore conditions on
factor groups so that free products of non-trivial factors can embed
into $V$.  Our first result shows that $V$ contains many free products
of various of the isomorphism classes of its non-trivial subgroups.
Our second theorem states that (although $V$ contains many free
products as above and many copies of $Z^2$) the group $Z^2*Z$ does not
embed into $V$.  Of particular interest in this exploration is that
the non-embedding result seems very difficult to prove using algebraic
methods (say, using a presentation of $V$).  The present authors use
topological dynamics (via the characterization of $V$ as a group of
homeomorphisms of the Cantor Set $\mathfrak{C}$) to attain these
results.

Let $\mathcal{FPV}$ denote the class of groups which 
\begin{enumerate}
\item admit decompositions as free products of pairs of non-trivial
  subgroups, and
\item embed into $V$.
\end{enumerate}

Also, let $\mathcal{A}$ denote the
smallest class of groups so that 
\begin{enumerate}
\item $\mathcal{A}$ contains all finite groups,
\item $Z\in \mathcal{A}$,
\item $Q/Z\in\mathcal{A}$, and 
\item $\mathcal{A}$ is closed under 
\begin{enumerate}
\item isomorphism,
\item passing to subgroup, and
\item taking the direct product of any finite member with any member.
\end{enumerate}
\end{enumerate}

We now state our main results.

\begin{theorem}
\label{embedding}

If $K_1$, $K_2\in \mathcal{A}$ are non-trivial groups, then the group
$K_1*K_2\in \mathcal{FPV}$.

\end{theorem}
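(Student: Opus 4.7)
Plan: The approach is to apply the classical ping-pong lemma for free products to $V$'s action on the Cantor set $\mathfrak{C}$. Fix a clopen partition $\mathfrak{C} = U_1 \sqcup U_2$ into two non-empty pieces. The plan is to construct embeddings $\rho_i : K_i \hookrightarrow V$ with the property that for every non-identity $k \in K_1$, $\rho_1(k)(U_2) \subseteq U_1$, and symmetrically $\rho_2(k)(U_1) \subseteq U_2$ for non-identity $k \in K_2$. Given such embeddings, the ping-pong argument forces any reduced alternating word to be non-identity (by tracking its action on one of the $U_i$), so the natural map $K_1 * K_2 \to V$ is injective.

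The core work is to prove the following \emph{embedding lemma}: for any $K \in \mathcal{A}$ and any proper non-trivial clopen subset $W \subsetneq \mathfrak{C}$, there is an embedding $\rho: K \hookrightarrow V$ such that $\rho(k)(\mathfrak{C} \setminus W) \subseteq W$ for all non-identity $k \in K$. I would prove this by induction on the structure of $\mathcal{A}$. For finite $K$, partition $W$ into $|K|-1$ clopen pieces and combine with $\mathfrak{C} \setminus W$ to form a $|K|$-element clopen partition of $\mathfrak{C}$ labeled by elements of $K$; let $K$ act by its left-regular permutation of the pieces, with each piece identified with $\mathfrak{C} \setminus W$ via a chosen homeomorphism. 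Any non-identity $k$ then sends $\mathfrak{C} \setminus W$ to a different piece, which lies inside $W$. For $K = Z$, exhibit an $f \in V$ of infinite order whose orbit of $\mathfrak{C} \setminus W$ is a pairwise-disjoint family of clopen sets with non-zero iterates lying in $W$; such an $f$ can be built from an element of the embedded copy of Thompson's $F$ inside $V$ by selecting one with an attracting/repelling fixed-point pair located in $W$, so that iterated images of a clopen region near the repeller travel monotonically across $W$ toward the attractor. For $K = Q/Z$, decompose $K$ as an ascending union of finite cyclic subgroups and build compatible finite-case embeddings via iteratively refined partitions. The closure properties of $\mathcal{A}$ (subgroup, and direct product with a finite group $F$) are handled by restriction, and by further subdividing each piece of the $K$-partition into $|F|$ sub-pieces permuted by $F$'s left-regular action.

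Given the embedding lemma, apply it to $K_1$ with $W = U_1$ and to $K_2$ with $W = U_2$; ping-pong then yields $K_1 * K_2 \hookrightarrow V$. The main obstacle I anticipate is the $Z$ and $Q/Z$ cases, since $V$-elements admit only finitely many ``pieces'' in their prefix-replacement description, so realizing a globally-defined infinite-orbit dynamic that marches a fixed clopen set through disjoint clopen subsets of a prescribed region takes some care. Leveraging elements of the embedded copy of $F$ in $V$ with attracting/repelling fixed-point pairs, for which orbits near the repeller travel monotonically across $\mathfrak{C}$, should be the right tool; all other parts of the induction reduce to finite combinatorics on clopen partitions.
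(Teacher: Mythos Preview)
Your approach coincides with the paper's: both reduce Theorem~\ref{embedding} to a ping-pong argument after proving that every $K\in\mathcal{A}$ embeds in $V$ so that some fixed clopen set $\mathfrak{C}_n$ is mapped off itself by every nontrivial element; the paper names this property \emph{demonstrative} (with \emph{demonstration node} $n$) and establishes it by the same induction over the defining clauses of~$\mathcal{A}$.

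Three details in your outline need attention. First, your direct-product step is inverted: for $F\times K$ with $F$ finite you should lay down the finite group's $|F|$-piece regular partition first and then act diagonally by a copy of $K$ on each $F$-orbit piece, taking the demonstration set to be $K$'s demonstration sub-piece inside the identity $F$-piece (this is exactly the paper's Lemma~\ref{DClosureProps}); there is no finite ``$K$-partition'' to subdivide when $K$ is infinite. Second, your $Q/Z$ sketch hides a genuine obstacle: in a tower $Z/n!\hookrightarrow Z/(n{+}1)!$ realized by cyclic permutations of iteratively refined partitions, the level-$(n{+}1)$ generator is a one-step shift of the finer partition and will \emph{not} carry a fixed coarse piece entirely off itself, so a single clopen ``good set'' cannot persist through the tower in the na\"ive way. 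The paper does not attempt an inductive construction here; it invokes an explicit demonstrative embedding of $Q/Z$ established elsewhere. Third, the Ping-Pong Lemma as stated requires one factor of order at least~$3$, so $Z_2*Z_2$ must be (and in the paper is) handled by a separate direct computation exhibiting two involutions whose product has infinite order.
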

Some previously observed consequences of the above theorem are
firstly, that $V$ contains embedded copies of all of the countable
non-abelian free groups (an obvious fact), and secondly, that $V$
contains embedded copies of PSL($2,Z$) $\cong Z_2 * Z_3$ ($T\leq V$,
and recall that $T$ is $C^0$ conjugate to the group of PSL($2$,$Z$)
homeomorphisms of $RP^1$ with rational breaks in slope (see
\cite{GhysSergiescu, TTransvections})).

On the other hand, the following theorem shows that while free
products of groups in the isomorphism classes of $V$'s subgroups are often
available in $V$, one cannot choose these subgroups indiscriminately.
\begin{theorem} 
\label{nonembedding}

The group $Z^2*Z$ does not embed in $\mathcal{FPV}$.

\end{theorem}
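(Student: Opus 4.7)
My plan is to argue by contradiction using the topological dynamics of $V$'s action on the Cantor set $\mathfrak{C}$. Suppose $\phi\colon Z^2*Z\hookrightarrow V$ is an embedding; set $a=\phi(e_1)$ and $b=\phi(e_2)$ for a basis of the $Z^2$ factor, and let $c$ be the image of a generator of the $Z$ factor. Then $[a,b]=1$ in $V$, while no relation beyond $[a,b]=1$ holds among $a$, $b$, $c$. The strategy is to extract from the $V$-dynamics a word $w(a,b,c)$ that must equal the identity in $V$ but that is nontrivial in $Z^2*Z$, contradicting the embedding.

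First I would catalogue the joint dynamics of the commuting pair $\{a,b\}$ on $\mathfrak{C}$. Each element of $V$ admits a finite description via a revealing tree-pair diagram, with attracting and repelling periodic points and associated rotation-like data. Using $[a,b]=1$ I would show that $b$ permutes the attractor and repeller sets of $a$ (and conversely), so that $\csupp{a}\cup\csupp{b}$ decomposes into finitely many clopen pieces on each of which $\langle a,b\rangle$ acts by commuting shift-like maps. The aim is to produce a clopen set $U$ and integers $(m,n)\neq(0,0)$ so that on $U$ the element $a^m b^n$ has very restricted form, for example agreeing with a common power of an underlying ``one-bump'' shift.

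Next I would bring $c$ into play. The pair $\{cac^{-1},\,cbc^{-1}\}$ is another commuting pair in $V$ whose attractor/repeller data is the $c$-image of that for $\{a,b\}$. Comparing the two commuting pairs across $\mathfrak{C}$, the finiteness of the tree-pair descriptions should force a short word in $a,b,c$, for instance of the form $[a^m b^n,\; c\,a^p b^q c^{-1}]$ or $c^{-1}(a^m b^n)c\cdot a^{-p}b^{-q}$, to act as the identity on a clopen set, and then, after multiplying by an element of $\langle a,b\rangle$ supported off that set, to equal the identity of $V$. Any such relation with appropriate nonzero exponents fails in $Z^2*Z$, since there every reduced word alternating between the two free factors is nontrivial.

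The main obstacle will be controlling the case in which $c$ shuffles the support of $\langle a,b\rangle$ in a wild, non-periodic fashion: one must rule out that $\csupp{a}\cup\csupp{b}$ and its $c$-translates are arranged so as to support a ping-pong game between $\langle a,b\rangle$ and $\langle c\rangle$. The technical heart of the argument should therefore be a rigidity lemma of the following flavour: up to passing to a finite-index subgroup, every copy of $Z^2$ in $V$ is a group of shifts on finitely many invariant clopen cones, and any element of $V$ interacting with such a shift group produces, via its own revealing structure, a nontrivial short word identity with it. Establishing this rigidity is the step I expect to be substantially harder than the subsequent word-extraction.
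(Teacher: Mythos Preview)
Your outline has the right shape---contradiction via a forced relation coming from the dynamics of a commuting pair in $V$---but the concrete mechanism you propose does not close the gap, and the paper's actual argument fills it with two ideas you are missing.

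First, the candidate words you suggest, such as $c^{-1}(a^m b^n)c\cdot a^{-p}b^{-q}$ or $[a^m b^n,\,c a^p b^q c^{-1}]$, cannot be forced to vanish in $V$ without an additional preparation step. The obstacle you correctly flag---that $c$ may scatter the support of $\langle a,b\rangle$ wildly---is exactly what prevents any fixed short word from working. The paper does not try to prove a rigidity lemma for a general $c$; instead it \emph{iteratively replaces} $\gamma=\phi(c)$ by nested $(\alpha,\beta,\gamma)$-commutators of the form $[\alpha^{x_1}\beta^{y_1},[\alpha^{x_2}\beta^{y_2},\ldots,[\alpha^{x_n}\beta^{y_n},\gamma^{z_n}]^{z_{n-1}}\ldots]^{z_1}]$, proving separately (Lemma~\ref{abcFree}) that each such replacement keeps $\langle\alpha,\beta,\gamma\rangle\cong Z^2*Z$. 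After finitely many steps one arrives at a $\gamma$ whose support misses a neighborhood of $I(\alpha)\cup I(\beta)$. This ``cleaning'' procedure is the technical heart of the proof, and your proposal has no analogue of it; your suggested rigidity lemma would amount to doing this in one blow, and there is no indication of how.

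Second, even after the preparation, the paper does not find a word equal to the identity. It takes $\omega=[\gamma,\gamma^{\alpha^x\beta^y}]$ for suitable $x,y$ (chosen so that within $\csupp{\alpha}\cup\csupp{\beta}$ the two supports are disjoint) and shows by a detailed orbit bookkeeping that every $\omega$-orbit has length $1$, $2$, or $3$; hence $\omega$ has order dividing $6$. The contradiction is that $\omega$ is the image of a nontrivial element of the torsion-free group $Z^2*Z$. Your plan of ``multiplying by an element of $\langle a,b\rangle$ supported off that set'' to kill a word globally does not work here: elements of $\langle\alpha,\beta\rangle$ have fixed, uncontrollable supports, and in any case a relation of that shape could easily collapse to something trivial in $Z^2*Z$. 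The finite-order conclusion, rather than a trivial-word conclusion, is what the dynamics actually deliver.
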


The original motivation for the work in this paper sprang from the
question ``Does $Z^2 * Z$ embed in Thompson's group $V$?'', which
was asked of the first author by Mark Sapir.  Some context is given
below.

The team of Holt, R\"over, Rees and Thomas introduce and analyze the
class of groups which have context free co-word problem in
\cite{HRRTcfc}.  They show this class of groups is nice in various
ways. For instance, it is closed under direct products, standard
restricted wreath products where the top group is in the subclass of
groups with context-free word problem, passing to finitely generated
subgroups, and passing to finite index overgroups.  They further
conjecture that the class is not closed under free product, and
currently, one of the lead candidates for proving this conjecture is
$(Z^2)*Z$.

In the paper \cite{LehnertSchweitzer}, Lehnert and Schweitzer show
that R. Thompson's group $V$ is a CFC group.  In particular, if $Z^2 *
Z$ embeds in $V$, then it too would be a CFC group.

Thus, our chief result, in terms of this thread of the development of
the theory of CFC groups, simply says that $Z^2 * Z$ remains a
reasonable candidate for proving the conjecture that the class of CFC
groups is not closed under free product.

There is another non-embedding result for $V$ which is of interest in
this context.  Higman in \cite{HigmanSimplicity} uses his
\emph{semi-normal forms} to study the dynamics of automorphism groups
$G_{n,r}$ acting on specific algebras (where $n\geq 2$ and $r$ are
positive integers).  Semi-normal forms can help detect infinite orbits
under these actions, and other nice properties of elements of the
groups $G_{n,r}$.  In any case, $V = G_{2,1}$ in Higman's notation,
and Higman shows in \cite{HigmanSimplicity}, using semi-normal forms,
that GL($3,Z$) does not embed into $G_{n,r}$ for any indices $n$ and
$r$.  (Brin's revealing pair technology has many significant parallels
with Higman's semi-normal forms, though it was developed
independently.)  

There is a technical dynamical property $(*)$ for subgroups of $V$,
and a set $\mathcal{D}$ of subgroups of $V$ which satisfy $(*)$, which
we call the \emph{demonstrative groups}.  These groups are easy to
find as factor subgroups of free product subgroups in $V$; property
$(*)$ is useful when attempting to build Ping-Pong constructions (we
will briefly discuss Fricke and Klein's classical ``Ping-Pong Lemma''
in the next section).

While we do not know that every group isomorphic to a group in
$\mathcal{D}$ can be found in $\mathcal{A}$, it is not too
hard to show that every group in $\mathcal{A}$ can embed in $V$ as a
group in $\mathcal{D}$.  Thus, Theorem \ref{embedding} is actually a
corollary of the following lemma.

\begin{lemma}
\label{DInFFPV}

If $K_1$, $K_2\in \mathcal{D}$ are non-trivial groups then $K_1*K_2\in
\mathcal{FPV}$.

\end{lemma}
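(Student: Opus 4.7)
The plan is a Ping-Pong argument, set up so that demonstrativity is exactly the hypothesis one needs to verify the Ping-Pong hypothesis. Since $K_1,K_2\in\mathcal{D}$, each $K_i$ comes with a witness to property $(*)$: a clopen ``demonstration set'' $D_i\subset\mathfrak{C}$ on which the dynamics of $K_i$ is rich enough that nontrivial elements of $K_i$ have a controlled, Ping-Pong-compatible effect on $\mathfrak{C}\setminus D_i$. I will use these two data and the transitivity of $V$ on clopen subsets of $\mathfrak{C}$ to manufacture conjugate copies of $K_1$ and $K_2$ inside $V$ whose demonstration sets are disjoint and suitably placed.

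First, I would partition $\mathfrak{C}$ as a disjoint union of two nonempty clopen subsets $C_1, C_2$. Using the fact that $V$ acts transitively on the set of proper clopen subsets of $\mathfrak{C}$, I would choose an element of $V$ conjugating $K_i$ to a subgroup $K_i'\leq V$ whose demonstration set lies in $C_i$, and whose elements act as the identity on $\mathfrak{C}\setminus C_i$. (This last is the customary way demonstrativity is exploited: one can replace $K_i$ by a conjugate copy supported in any prescribed proper clopen subset, so that the factor subgroups have disjoint supports.)

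Next, I would apply the classical Ping-Pong Lemma to $K_1'$ and $K_2'$ using the complements $X_i = \mathfrak{C}\setminus D_i'$ (where $D_i'$ is the conjugated demonstration set inside $C_i$) as the Ping-Pong courts. Property $(*)$, together with the fact that $K_i'$ fixes $\mathfrak{C}\setminus C_i$ pointwise, should give that every non-identity $g\in K_i'$ sends $X_{3-i}$ into $D_i'\subset X_{3-i}^c$. Tracking a basepoint in $X_1\cap X_2$ under an alternating reduced word in $K_1'$ and $K_2'$ then shows that every such word represents a non-identity element of $V$, so $\langle K_1', K_2'\rangle\cong K_1*K_2$ and therefore $K_1*K_2\in\mathcal{FPV}$.

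The main obstacle is not the Ping-Pong step itself but the preparation: verifying that demonstrativity is preserved under the conjugation used to realise $K_i$ with support in $C_i$, and that the dynamical condition $(*)$ is phrased precisely strongly enough to yield the ``send the other court into my demonstration set'' condition for every nontrivial element (not just generators). Once the class $\mathcal{D}$ and property $(*)$ are fixed, both of these should follow directly from the definition; the force of the lemma is really that $(*)$ was designed with exactly this Ping-Pong construction in mind.
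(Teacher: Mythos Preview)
Your plan has a fatal structural error: you arrange for $K_1'$ and $K_2'$ to have \emph{disjoint supports} (each $K_i'$ acting as the identity on $\mathfrak{C}\setminus C_i$). Elements with disjoint supports commute, so $\langle K_1',K_2'\rangle\cong K_1\times K_2$, not $K_1*K_2$. Concretely, with your courts $X_i=\mathfrak{C}\setminus D_i'$: any nontrivial $g\in K_1'$ fixes $C_2$ pointwise, but $X_2=\mathfrak{C}\setminus D_2'\supset C_1$ and $g(C_1)=C_1$, so $g(X_2)=X_2\not\subset X_1$. The Ping-Pong hypothesis fails outright. You have also inverted what demonstrativity says: the demonstration node is not where the support lives, it is a clopen set that every nontrivial element throws entirely off itself (Lemma~\ref{demonstrationNode}).

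The paper's argument exploits this correctly and without any disjoint-support manoeuvre. Using Lemma~\ref{leftRight}, one takes a copy $G_0$ of $K_1$ with demonstration node ``$0$'' and a copy $H_1$ of $K_2$ with demonstration node ``$1$''. Then every nontrivial $g\in G_0$ sends $\mathfrak{C}_0$ into $\mathfrak{C}_1$ and every nontrivial $h\in H_1$ sends $\mathfrak{C}_1$ into $\mathfrak{C}_0$; with courts $X_1=\mathfrak{C}_1$, $X_2=\mathfrak{C}_0$ the Ping-Pong Lemma applies directly. Note the demonstration sets are \emph{complementary}, not nested in disjoint halves, and the two groups' supports may (and typically do) overlap. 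Finally, the Ping-Pong Lemma as stated requires $|H_1|\geq 3$, so the case $K_1\cong K_2\cong Z_2$ is handled separately by an explicit construction; your proposal does not address this.
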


One reasonable place to try to extend the class $\mathcal{A}$ is to
replace property (4.c) with closure under the general extension of a
finite member by any member, creating a class $\mathcal{B}$.  In this
case $\mathcal{B}$ would contain all of the virtually cyclic groups.
It may be easy to prove that $\mathcal{D}$ contains isomorphic copies
of the virtually cyclic groups; the present authors have made no
efforts in that direction.

  We state the following questions.

\begin{question}
Is it true that if $G$ and $H$ are non-trivial subgroups of $V$, with
$\mid G\mid\geq 3$ and $\langle G,H\rangle\cong G*H$ in $V$, then
there are distinct non-empty sets $P_G$ and $P_H$ in $\mathfrak{C}$ so that for
any non-trivial elements $g\in G$ and $h\in H$ we have $P_H g\subset
P_G$ and $P_G h\subset P_H$?
\end{question}

Colloquially, must every free product of groups in $V$ arise from a
Ping-Pong in $V$?

Let $\mathcal{CFPV}$ be the smallest class of groups which contains
$\mathcal{B}$ and which is closed under
\begin{enumerate}
\item isomorphism,
\item passing to subgroups,
\item extending any finite member by any member, and
\item taking free products of any two of its members.
\end{enumerate}
\begin{question}
Does $\mathcal{FPV} = \mathcal{CFPV}$?
\end{question}

On one final note, we should like to mention the Java software package
which Roman Kogan of SUNY Stony Brook wrote at the NSF funded Cornell
Summer 2008 Mathematics Research Experience for Undergraduates.  His
software provides a convenient interface for calculating and storing
products, inverses, and conjugation in the higher dimensional Thompson
groups $nV$ created by Brin in \cite{BrinHigherV} (and thus, in the
R. Thompson groups $F<T<V$ as well).  While we did not use the
software to prove anything in this paper, we often used it during
exploration to verify that our initial constructions worked as intended.

The authors would like to thank Daniel Farley for his participation in
early phases of this project.  The first author would also like to
thank Mark Sapir for interesting discussions with regards to the
family of R. Thompson groups.  This note arose as a consequence of one
such discussion.

\section{Basic tools}
In this section, we define the terminology of the paper, and state
some easy or known facts that will be useful for us in our arguments.

\subsection{The Cantor Set}
We define language and notation describing the Cantor
Set, and various aspects and subsets thereof.

We let $\mathcal{T}_2$ represent the infinite binary tree.  We label
the nodes of $\mathcal{T}_2$ by finite binary strings in the usual
fashion (thus, the node labelled $1101$ corresponds to the node found
by starting at the root of $\mathcal{T}_2$, and travelling down to the
right child twice, then to the left child, and finally to the right
child, see the diagram below).  We identify the Cantor Set
$\mathfrak{C}$ with the infinite descending paths from the root of
$\mathcal{T}_2$, as is standard.  We will say \emph{$x\in
  \mathfrak{C}$ underlies a node $n$ of $\mathcal{T}_2$} if $n$ is a
node of $\mathcal{T}_2$ and the path for $x$ passes through $n$.  We
will call the set of such points the \emph{Cantor Set underlying $n$}, and denote this set by $\mathfrak{C}_n$.

By way of example, the set of points in the Cantor Set underlying
``1101'' is given as the set of elements of $\{0,1\}^{N}$ which begin with
the string ``1101''.  These correspond to all of the infinite descending
paths on $\mathcal{T}_2$ which start at the root and pass through the
node ``1101'' labelled in the diagram below.

\begin{center}
\includegraphics[height=150pt,width=150 pt]{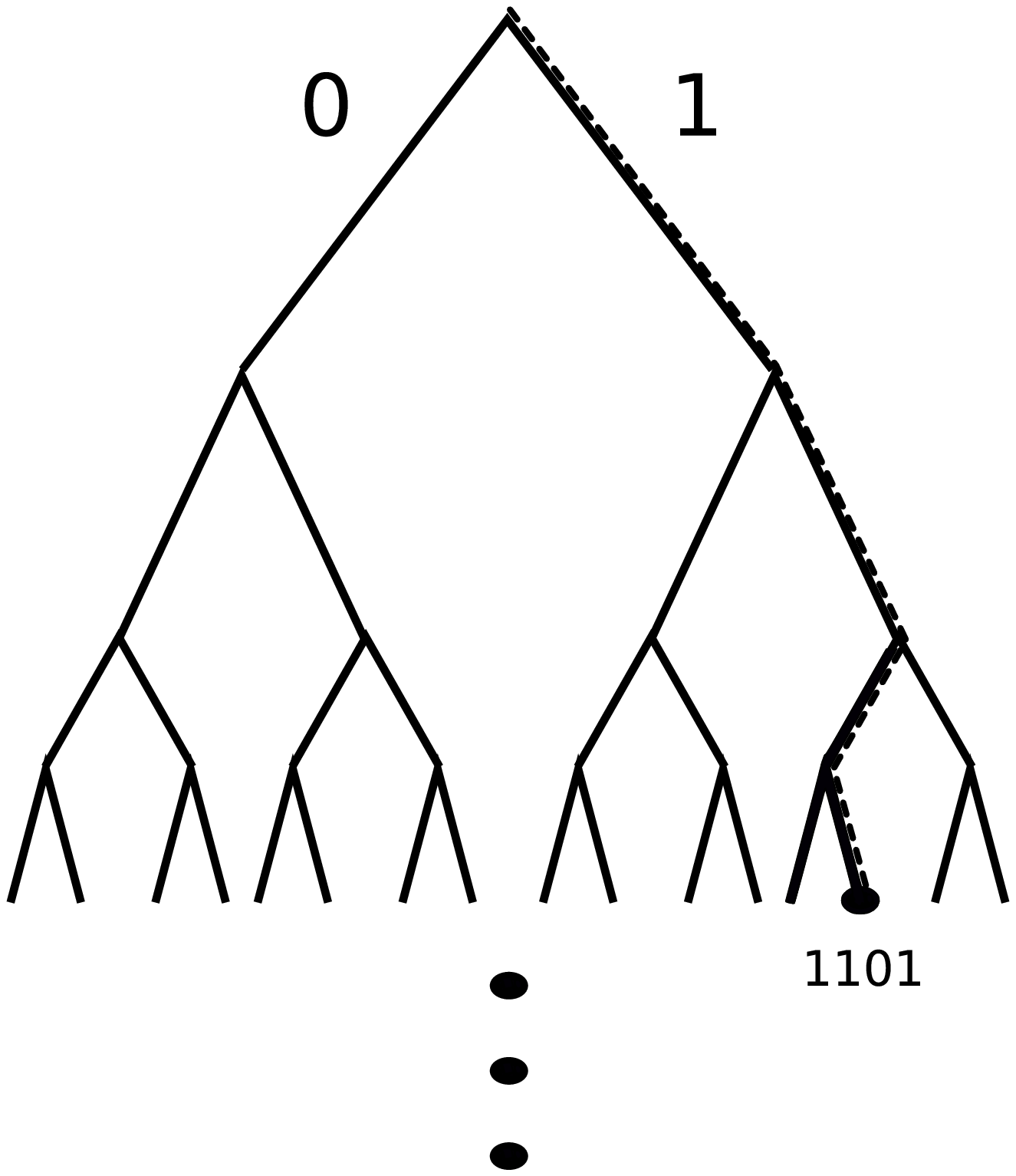}
\end{center}

Thus, for the example above, we have
\[
\mathfrak{C}_{1101} = \left\{(s_n)\in\mathfrak{C}\mid (s_n) = 1101s_4s_5s_6\ldots\right\}
\]

For a finite collection $S$ of points in $\mathfrak{C}$, we will
specify that \emph{a neighborhood $U$ of $S$} is precisely given as a
finite union of the Cantor Sets underlying a finite set $\mathcal{N}$
of nodes of $\mathcal{T}_2$, where we further require that for each
point $s\in S$, there is precisely one node $n_s\in \mathcal{N}$ so
that $s$ underlies the node $n_s$.  In particular, a neighborhood of
point $x$ in $\mathfrak{C}$ will be thought of as the Cantor Set
underlying some node $n$ of $\mathcal{T}_2$, where $x$ underlies $n$.
This specification of the usual notion of neighborhood should cause
the reader no confusion in the body of this paper.  Given a set
$S\subset\mathfrak{C}$, we will call any open subset
$U\subset\mathfrak{C}$ containing $S$ a \emph{general neighborhood of
  $S$}.  We shall require the use of a general neighborhood only one
time in this note.

\subsection{R. Thompson's group $V$}
The group $V$ is a specific collection of homeomorphisms of
$\mathfrak{C}$, under the operation of composition.  Each such element
$u$ of $V$ can be represented non-uniquely as a pair of finite binary
subtrees of $\mathcal{T}_2$, each with the same number $n$ of leaves,
together with an element $\sigma\in \Sigma_n$, the permutation group
on $n$ letters.  

We will write such a representation as $u \sim (D,R,\sigma)$, where $D$
and $R$ are our finite binary trees, and $\sigma$ is the permutation,
as mentioned above.  We will often call this simply a \emph{tree-pair}, and
drop explicit mention of the permutation unless we need it in the
course of events.  We may write $P = (D,R)$ in this case, considering
$P$ to be a tree pair (with permutation) which guides us as a rule
defining the element $u$ of $V$ it is intended to represent.  For the
remainder of this section, we will assume $u$ and $P$ are fixed for
the purpose of discussion, and to help us define terminology.

The rule which translates a tree-pair into a homeomorphism of the
Cantor Set is as follows.  Consider $D$ and $R$ as subtrees of
$\mathcal{T}_2$.  For each leaf $i$ of $D$ (where $i\in
\left\{1,2,\ldots ,n\right\}$), map the Cantor Set underlying $i$ in
the unique orientation-preserving, bijective, affine fashion to the
Cantor Set underlying the leaf $\sigma(i)$ of $R$.  Thus $D$
represents the domain, and $R$ represents the range.  

Thoughout this note, we will have elements of $V$ act on $\mathfrak{C}$
on the right, so that if $x\in \mathfrak{C}$, we will denote by $xv$
the image of $x$ under the action of $v$.  If $X\subset \mathfrak{C}$,
and $v\in V$, then we will denote by $Xv$ the set $\left\{xv|x\in
X\right\}$.  Following these conventions, if $u$ is also an element of
$V$, then the symbols $u^v$ and $[u,v]$ are defined by the equations
$u^v = v^{-1}uv$ and $[u,v] = u^{-1}v^{-1}uv = (v^{-1})^u\cdot
v=u^{-1}\cdot u^v$.

If $v\in V$, we define $\supp{v} = \{x\in\mathfrak{C}\mid xv\neq
x\}$.  We now have the following standard lemma from permutation group
theory, which we use freely in the remainder.

\begin{lemma}
\label{throwSupport}
Let $u,v\in V$, then $\supp{u^v}= \supp{u}v$.
\end{lemma}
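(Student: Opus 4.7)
The plan is to prove the set equality by a direct chain of equivalences, using only the definition of support, the right-action convention fixed just above the statement, and the definition $u^v = v^{-1}uv$. Nothing dynamical is needed; this is purely a formal manipulation in the permutation group of $\mathfrak{C}$.

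First I would fix $x \in \mathfrak{C}$ and chase the equivalence $x \in \supp{u^v} \iff x \in \supp{u}v$. Starting from the left, $x \in \supp{u^v}$ means $x u^v \neq x$, which by definition of conjugation expands to $x v^{-1} u v \neq x$. Applying $v^{-1}$ to both sides (which is legitimate because $v$ is a bijection of $\mathfrak{C}$), this is equivalent to $x v^{-1} u \neq x v^{-1}$, i.e. $x v^{-1} \in \supp{u}$. Finally, $x v^{-1} \in \supp{u}$ is the same as $x \in \supp{u} v$, by the very definition of the set $\supp{u}v = \{ y v \mid y \in \supp{u}\}$ (set $y = xv^{-1}$).

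Stringing these equivalences together yields $\supp{u^v} = \supp{u} v$, which is the claim. There is no real obstacle here: the only point requiring a moment's care is the bookkeeping on which side $v$ acts, since the paper uses a right action and writes $u^v = v^{-1} u v$; once one commits to this convention the computation is forced. If desired, the proof could be presented in one displayed line as
\[
x \in \supp{u^v} \iff xv^{-1}uv \neq x \iff xv^{-1}u \neq xv^{-1} \iff xv^{-1}\in \supp{u} \iff x \in \supp{u}v,
\]
and this is all that is needed.
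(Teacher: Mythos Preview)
Your proof is correct. The paper does not actually supply a proof of this lemma---it simply calls it ``the following standard lemma from permutation group theory''---so your direct chain of equivalences is exactly the expected argument and there is nothing to compare.
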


Finally, given $v\in V$ and $x\in \mathfrak{C}$, we define the orbit
of $x$ under $v$ to be the set 
\[
\mathcal{O}\left(x,v\right) = \left\{xv^k\mid k\in Z\right\}.
\]
  If this set
does not have cardinality one, then we say that \emph{the orbit of
  $x$ under $v$ is non-trivial}.  Other language to this effect is to
be interpreted in the obvious fashion and should cause the reader no
confusion.

We note that given any $v\in V$, there is an induced ``action'' on
``most'' of the nodes of the infinite binary tree $\mathcal{T}_2$, in
the following sense.  If we pass to any leaf $l$ of a representative
tree pair for $v$, or any node $m$ below $l$ (lying further down some
infinite descending path from the root which passes through $l$), then
the node $l$ (or $m$) will be mapped to a node $n$ of $\mathcal{T}_2$;
that is, the Cantor set underlying $l$ (or $m$) will be carried
affinely and bijectively to the Cantor Set underlying $n$.  We thus
say $lv = n$ or $mv =n$ when we are thinking of the induced action of
$v$ on nodes of $\mathcal{T}_2$.  (Note, nodes above the
leaves of the domain tree will very likely get ``split'' by $v$, so
that there is always a finite subset of $\mathcal{T}$ where this sort
of induced action makes no sense.)

\subsection{Revealing pairs}
We will make use of Brin's revealing pair technology (see
\cite{BrinHigherV}) in order to define interesting subsets of the
Cantor Set $\mathfrak{C}$ for specific elements of $V$.  The general
argument can be given without using revealing pairs, but they provide
a useful context for our discussion.  We will define here everything
which is required for this note.  Lemmas and corollaries appearing in
this subsection before Lemma \ref{commonImportantPoints} can all
either be found in \cite{BrinHigherV} or in \cite{Salazar}, or are
easy consequences of the results found therein.

For the tree pair $P = (D,R)$, we can consider the common tree $C =
D\cap R$, which is the finite rooted subtree of $\mathcal{T}_2$ with
each node a node of both $D$ and $R$.  Each leaf of $C$ is either the
root of a descending subtree of $D$, or the root of a descending
subtree of $R$ (in either such case, we call these descending trees
\emph{components of} $D\backslash R$ or of $R\backslash D$, as the
case may be), or a leaf of both $D$ and $R$ (in this case, we call the
leaf a \emph{neutral leaf} of $P$).

The tree pair $P$ is called a \emph{revealing pair} if it satisfies
two conditions.  The first condition is that for each complementary
component $X$ of $D\backslash R$, $X$ has a leaf $r_X$ which, under
iteration of the rule $P$, travels through the neutral leaves of $C$
until it is finally mapped to the root of $X$ ($r_X$ is unique for $X$,
and is called the \emph{repelling leaf of $X$} or the \emph{repeller
  of $X$}).  The second condition is similar; if $n_Y$ is the root of a
component $Y$ of $R\backslash D$, then iteration of the rule $P$ has
$n_Y$ travel through the neutral leaves of $C$ until it finally maps
beneath itself to a leaf $l_Y$ of $Y$ (the leaf $l_y$ is called the
\emph{attracting leaf of $Y$} or the \emph{attractor of $Y$}).  By the
discussion preceeding Lemma 10.2 of \cite{BrinHigherV}, each element
in $V$ has a revealing pair representative (and as before, this is not
unique).

Let $v\in V$.  An easy consequence of Proposition 10.1 of
\cite{BrinHigherV} is that there is a minimal non-negative power $k$
so that $v^k$ acts on $\mathfrak{C}$ with no non-trivial finite
orbits.  Set $u = v^k$, so that $u$ admits no
non-trivial finite orbits in its action on $\mathfrak{C}$.  Finally,
let us assume $P = (D,R)$ is actually a revealing pair representing $u$.

We obtain a list of useful, obvious results, which we leave to the
reader to verify.  The phrase ``maps to'' in this following lemma is
referring to the action of $w$ on nodes of $\mathcal{T}_2$.

\begin{lemma}
\label{noOrbitsProperties}

Suppose $w\in V$ so that $w$ admits no non-trivial finite orbits in
its action on the Cantor Set $\mathfrak{C}$.  Suppose further that the
revealing pair $P_w = (D_w,R_w)$ represents $w$.

\begin{enumerate}
\item Any repeller $r_X$ of a component $X$ of $D_w\backslash R_w$ always
  maps to the root of $X$ by the rule $P_w$.
\item The root $n_Y$ of any component $Y$ of $R_w\backslash D_w$ always
  maps to the attractor $l_Y$ of $Y$ by the rule $P_w$.
\item The map $w$ restricted to any Cantor Set
  underlying a node $r_X$ or $n_Y$ as above is affine with
  slope not equal to one.
\item Every point in $\mathfrak{C}$ which is fixed by $w$ and which
  does not underly a node $r_X$ or $n_Y$ as above lies under a neutral
  leaf $n$ of the pair $P_w$ upon which $w$ must act as the identity.
\end{enumerate}
\end{lemma}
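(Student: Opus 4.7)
The plan is to handle parts (1)--(4) in order. The heart of the argument is a short affine fixed-point trick driven by the no-non-trivial-finite-orbit hypothesis, which handles parts (1) and (2); parts (3) and (4) then follow by a slope computation and a tree-geometry case analysis.

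For part (1), I would argue by contradiction. Write $\rho$ for the root of $X$, and suppose that the iteration of the rule $P_w$ sends $r_X$ through $k \geq 1$ intermediate neutral leaves $l_1, \ldots, l_k$ before arriving at $\rho$, so the chain reads $r_X \to l_1 \to \cdots \to l_k \to \rho$. The Cantor sets $\mathfrak{C}_{r_X}, \mathfrak{C}_{l_1}, \ldots, \mathfrak{C}_{l_k}$ are pairwise disjoint, since they sit beneath distinct leaves of $D_w$, while $\mathfrak{C}_{r_X} \subsetneq \mathfrak{C}_\rho$ because $r_X$ is a leaf of the subtree $X$ strictly below $\rho$. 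Therefore $w^{k+1}$ restricts to an orientation-preserving affine bijection $\mathfrak{C}_{r_X} \to \mathfrak{C}_\rho$ with slope $2^{|r_X|-|\rho|} > 1$, and an elementary linear check produces a unique fixed point $p^\ast \in \mathfrak{C}_{r_X}$. But $p^\ast w \in \mathfrak{C}_{l_1}$, which is disjoint from $\mathfrak{C}_{r_X}$, so $p^\ast w \neq p^\ast$; thus the $w$-orbit of $p^\ast$ is finite and non-trivial, of length at most $k+1$, contradicting the hypothesis on $w$.

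Part (2) follows by applying part (1) to $w^{-1}$, which is represented by the revealing pair $(R_w, D_w)$ with the inverse permutation: attractors of $w$ become repellers of $w^{-1}$, and components of $R_w \backslash D_w$ become components of the corresponding ``domain minus range'' tree for $w^{-1}$. Part (3) is then immediate: by parts (1) and (2) the rule $P_w$ sends $\mathfrak{C}_{r_X}$ affinely onto $\mathfrak{C}_\rho$ in one step with slope $2^{|r_X|-|\rho|} > 1$, and sends $\mathfrak{C}_{n_Y}$ affinely onto $\mathfrak{C}_{l_Y}$ with slope $2^{|n_Y|-|l_Y|} < 1$. In neither case is the slope equal to $1$.

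For part (4), let $p$ be fixed by $w$ and lie under a leaf $l$ of $D_w$. Since $pw = p$, we have $p \in \mathfrak{C}_l \cap \mathfrak{C}_{w(l)}$, which forces $l$ and $w(l)$ to be comparable in the tree ordering of $\mathcal{T}_2$. If $l = w(l)$, then $l$ is a leaf of both $D_w$ and $R_w$, hence a neutral leaf, and $w$ restricted to $\mathfrak{C}_l$ is an orientation-preserving affine self-bijection, which must be the identity: this is precisely the conclusion we want. Otherwise one of $l$ and $w(l)$ strictly descends the other. I would then run a short case analysis on the type of leaf $l$ is within $D_w$: a neutral leaf, an interior leaf of some $X \subseteq D_w \backslash R_w$, or a root $n_Y$ of some $Y \subseteq R_w \backslash D_w$. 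Combined with the fact that $w(l)$ is a leaf of $R_w$ and that $R_w$ does not extend below any of its own leaves, this analysis rules out every possibility except $l$ being an interior leaf of some $X$ with $w(l)$ equal to the root of $X$ (which by part (1) and the uniqueness of the repeller forces $l = r_X$), or $l = n_Y$ for some $Y$. Both of these options are excluded by hypothesis on $p$, completing the proof.

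The main obstacle I expect is the affine fixed-point computation in part (1): one must check carefully that the unique fixed point of the composite $w^{k+1} \colon \mathfrak{C}_{r_X} \to \mathfrak{C}_\rho$ actually lies inside the smaller set $\mathfrak{C}_{r_X}$. Once this single elementary check is in hand, parts (2)--(4) are clean bookkeeping exploiting the structure of revealing pairs.
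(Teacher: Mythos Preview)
Your proof is correct. The paper itself does not prove this lemma: immediately before its statement the authors write ``We obtain a list of useful, obvious results, which we leave to the reader to verify,'' and no argument is given. Your contraction/fixed-point argument for part (1) is sound (the inverse map $w^{-(k+1)}:\mathfrak{C}_\rho\to\mathfrak{C}_{r_X}$ is a genuine contraction whose image lies in $\mathfrak{C}_{r_X}\subset\mathfrak{C}_\rho$, so the fixed point lands in $\mathfrak{C}_{r_X}$ as needed), and the duality and slope observations for (2) and (3) are immediate.

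One small remark on part (4): the phrase ``by part (1) and the uniqueness of the repeller forces $l = r_X$'' is slightly off as justification. What actually pins down $l=r_X$ is the bijectivity of the permutation $\sigma$ together with part (1): since $\sigma$ is a bijection between the leaves of $D_w$ and $R_w$, exactly one leaf of $D_w$ maps to the root of $X$, and by part (1) that leaf is $r_X$. The ``uniqueness of the repeller'' stated in the paper is the uniqueness of the leaf of $X$ that \emph{eventually} reaches the root of $X$ through neutral leaves, which is a related but different assertion. The conclusion is correct either way.
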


We continue our discussion with the element $u$ constructed previously.

By an application of the standard Contraction Lemma, we observe that
if a leaf $l$ of $D$ is mapped above or below itself in $\mathcal{T}_2$ by
the rule $P$, then there will be a unique fixed point in the Cantor
Set underlying $l$ (if $l$ maps above itself, consider the
inverse map $u^{-1}$ in order to force a contraction).  Fixed points
underlying repellers of $D$ will be called \emph{repelling fixed
  points of $u$}, and fixed points underlying attractors in components of
$D\backslash R$ will be called \emph{attracting fixed points of $u$}.

\begin{corollary}
\label{fixedPointCorollary}

Under the same hypothesies as in Lemma \ref{noOrbitsProperties}, for
each repeller $r_x$ there is a unique repelling fixed point $p_x$
underlying it, and for each attractor $l_Y$ there is a unique
attracting fixed point underlying it.

\end{corollary}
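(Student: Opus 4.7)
My plan is to observe that the corollary is essentially a direct application of the Contraction Lemma observation made in the paragraph immediately preceding the statement, once it is noted that repellers and attractors are precisely the leaves of $D$ whose images under the rule $P$ lie, respectively, strictly above and strictly below themselves in $\mathcal{T}_2$. Both parts follow the same recipe: identify a Cantor set on which $u$ (or $u^{-1}$) is an honest affine contraction with image strictly inside the original set, then invoke the Contraction Lemma for uniqueness of a fixed point.

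For a repeller $r_X$ of a component $X$ of $D\backslash R$, Lemma \ref{noOrbitsProperties}(1) says $r_X$ maps to the root of $X$, which is a proper ancestor of $r_X$ in $\mathcal{T}_2$ (because $r_X$ lies strictly below that root inside the descending subtree $X$). Thus $\mathfrak{C}_{r_X}\subsetneq u(\mathfrak{C}_{r_X})=\mathfrak{C}_{\mathrm{root}(X)}$, and by Lemma \ref{noOrbitsProperties}(3), $u$ restricted to $\mathfrak{C}_{r_X}$ is an orientation-preserving affine expansion. Hence $u^{-1}$ restricted to $\mathfrak{C}_{\mathrm{root}(X)}$ is an affine contraction sending the Cantor set $\mathfrak{C}_{\mathrm{root}(X)}$ strictly inside $\mathfrak{C}_{r_X}$. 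The Contraction Lemma then produces a unique fixed point of $u^{-1}$, and hence of $u$, in $\mathfrak{C}_{\mathrm{root}(X)}$; because iterates of $u^{-1}$ from any starting point in $\mathfrak{C}_{\mathrm{root}(X)}$ immediately enter $\mathfrak{C}_{r_X}$, this unique fixed point is automatically the desired repelling fixed point $p_X\in\mathfrak{C}_{r_X}$.

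The attractor case is dual, obtained either by replaying the argument with $u$ in place of $u^{-1}$ or by applying the repeller case to $u^{-1}$ using the swapped tree pair $(R,D)$. Explicitly, for an attractor $l_Y$ of a component $Y$ of $R\backslash D$, the root $n_Y$ of $Y$ is a leaf of $D$, and by Lemma \ref{noOrbitsProperties}(2) it maps under $u$ to $l_Y$, which is a proper descendant of $n_Y$. So $u$ restricted to $\mathfrak{C}_{n_Y}$ is an affine contraction by Lemma \ref{noOrbitsProperties}(3), with image $\mathfrak{C}_{l_Y}\subsetneq \mathfrak{C}_{n_Y}$, and the Contraction Lemma delivers a unique fixed point of $u$ inside $\mathfrak{C}_{n_Y}$; since $u(\mathfrak{C}_{n_Y})=\mathfrak{C}_{l_Y}$, this fixed point necessarily lies in $\mathfrak{C}_{l_Y}$, giving the unique attracting fixed point beneath $l_Y$.

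There is no serious obstacle: the entire content is packaging the contraction observation quoted in the text as a statement about revealing pairs, with Lemma \ref{noOrbitsProperties} providing precisely the affine-with-slope-not-one hypothesis needed to legitimately invoke the Contraction Lemma on each Cantor set in question.
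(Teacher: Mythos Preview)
Your proposal is correct and follows exactly the approach the paper intends: the corollary is stated without a separate proof, relying on the paragraph just before it (the Contraction Lemma observation) together with Lemma~\ref{noOrbitsProperties}, and you have simply spelled out those details carefully.
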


The diagram below illustrates a likely tree pair for such an element
$u$.  This particular tree-pair indicates that the element $u$ has one
repelling fixed point (under 0010) and two attracting fixed points (under 10 and 11 respectively).

\begin{center}
\includegraphics[height=150pt,width=250 pt]{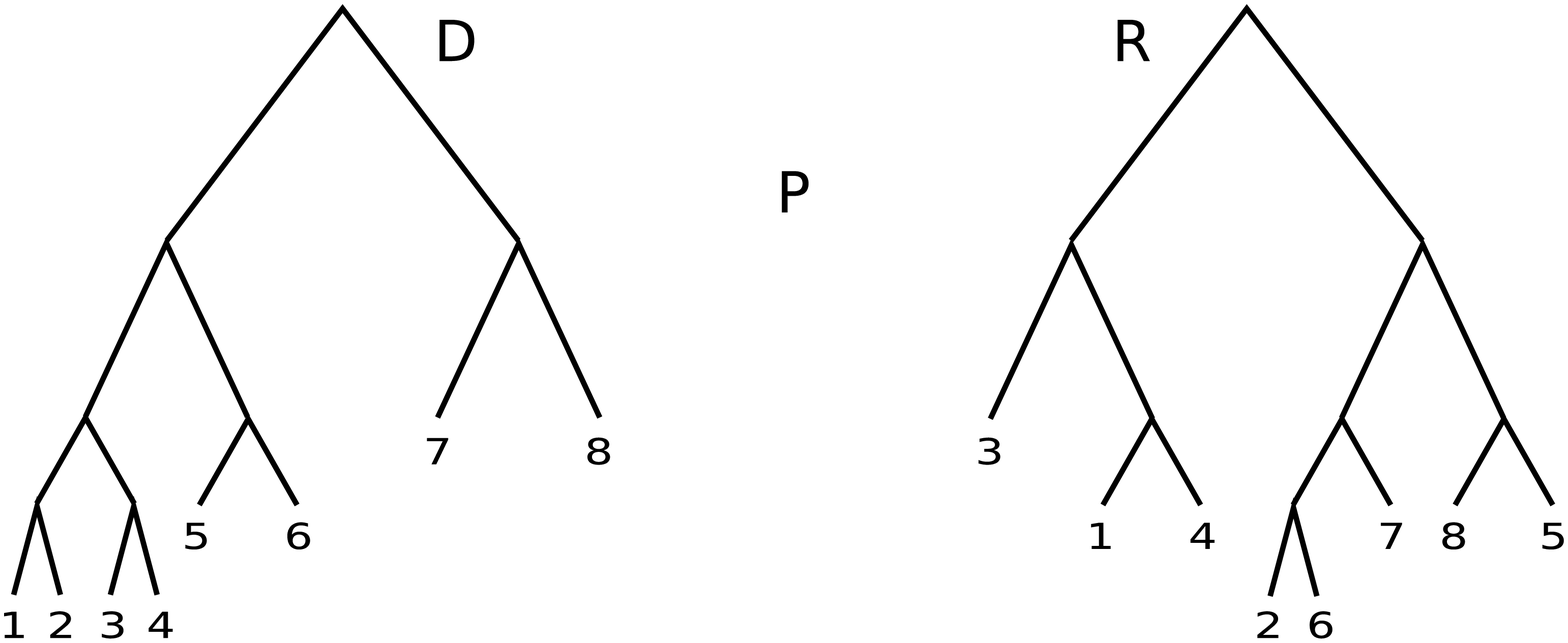}
\end{center}

We call the repelling and attracting fixed points under the action of
$u$ the \emph{important points of $u$}, and denote the set of such as
$I(u)$.

Now, throughout the remainder of this paper, if we discuss the
important points of an element $w$ of $V$, it is to be understood that
$w$ does not admit finite non-trivial orbits in its action on
$\mathfrak{C}$.  In this case, given a revealing pair $Q = (S,T)$
representing $w$, the Cantor Set underlying each root of a component
of $S\backslash T$ represents a \emph{repelling basin for $w$}, and
the Cantor Set underlying a root of a component of $T\backslash S$
represents an \emph{attracting basin for $w$}.

Let $\left\{E_i\right\}_{i = 1}^n$ represent the components of
$D\backslash R$ and let $\left\{F_j\right\}_{j = 1}^m$ represent the
components of $R\backslash D$.  For each $i$, $E_i$ contains a
repelling leaf, as defined above, and all other leaves of $E_i$ are
called \emph{sources}.  Likewise, for each component $F_j$, there is
an attracting leaf, as defined above, and all other leaves are called
\emph{sinks}.  For each source leaf $s_0$, there is a path $s_0 = n_0,
n_1,\ldots, n_t = s_k$ through neutral leaves $n_1, \ldots, n_{t-1}$
of $C$, and then visiting a sink $s_k$, so that $u^p$ will throw the
Cantor Set underlying $s_0= n_0$ onto the Cantor Set underlying $n_p$
for all indices $0\leq p\leq t$.  We call this path the
\emph{source-sink chain $s_0-s_k$ for $P$}.  We can now make a
bi-partite graph, whose vertices are labelled by the repelling and
attracting basins, and whose edges correspond with and are labelled by
source-sink chains connecting repelling basins to attracting basins.
We call this graph the \emph{flow graph for $P$}.  (In the presence of
finite non-trivial orbits, the flow graph has other information
appended to it as in \cite{REU2007, Barker}.  Here, we only
define the portions of the standard flow graph required to support the
discussion in the remainder of this note.)

For our example above, the flow graph is as diagrammed.

\begin{center}
\includegraphics[height=125pt,width=300 pt]{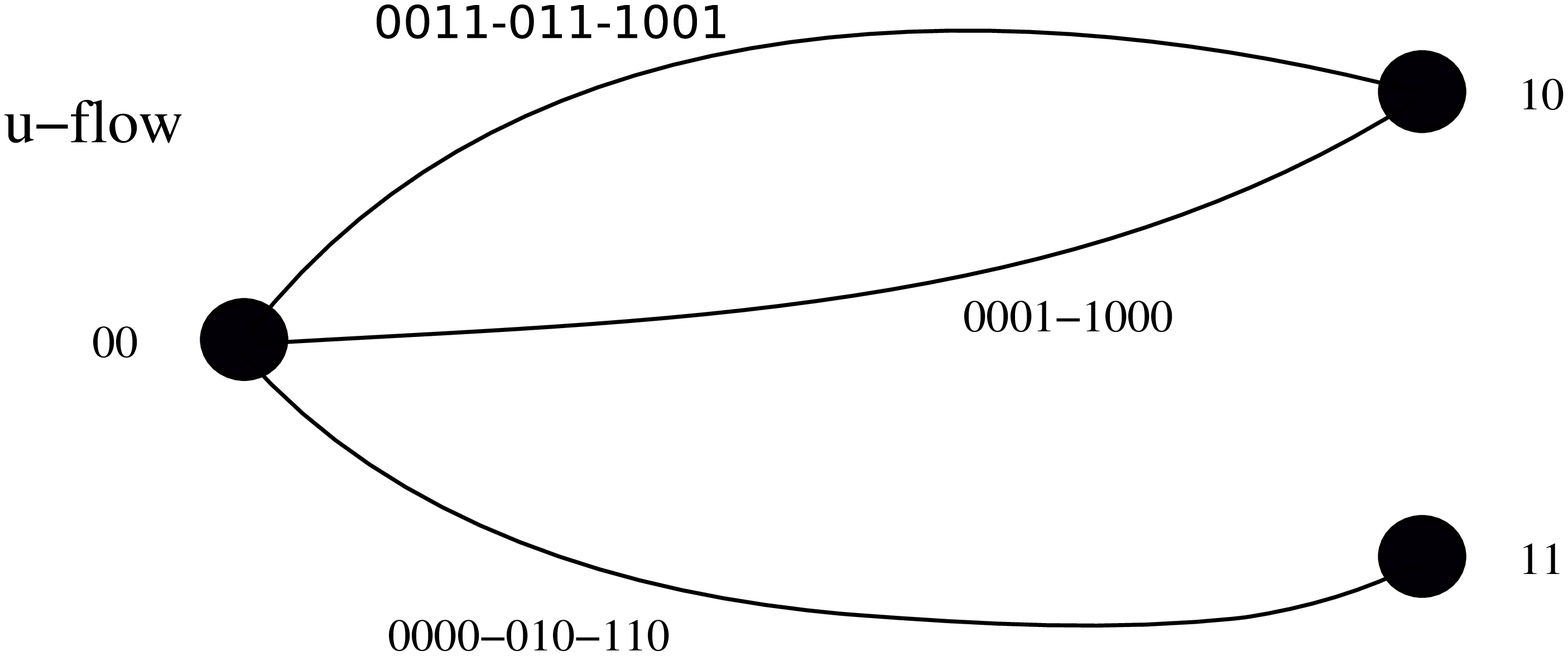}
\psfrag{u-flow}{$u$-flow}
\end{center}

Let $X$ now represent a connected component of the flow graph for $P$.
We can form a set, the \emph{Cantor Set underlying $X$}, by taking the
union of the Cantor Sets underlying the attracting and repelling
basins of $X$, and underlying the neutral leaf nodes occuring in the
source-sink chain labels of the edges of $X$.  This union is
immediately independent of the revealing pair representing $u$, so
that we will also call it a \emph{component of support of $u$}.  We
will define the \emph{component support of $u$}, denoted $\csupp{u}$, as the
union of the components of support of $u$.  We note that $\csupp{u}$
actually is the topological closure of the support of $u$, since it
consists of the support of $u$ together with the important points of
$u$.

In particular, we have a useful lemma, which is easy to see if the
above is understood.

\begin{lemma}
\label{commonImportantPoints}

Let $p\in\mathfrak{C}$ and $g$, $h\in V$ so that they admit no finite
non-trivial orbits and with $p\in I(g)\cap I(h)$.  Then, there
is a node $n$ in $\mathcal{T}_2$ so that 
\begin{enumerate}
\item $p$ underlies $n$, 
\item $\mathfrak{C}_n\subset \csupp{g}\cap\csupp{h}$, and
\item the commutator $[g,h]$ acts as the identity over
  $\mathfrak{C}_n$.
\end{enumerate}
\end{lemma}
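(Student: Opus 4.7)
The plan is to exploit the local affine structure of $g$ and $h$ at their common important fixed point $p$ and show that the germs of $g$ and $h$ at $p$ lie in an abelian subgroup of germs, forcing the commutator $[g,h]$ to act trivially in a neighborhood of $p$. First I would fix revealing pair representatives $(D_g,R_g)$ and $(D_h,R_h)$ for $g$ and $h$. Since $p\in I(g)\cap I(h)$ and neither element admits non-trivial finite orbits, Corollary \ref{fixedPointCorollary} guarantees that $p$ is either an attracting or a repelling fixed point of each, and so lies in the interior of an attracting or repelling basin for each; these basins are contained in the respective component supports. Choosing a node $n_0$ with $p$ underlying $n_0$ and $\mathfrak{C}_{n_0}$ contained in both relevant basins immediately gives $\mathfrak{C}_{n_0}\subset \csupp{g}\cap\csupp{h}$, handling item (2) for any node $n$ below $n_0$.

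Next I would identify a node $N$ below $n_0$ with $p$ underlying $N$ such that both $g$ and $h$ restrict to orientation-preserving affine bijections on $\mathfrak{C}_N$. This is possible because, by Lemma \ref{noOrbitsProperties}(3) together with the revealing pair structure, $g$ acts affinely on some node-neighborhood of $p$ (either the attracting basin, or the repeller's Cantor set, depending on type), and hence on $\mathfrak{C}_n$ for any deeper node $n$; similarly for $h$. Writing $p=b_1b_2b_3\ldots$, any orientation-preserving affine bijection from $\mathfrak{C}_{b_1\ldots b_r}$ to its image that fixes $p$ must take the form $b_1\ldots b_r x\mapsto b_1\ldots b_s x$ for some integer $s$, because the image must itself be a node-neighborhood of $p$ and hence of the form $\mathfrak{C}_{b_1\ldots b_s}$. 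The germ at $p$ of such a map is therefore encoded by the integer $s-r$, and germ composition corresponds to addition of these integers; consequently the affine germs at $p$ fixing $p$ form an abelian group. Applying the homomorphism from the stabilizer of $p$ in $V$ to this germ group, the germs of $g$ and $h$ commute, so the germ of $[g,h]$ at $p$ is trivial. Hence $[g,h]$ is the identity on some $\mathfrak{C}_n$ with $p$ underlying $n$, and choosing such $n$ below $N$ establishes items (1) and (3).

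The main obstacle is bookkeeping when $p$ is attracting for one of $g,h$ and repelling for the other, so that one map contracts $\mathfrak{C}_N$ while the other expands it. In that regime the intermediate images of $\mathfrak{C}_n$ under $g^{\pm 1}$ and $h^{\pm 1}$ appearing in the expansion of $[g,h]$ need not all stay inside $\mathfrak{C}_N$, so one must choose $N$ deep enough --- and then $n$ deeper still --- that the germ equality translates to a pointwise equality $[g,h]|_{\mathfrak{C}_n}=\mathrm{id}$. This is handled by the standard continuity argument for germs: since $g$ and $h$ fix $p$ and are continuous, sufficiently small node-neighborhoods of $p$ are carried by $g^{\pm 1}$ and $h^{\pm 1}$ into regions where the affine rules still apply, after which the abelian shift-integer calculation proceeds directly.
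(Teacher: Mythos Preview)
Your proposal is correct. The paper does not give a detailed proof of this lemma, stating only that it is ``easy to see if the above is understood''; your argument via the local affine structure at $p$ (Lemma~\ref{noOrbitsProperties}(3)) and the observation that prefix-replacement maps of $\mathfrak{C}$ fixing a common point have commuting germs is precisely the natural elaboration of what the paper intends, including the care you take in choosing $n$ deep enough to handle the mixed attracting/repelling case.
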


We now give three background lemmas from \cite{REU2007}, including
their proofs for completeness.

\begin{lemma}
\label{importantPointPreservation}
Let $g$, $h\in V$ so that they admit no finite non-trivial orbits, and
so that $g$ and $h$ commute. We then have I($g$)$\cap$I($h$) =
I($g$)$\cap \csupp{h} =$ I($h$)$\cap\csupp{g}$.
\end{lemma}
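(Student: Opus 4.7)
The plan is to prove the three-way equality by establishing the non-trivial containments. The two inclusions $I(g)\cap I(h) \subseteq I(g)\cap\csupp{h}$ and $I(g)\cap I(h) \subseteq I(h)\cap\csupp{g}$ are immediate, since $I(u) \subseteq \csupp{u}$ for any such $u$ (as noted right after the definition of component support). All content is therefore in the reverse inclusions, and since the hypotheses are symmetric in $g$ and $h$, it suffices to show $I(g)\cap\csupp{h} \subseteq I(h)$.

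Fix $p \in I(g)\cap\csupp{h}$ and suppose, toward contradiction, that $p \notin I(h)$. First I would dispose of the subcase $ph = p$: then $p$ is a fixed point of $h$ that is not important, so Lemma \ref{noOrbitsProperties}(4) applied to $h$ places $p$ under a neutral leaf of a revealing pair for $h$ on which $h$ acts as the identity; this supplies a neighborhood of $p$ disjoint from $\supp{h}$, contradicting $p \in \overline{\supp{h}} = \csupp{h}$. Hence $ph \neq p$.

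The key dynamical step is to show that $h$ carries $I(g)$ into itself, and this is where commutation enters. If $p$ is, say, an attracting fixed point of $g$, take a neighborhood $U$ of $p$ on which $g$ acts as an affine contraction toward $p$ (using Lemma \ref{noOrbitsProperties}(3)). For any $x \in U$, commutation and continuity give $(xh) g^n = (x g^n)h \to ph$, so $ph$ is an attracting fixed point of $g$ in the same dynamical sense; by Corollary \ref{fixedPointCorollary} it must then be the unique such point in its attracting basin, so $ph \in I(g)$. The same reasoning applied to the powers $h^k$ yields $ph^k \in I(g)$ for every $k \in \Z$.

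The contradiction is now combinatorial: since $h$ has no non-trivial finite orbits and $ph \neq p$, the orbit $\{ph^k\}_{k\in\Z}$ is infinite, whereas $I(g)$ is finite (one important point per component of $D_g\setminus R_g$ and of $R_g\setminus D_g$ in any revealing pair for $g$). The main obstacle is the preservation-of-type step where $h$ is shown to send attractors of $g$ to attractors of $g$; once that is secured the rest of the argument is really just bookkeeping against the finiteness of $I(g)$.
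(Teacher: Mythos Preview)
Your proof is correct and follows essentially the same strategy as the paper: show that the $h$-orbit of $p$ lies in $I(g)$, then contradict the finiteness of $I(g)$. The only difference is in how you establish $h$-invariance of $I(g)$: you argue dynamically that $ph$ inherits the attracting (or repelling) behavior of $p$, whereas the paper simply observes that $g^{h^n}=g$ forces $I(g)h^n=I(g^{h^n})=I(g)$, which gives the invariance in one line without the local analysis.
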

\begin{proof}
We show that if $x\in$ I($g$)$\cap\csupp{h}$, then $x\in $ I($h$),
from which the lemma immediately follows.

If $x$ is not an important point of $h$, then $g$ must have infinitely
many important points, since the full orbit of $x$ under that action
of $h$ consists of important points for the functions $g^{(h^n)}=g$.
\end{proof}

\begin{lemma}
\label{componentSwallowing}
Let $g$, $h\in V$ so that they admit no finite non-trivial orbits, and
so that $g$ and $h$ commute.  Suppose $X$ is a component of support
for $g$, and $Y$ is a component of support for $h$.  If $X\cap Y\ne
\emptyset$, then $X\subset\csupp{h}$ and $Y\subset\csupp{g}$.

\end{lemma}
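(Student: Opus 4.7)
The plan is to reduce Lemma \ref{componentSwallowing} to the claim that the connected flow-graph component $X$ admits no nontrivial partition into clopen $g$-invariant subsets, and then to recognize $X\cap\csupp{h}$ and $X\setminus\csupp{h}$ as precisely such a partition. Since $g$ and $h$ commute, Lemma \ref{throwSupport} gives $\supp{h} = \supp{h^g} = \supp{h}\cdot g$, so $g$ preserves $\supp{h}$ and hence its closure $\csupp{h}$. Because $\csupp{h}$ is a finite union of clopen components of support, it is itself clopen, so $A := X\cap\csupp{h}$ and $B := X\setminus\csupp{h}$ are clopen $g$-invariant sets that partition $X$. The point $p \in X\cap Y \subset A$ shows $A\neq\emptyset$, so the target $X\subset\csupp{h}$ reduces to $B = \emptyset$.

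The crux is to show that every basin and every source-sink chain of $X$ lies wholly in $A$ or wholly in $B$. The no-finite-non-trivial-orbit hypothesis forces each attractor cycle (and each repeller cycle) of the revealing pair of $g$ to have length one; otherwise the attracting/repelling fixed point $\alpha$ would visit the intermediate neutral leaves' pairwise disjoint Cantor sets in succession before returning, producing a non-trivial finite $g$-orbit. So on an attracting basin $B_\alpha = \mathfrak{C}_{n_Y}$ we get $g(B_\alpha) = \mathfrak{C}_{l_Y}\subsetneq B_\alpha$ and $g^n(B_\alpha)\to\{\alpha\}$. If $\alpha\in A$, then for any $x\in B_\alpha$ the iterates $xg^n$ eventually enter the open set $A$, and $g$-invariance of $A$ forces $x\in A$, so $B_\alpha\subset A$. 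The symmetric argument with $g^{-1}$ handles repelling basins. Along a chain $s_0\to n_1\to\cdots\to n_t=s_k$, $g$-invariance of $A$ propagates membership through the entire chain, tying the source's basin and the sink's basin to the same side of the partition.

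Since $X$ is by definition a connected component of the flow graph, every two basins of $X$ are linked by a finite sequence of source-sink chains, so all basins and chains of $X$ lie on a single side of the $A$--$B$ partition. Membership of $p$ in $A$ forces this side to be $A$, so $B=\emptyset$ and $X\subset\csupp{h}$; exchanging the roles of $(g,X)$ and $(h,Y)$ gives $Y\subset\csupp{g}$. I expect the main technical obstacle to be the basin-level step: guaranteeing that $g$ itself (rather than merely some power $g^k$) acts as a genuine topological contraction on each attracting basin relies essentially on the no-finite-non-trivial-orbit hypothesis via the revealing-pair structure, and this is the subtle input that makes the clopen-$g$-invariant partition argument go through.
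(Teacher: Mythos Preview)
Your proof is correct, and the underlying mechanism---$g$-invariance of $\csupp{h}$ from commutativity, convergence of $g$-orbits to important points inside each basin, and connectivity of the flow-graph component---is the same as the paper's. The packaging differs, however. The paper selects a non-important point $x\in X\cap Y$, takes its $g$-limits $x_\pm$, and shows by a direct pointwise commutativity contradiction that $x_\pm\in\csupp{h}$; it then invokes Lemma~\ref{importantPointPreservation} to recognise $x_\pm$ as important points of $h$, and finishes with a terse appeal to connectivity. Your clopen-partition framing $X=A\sqcup B$ with $A=X\cap\csupp{h}$ is more self-contained: you get $g$-invariance of $A$ in one stroke from Lemma~\ref{throwSupport}, you make the basin-by-basin and chain-propagation steps explicit, and you never need Lemma~\ref{importantPointPreservation} at all. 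The paper's route has the incidental payoff of establishing $I(g)\cap X\subset I(h)$ along the way (which is useful downstream), while yours gives a cleaner standalone argument for the lemma as stated.
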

\begin{proof}
Suppose $x\in X\cap Y$.  By Lemma \ref{commonImportantPoints}, we
can choose $x$ so that $x$ is not an important point of $g$ or of $h$.

Let $x_- = \lim_{n\to\infty} xg^{-n}$ and
$x_+ = \lim_{n\to\infty} xg^n$.  It is immediate that these limits
exist and that $x_-,x_+\in$ I($g$).

We further see that both $x_-$ and $x_+$ are important points for $h$
by applying Lemma \ref{importantPointPreservation}.  For example, if
$x_+$ is not in $\csupp{h}$ then there is $n\in N$ with $xg^{n}h =
xg^{n}\neq xhg^{n}$.

Now, as $X$ underlies a connected component $\mathcal{C}$ of the flow
graph of $g$, the set of important points of $g$ in $X$ is actually
contained in the component support of $h$.  Therefore, some
neighborhood of these points in $X$ is actually contained in the
component support of $h$.  It now follows that all of $X$ must be
contained in the component support of $h$, and by symmetry, all of $Y$
must be contained in the component support of $g$.
\end{proof}

The following corollary follows from Lemma \ref{componentSwallowing}.
\begin{corollary}
\label{csuppIntersectionDecomposition}
Let $g$, $h\in V$ so that they admit no finite non-trivial orbits, and
so that $g$ and $h$ commute.  Let $\mathcal{C}_g$ ($\mathcal{C}_h$) be
the components of support of $g$ ($h$) which intersect non-trivially
the components of support of $h$ ($g$) non-trivially.  Under these
circumstances, we have
\[
\csupp{g}\cap \csupp{h} = \cup_{X\in \mathcal{C}_g}X = \cup_{Y\in\mathcal{C}_h}Y.
\]
\end{corollary}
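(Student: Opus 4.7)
The plan is to verify the three-way equality by unwinding the definitions and using Lemma \ref{componentSwallowing} as the key lever. By construction, $\csupp{g}$ is the disjoint union of its components of support, and likewise $\csupp{h}$, so that $\csupp{g}\cap\csupp{h}$ decomposes as a union of sets of the form $X\cap Y$ with $X$ a component of support for $g$ and $Y$ a component of support for $h$. The $\mathcal{C}_g$ and $\mathcal{C}_h$ in the statement are exactly those components which participate in a non-empty such intersection.

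First I would prove the inclusion $\csupp{g}\cap\csupp{h}\subseteq\bigcup_{X\in\mathcal{C}_g}X$. Given $p\in\csupp{g}\cap\csupp{h}$, pick the component of support $X$ for $g$ that contains $p$ and the component of support $Y$ for $h$ that contains $p$. Then $p\in X\cap Y$ is non-empty, so by definition $X\in\mathcal{C}_g$, and the inclusion follows immediately.

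Next I would prove the reverse inclusion $\bigcup_{X\in\mathcal{C}_g}X\subseteq\csupp{g}\cap\csupp{h}$. If $X\in\mathcal{C}_g$, then there is some component of support $Y$ of $h$ with $X\cap Y\neq\emptyset$. Lemma \ref{componentSwallowing} then gives $X\subseteq\csupp{h}$, and of course $X\subseteq\csupp{g}$ since $X$ is a component of support of $g$, establishing the inclusion. This shows $\csupp{g}\cap\csupp{h}=\bigcup_{X\in\mathcal{C}_g}X$; the symmetric argument (interchanging the roles of $g$ and $h$) gives $\csupp{g}\cap\csupp{h}=\bigcup_{Y\in\mathcal{C}_h}Y$.

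There is really no main obstacle here beyond correctly bookkeeping the definitions: the substantive content, namely that a component of support for $g$ which meets a component of support for $h$ is swallowed whole by the latter, has already been packaged in Lemma \ref{componentSwallowing}. The only thing to double-check is that every point of $\csupp{g}$ lies in some component of support (which is true since $\csupp{g}$ equals the union of its components of support by definition, including the important points of $g$ by the discussion preceding Lemma \ref{commonImportantPoints}).
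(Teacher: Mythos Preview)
Your proof is correct and follows exactly the route the paper intends: the paper simply states that the corollary follows from Lemma~\ref{componentSwallowing}, and you have spelled out the straightforward double-inclusion argument that makes this precise. There is nothing to add.
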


We are finally ready to prove the following lemma, which provides a
fundamental tool in our later analysis.

\begin{lemma}
\label{commonComponent}
Let $g$, $h\in V$ so that they admit no finite non-trivial orbits, and
so that $g$ and $h$ commute.  Suppose $X$ is a component of support
for $g$, and $Y$ is a component of support for $h$.  If $X\cap Y\ne
\emptyset$, then $X = Y$.
\end{lemma}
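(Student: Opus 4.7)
The plan is to deduce $X=Y$ by a sandwiching argument: first produce a common important point inside $X\cap Y$, then propagate along the flow graph of $g$ restricted to $X$ to conclude that every important point of $g$ in $X$ lies in $Y$, and finally pass from important points to whole basins. The central technical observation is that although $Y$ need not be $g$-invariant, commutativity forces $g$ to permute the finitely many components of support of $h$, so there is a minimal $k\geq 1$ with $Yg^k=Y$, and symmetrically a minimal $j\geq 1$ with $Xh^j=X$. Because $g$ has no non-trivial finite orbits, any fixed point of $g^k$ is a fixed point of $g$, and by Lemma~\ref{noOrbitsProperties}~(4) the important points of $g^k$ coincide with those of $g$. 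Both $X$ and the clopen set $X\cap Y$ are $g^k$-invariant.

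First, to produce a common important point, I would pick any $x_0\in X\cap Y$ and iterate $g^k$: the orbit is trapped in $X\cap Y$, and (unless $x_0$ is already important, in which case we are done) its forward limit $p^+$ is an attracting fixed point of $g$ lying in $X$. Closedness of $Y$ puts $p^+\in Y$, and by Lemma~\ref{importantPointPreservation} we conclude $p^+\in I(g)\cap I(h)\cap X\cap Y$.

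The core of the proof, and the step I expect to be most delicate, is an induction along the flow graph of $g$ restricted to $X$ (connected by definition of a component of support), showing that every important point of $g$ in $X$ lies in $Y$. For the inductive step, suppose an attracting important point $q\in X\cap Y$ is joined in the flow graph to a repelling important point $q'\in X$ by a source-sink chain $s_0,n_1,\ldots,n_t=s_k$, with $s_0$ in the repelling basin of $q'$ and $s_k$ in the attracting basin of $q$. The Cantor set $\mathfrak{C}_{s_0}$ is carried by $g^{t+r}$ into an ever-shrinking neighborhood of $q$; for $r$ large enough so that $\mathfrak{C}_{s_0}g^{t+r}$ lies in a clopen neighborhood of $q$ contained in $Y$, and also satisfying $k\mid t+r$ so that $g^{t+r}$ preserves $Y$, we obtain $\mathfrak{C}_{s_0}\subseteq Y$. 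Then for any $y\in\mathfrak{C}_{s_0}$ the sequence $yg^{-nk}$ stays in $Y$ and converges to $q'$, forcing $q'\in\overline{Y}=Y$. The orientation-reversed case (with $q$ repelling and $q'$ attracting) is handled symmetrically by backward iteration, so induction along the connected flow graph finishes the step.

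Finally, the same power trick upgrades this to the inclusion $X\subseteq Y$: for $z\in X$ the forward $g$-orbit converges to some $p\in I(g)\cap X\subseteq Y$, so for large $n$ with $k\mid n$, $zg^n$ lies in a clopen neighborhood of $p$ inside $Y$, and $g^n$-invariance of $Y$ yields $z\in Y$. The entirely symmetric argument, swapping $(g,X,k)$ with $(h,Y,j)$, gives $Y\subseteq X$, so $X=Y$.
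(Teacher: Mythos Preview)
Your argument is correct and complete in its logic, but it takes a genuinely different technical route from the paper's proof.

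The paper argues directly: choosing $x\in X\cap Y\setminus I(g)$, it supposes $xh^m\notin X$ for some $m$ and derives a contradiction with commutativity by comparing $xg^Sh^m$ (which lies in a $g$-basin near the attractor $x_+$, hence in $X$) with $xh^mg^S$ (which cannot lie in $X$ since $X$ is $g$-invariant). This shows $X\cap Y$ is $h$-invariant, and then the paper appeals---rather tersely---to flow-graph connectivity and the fact that all orbits limit to important points.

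Your route is more structural: you first observe that $g$ permutes the components of support of $h$, extract a power $g^k$ stabilising $Y$, and then propagate membership in $Y$ explicitly along the flow graph of $g$ over $X$, using $g^k$-invariance of $Y$ at each step. This makes the ``propagation'' step, which the paper leaves as a sketch, fully explicit. On the other hand, your key observation---that $g$ permutes the components of support of $h$---is asserted without proof. It is true (for instance because components of support are exactly the minimal nonempty $h$-invariant clopen subsets of $\csupp{h}$, a restatement of flow-graph connectivity, and $g$ preserves $h$-invariance and $\csupp{h}$), but a sentence of justification would be appropriate since neither this characterisation nor the permutation statement appears elsewhere in the paper.

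Both proofs ultimately rest on the same two ingredients---commutativity and connectivity of flow-graph components---but package them differently: the paper front-loads a slick contradiction and hand-waves the propagation, while you front-load a structural lemma and then make the propagation rigorous.
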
 
\begin{proof}
Suppose $g, h, X, Y$ as in the first two sentences of the statement.

Let $x\in X\cap Y\backslash \Imp{g}$.  Note that such an $x$ exists
and is not an important point for either $g$ or $h$.  Define $x_+ =
\lim_{n\to\infty}xg^n$ as before.

Suppose there is $m\in N$ so that $xh^m\not\in X$.

Since $x_+$ is an important point of $h$ as well as $g$, if $S$ is
large enough, then $x_S = xg^S$ will be close enough to $x_+$ that $m$
applications of $h$ to $x_S$ will result in a point still in a basin
of attraction of $g$ containing $x_+$ (recall such a basin is a Cantor
Set underlying a root of a complimentary component of $R_g\backslash
D_g$ for some representative revealing pair $P_g = (D_g,R_g)$ for
$g$).  In particular, we see that $xh^mg^S$ is not in $X$ while
$xg^Sh^m\in X$.

Our result now follows from the connectivity of the component of the
flow graph of $g$ over $X$, the connectivity of the component of the
flow graph of $h$ over $Y$, the fact that given any important point
$p$ in $X\cap Y$, there is a neighborhood $N_p$ of that point which is
fully contained in both $X$ and $Y$, (so that all orbits in $X$ under
$g$ which enter $N_p$ must be fully contained in $Y$ and all orbits in
$Y$ under $h$ which enter $N_p$ must stay in $X$), and the fact that
every point in $X$ or $Y$ limits to the important points of $g$ and
$h$ under repeated applications of $g$ or $h$.

\end{proof}

The following lemma is reminiscent of a similar result by Brin and
Squier for elements of Thompson's group $F$ (or actually, for
PL$_o$($I$)) in \cite{BSPLR}, and the proof is philosophically the
same (although, the details here are slightly more complicated due to the
presence of extra attractors and repellers).  This lemma combines very
powerfully with the Lemma \ref{commonComponent}.
\begin{lemma}
\label{commonPowersCor}

Suppose that $g$, $h\in V$ so that they admit no finite non-trivial
orbits.  Suppose further that $g$ and $h$ have a common component of
support $X$, and that the actions of $g$ and $h$ commute over $X$.
Then there are non-trivial powers $m$ and $n$ so that $g^m = h^n$ over
$X$.

\end{lemma}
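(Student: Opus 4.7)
The plan is to produce $m, n$ from the affine germs of $g$ and $h$ at a single important point in $X$, and then to propagate the resulting local equality throughout $X$ using the commuting dynamics.

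First, I would fix any $p\in\Imp{g}\cap X$. Since $X\subseteq\csupp{h}$, Lemma \ref{importantPointPreservation} places $p$ in $\Imp{h}$ as well. By Lemma \ref{noOrbitsProperties}(3), each of $g$ and $h$ acts near $p$ as an orientation-preserving affine bijection fixing $p$ with slope $2^a$, $2^b$ respectively, for nonzero integers $a$, $b$. Setting $m=b$ and $n=a$, both $g^m$ and $h^n$ fix $p$ with the common slope $2^{ab}$, so they must agree on a small enough Cantor set $N_p = \mathfrak{C}_{n_p}$ containing $p$ (take $n_p$ deep enough in $\mathcal{T}_2$ to lie beneath the relevant affine piece of each of $g^m$ and $h^n$).

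Now set $\phi := g^m h^{-n}$. Since $g$ and $h$ commute, $\phi$ commutes with each of them, and by construction $\phi$ is the identity on $N_p$. The remaining task is to show that $\phi$ is the identity on all of $X$, whence $g^m = h^n$ on $X$. A useful preliminary observation is that the interior of the fix set of any element of $V$ is clopen in $\mathfrak{C}$: the only orientation-preserving affine self-bijection of the Cantor set underlying a node of $\mathcal{T}_2$ is the identity, so any open set on which $\phi$ acts trivially contains a Cantor set underlying a node, and the ``trivial'' behavior decomposes as a finite union of such. Thus $F := \mathrm{int}(\fix{\phi}) \cap X$ is a nonempty clopen subset of $X$, and the commutativity of $\phi$ with $g$ and with $h$ makes $F$ invariant under $\langle g, h \rangle$.

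The main obstacle is showing $F = X$, i.e., that the $\langle g, h \rangle$-saturation of $N_p$ exhausts $X$. When $X$ has a single repelling basin and a single attracting basin for $g$, this is immediate from the combinatorial structure of the flow graph: forward and backward $g$-iterates of $N_p$ traverse every source-sink chain of $X$ and together fill out $X$. In the general case $X$ may contain several repellers and attractors of $g$, and iterates of $g$ alone do not connect different repelling basins of $g$; here one leverages once more that every important point of $g$ in $X$ is also important for $h$ (Lemma \ref{importantPointPreservation}), using the flow graph of $h$ on $X$ to move $N_p$ between repelling basins of $g$ that $g$-iteration on its own cannot link. Once $F = X$ is established, we conclude $g^m = h^n$ over $X$ and the lemma is proved.
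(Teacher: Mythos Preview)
Your overall approach matches the paper's: choose a shared important point $p$, match the affine germs there to obtain $m,n$, set $\phi=g^mh^{-n}$, and then spread the local identity $\phi|_{N_p}=\mathrm{id}$ across all of $X$ using that $\phi$ commutes with $g$. The only substantive divergence is in how you propose to execute the propagation in the general (multi-basin) case. The paper never uses $h$-dynamics for this step: it argues purely with $g$, via Lemma~\ref{throwSupport} and the connectivity of the flow graph of $g$ over $X$. Concretely, since $\supp{\phi}$ is $g$-invariant, if $\phi$ were nontrivial near some important point adjacent (in the flow graph of $g$) to one where $\phi$ is already trivial, a suitable power of $g$ would carry that support into the region where $\phi$ is the identity, a contradiction; induction along the connected flow graph then gives triviality near every important point, and one more application of $g$-iteration finishes. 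In your language this is precisely the statement that a nonempty clopen $g$-invariant subset of $X$ must be all of $X$ --- the \emph{closedness} of your $F$ is what lets you pass from ``orbits accumulate at the next important point'' to ``$F$ contains a full neighborhood of it'', so no appeal to the flow graph of $h$ is needed. (A small related slip: the $\langle g,h\rangle$-saturation of $N_p$ never literally exhausts $X$, since every important point other than $p$ is fixed by both $g$ and $h$ and hence missed by every translate $N_p\gamma$; what you actually need, and have, is that the closure of the saturation is $X$.)
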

\begin{proof}
Let us suppose $g$ is represented by a revealing pair $P_g =
(D_g,R_g)$, so that given an important point $q$ of $g$, the phrase
``the basin containing $q$'' is well defined (ie., the Cantor Set
underlying the root of the complimentary component of $D_g\backslash
R_g$ or of $R_g\backslash D_g$ which has a repelling or attracting
leaf as a node over $q$).

Fix $p$, an important point of both $g$ and $h$ in $X$.  Since $g$ and
$h$ are affine in a neighborhood $N_p$ of $p$, there are non-trivial
powers $m$ and $n$ so that $g^m=h^n$ on $N_p$.  Now the element
$g^mh^{-n}$ is trivial on $N_p$.  For each $x\in\Imp{g}\cap X$ where
$g^mh^{-n}$ is fixed on some neighborhood of $x$, let $N_x$ be such a
neighborhood, and let $\mathcal{N}$ be the union of these
neighborhoods.

We now have that $\mathcal{N}$ is actually a neighborhood of
$\Imp{g}\cap X = \Imp{h}\cap X$.  Otherwise, there is a pair $r,a\in
\Imp{g}\cap X$ with $a$ an attracting fixed point of $g$ and $r$ a
repelling fixed point of $g$, where one of $\{a,r\}$ is in
$\mathcal{N}$ and the other is not in $\mathcal{N}$, and where there
is a source-sink chain from the basin $B_r$ containing $r$ to the
basin $B_a$ containing $a$ (this follows from the connectivity of the
component of the flow graph of $g$ over $X$).  In the case that $a\in
\mathcal{N}$, there is $x\in \supp{g^mh^{-n}}\cap B_r$ and a positive
power $k$ so that $xg^k\in \mathcal{N}\cap B_a$.  In the case that
$r\in\mathcal{N}$, there is $x\in \supp{g^mh^{-n}}\cap B_a$ so that a
negative power $k$ has $xg^k\in \mathcal{N}\cap B_r$.  In either of
these cases, Lemma \ref{throwSupport} now shows that
$(g^mh^{-n})^{g^k}$ has support where $g^mh^{-n}$ acts as the
identity, which is impossible since $g$ commutes with $g$ and with
$h$.

Now again by Lemma \ref{throwSupport}, $g^mh^{-n}$ cannot have any
support in $X$.  Otherwise, for any $x\in X\cap \supp{g^mh^{-n}}$,
there is a positive power $k$ of $g$ so that $xg^k$ is near to an
attractor of $g$ in $X$, in particular, $k$ can be taken large enough
so that $xg^k\in\mathcal{N}$.  But then, $(g^mh^{-n})^{g^k}$ has
support in $\mathcal{N}$.

\end{proof}

\section{Some free products which do occur in $V$}
In this section we prove Theorem \ref{embedding}.  Our main
constructive tool is the standard Ping-Pong Lemma of Fricke and Klein
\cite{FK}.  

\subsection{Free product recognition}
We give the version of the Ping-Pong Lemma
essentially as it appears in \cite{delaHarpe}.

\begin{lemma}(Ping Pong Lemma)

Let $G$ be a group acting on a set $X$ and let $H_1$, $H_2$ be two subgroups of $G$ such that $|H_1|\geq 3$ and $|H_2|\geq 2$.  Suppose there exist two non-empty subsets $X_1$ and $X_2$ of $X$ such that the following hold:
\begin{itemize}
\item $X_1$ is not contained in $X_2$,
\item for every $h_1\in H_1$, $h_1 \neq 1$ we have $h_1(X_2) \subset X_1$,
\item for every $h_2\in H_2$, $h_2 \neq 1$ we have $h_2(X_1) \subset X_2$,
\end{itemize}
Then the subgroup $H = \langle H_1, H_2\rangle \leq G$ of $G$ generated by $H_1$ and $H_2$ is equal to the free product of $H_1$ and $H_2$: \\
\[
H = H_1 * H_2.
\]
\end{lemma}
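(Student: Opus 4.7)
The plan is to show injectivity of the canonical homomorphism $H_1 * H_2 \to H$ by proving that every nontrivial reduced word $w$ in the free product acts nontrivially on $X$. Such a word is an alternating product of nontrivial letters from $H_1$ and $H_2$, and I organize the argument by whether the first and last letters of $w$ lie in $H_1$ or in $H_2$.

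The core case is $w = b_1 a_1 b_2 \cdots a_{k-1} b_k$, beginning and ending with $H_2$-letters. Using the hypothesis $X_1 \not\subset X_2$, pick a point $x_0 \in X_1 \setminus X_2$. Applying the letters of $w$ to $x_0$ from right to left, the two ping-pong inclusions $h_1(X_2)\subset X_1$ and $h_2(X_1)\subset X_2$ force the intermediate images to alternate between $X_2$ and $X_1$; since the word has odd length with a $b$-letter applied last, $w(x_0)\in X_2$, and since $x_0\notin X_2$ we conclude $w(x_0)\neq x_0$, hence $w\neq 1$ in $G$. If instead $w = a_1 b_1 \cdots a_k$ begins and ends with $H_1$-letters, choose any nontrivial $b\in H_2$, which exists because $|H_2|\geq 2$; then $bwb^{-1}$ is a reduced word of the previous form and so nontrivial, forcing $w\neq 1$.

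The remaining cases are those in which $w$ starts and ends with letters of different types; here I conjugate by an appropriate element of $H_1$ to reduce to the symmetric case just handled. For instance, if $w = a_1 b_1 \cdots a_j b_j$, pick $a\in H_1$ with $a\neq 1$ and $a\neq a_1^{-1}$; such an $a$ exists because $|H_1|\geq 3$. Then $awa^{-1}$ is reduced, alternating, and begins and ends with $H_1$-letters, so the earlier step applies. The symmetric mixed case, in which $w$ starts in $H_2$ and ends in $H_1$ with last letter $a_j$, is handled by picking $a\in H_1$ with $a\neq 1$ and $a\neq a_j$. The main thing to monitor is the bookkeeping: verifying that each conjugation yields a reduced alternating word with nontrivial endpoint letters, and tracking precisely where the asymmetric cardinality hypotheses ($|H_1|\geq 3$ versus $|H_2|\geq 2$) enter — the $H_1$ conjugation must avoid two specific elements, while the $H_2$ conjugation only needs to avoid the identity.
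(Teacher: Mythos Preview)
The paper does not prove this lemma; it is stated without proof and attributed to de~la~Harpe. Your argument is the standard one and is correct: the core case (words beginning and ending in $H_2$) is well-chosen given the asymmetric hypothesis $X_1 \not\subset X_2$, since such words send $X_1$ into $X_2$ and hence move any point of $X_1\setminus X_2$; the reductions of the other three cases by conjugation are handled correctly, and you have tracked accurately where $|H_1|\ge 3$ and $|H_2|\ge 2$ are each used.
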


In order to make use the Ping-Pong Lemma, we first define a set of
subgroups of $V$ which are easy to use as factors in free product
decompositions.

\subsection{Demonstrative groups}
It is now time to define the demonstrative groups mentioned in the
introduction.  We will say a subgroup $G\leq V$ is
\emph{demonstrative} if and only if there is a node $n\in
\mathcal{T}_2$ so that for every non-trivial $g\in G$, $g$ admits a
revealing pair representation $P_g = (D_g,R_g)$ so that $n$ is a
neutral leaf of $P_g$ and so that $n$ is moved to a different node of
$\mathcal{T}_2$ by the action of $g$.  We call any node
$p\in\mathcal{T}_2$ which satisfies the properties of $n$ in the
definition of a demonstrative group $G$ a \emph{demonstration node for
  $G$}.  As in the introduction, we denote the set of demonstrative
subgroups of $V$ by the symbol $\mathcal{D}$.

Recall that given $n$ a node of $\mathcal{T}_2$, we denote by
$\mathfrak{C}_n$ the Cantor Set underlying $n$.  The following is an
easy dynamical fact pertaining to demonstrative groups.

\begin{lemma}
\label{demonstrationNode}
Let $G\leq V$.  If $G$ is a demonstrative group then there is a
node $n$ of $\mathcal{T}_2$ so that for all $g\in G$,
\[
\mathfrak{C}_ng\cap \mathfrak{C}_n = \emptyset.
\]
\end{lemma}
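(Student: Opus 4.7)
The plan is to show that the demonstration node $n$ itself works (at least for non-trivial $g\in G$; the statement is vacuously true or needs to be read that way, since $\mathfrak{C}_n \cdot 1 = \mathfrak{C}_n$). The argument is almost immediate from unpacking the definition of a demonstrative group and recalling how tree-pair representatives act on the Cantor sets underlying leaves.

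First I would fix a demonstration node $n$ for $G$ and take an arbitrary non-trivial $g\in G$. By the definition of demonstrative, $g$ has a revealing-pair representative $P_g=(D_g,R_g)$ in which $n$ is a neutral leaf, and moreover the induced action of $g$ on nodes of $\mathcal{T}_2$ satisfies $ng\neq n$. Set $m := ng$.

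Next I would extract two facts from the revealing-pair data. Since $n$ is a leaf of $D_g$, the rule $P_g$ carries $\mathfrak{C}_n$ affinely and bijectively onto $\mathfrak{C}_m$, so $\mathfrak{C}_n g = \mathfrak{C}_m$ and $m$ is a leaf of $R_g$. At the same time, since $n$ is a \emph{neutral} leaf of $P_g$, $n$ is itself a leaf of $R_g$. Thus $n$ and $m$ are two distinct leaves of the single finite rooted binary subtree $R_g\subset\mathcal{T}_2$.

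Now I would invoke the standard observation about finite subtrees of $\mathcal{T}_2$: two distinct leaves of a common rooted subtree are incomparable nodes (neither lies on the infinite descending path through the other), and incomparable nodes have disjoint underlying Cantor sets. Applying this to $n$ and $m$ yields $\mathfrak{C}_n\cap\mathfrak{C}_m=\emptyset$, and combining with $\mathfrak{C}_n g=\mathfrak{C}_m$ gives
\[
\mathfrak{C}_n g \cap \mathfrak{C}_n \;=\; \mathfrak{C}_m\cap\mathfrak{C}_n \;=\;\emptyset,
\]
which is the desired conclusion. There is really no hard step here; the only point that requires care is to notice that being a \emph{neutral} leaf (as opposed to just a leaf of $D_g$) is exactly what forces the image node $m=ng$ to lie at the same ``level of resolution'' as $n$ in the range tree $R_g$, and it is this symmetry that yields incomparability and hence disjointness.
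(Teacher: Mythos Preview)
Your proposal is correct and follows essentially the same approach as the paper: take the demonstration node $n$, and observe that a non-fixed neutral leaf of a tree pair has its underlying Cantor set mapped entirely off itself. The paper's proof is a one-sentence sketch of exactly this, while you have spelled out the detail (that $n$ and $m=ng$ are distinct leaves of $R_g$, hence incomparable, hence have disjoint underlying Cantor sets); you also correctly flag that the conclusion is meant for non-trivial $g\in G$.
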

\begin{proof}
If $G$ is a demonstrative group, then by taking a demonstration node
for $G$ as our $n$ we will produce the desired result, since the
Cantor Set underlying any non-fixed neutral leaf of a tree pair $P$ is
moved entirely off itself by the element of $V$ represented by $P$.
\end{proof}

We now show that the set of demonstrative groups is closed under some
nice operations.
\begin{lemma}
\label{DClosureProps}
Suppose $G$ and $H$ are demonstrative groups, and $G$ is finite.  Then
\begin{enumerate}
\item every subgroup $K\leq H$ is demonstrative, and
\item there is a demonstrative group $L$ with $K\cong G\times H$.
\end{enumerate}
\end{lemma}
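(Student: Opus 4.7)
Part (1) is immediate from the definitions: if $n$ is a demonstration node for $H$, then every non-trivial $k \in K \leq H$ is in particular non-trivial as an element of $H$, so inherits a revealing pair representation with $n$ as a neutral leaf moved by $k$. Hence $n$ serves as a demonstration node for $K$ as well.

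For part (2) my plan is a ``blow-up'' construction that exploits the finiteness of $G$. Fix a demonstration node $n_H$ for $H$, pick any node $n\in\mathcal{T}_2$, and choose binary-string labels $\{t_g\}_{g\in G}$ so that $\{nt_g\}_{g\in G}$ is the leaf set of some finite subtree of $\mathcal{T}_2$ rooted at $n$. Using the natural ``strip-the-prefix'' identifications $\phi_g:\mathfrak{C}_{nt_g}\to\mathfrak{C}$, I would define two subgroups of $V$: (i) $H':=\{\tilde h: h\in H\}$, where $\tilde h$ acts as the $\phi_g$-copy of $h$ on each piece $\mathfrak{C}_{nt_g}$ and as the identity off $\mathfrak{C}_n$; and (ii) $G':=\{\tilde g: g\in G\}$, where $\tilde g$ permutes the pieces affinely via the left-regular representation $\mathfrak{C}_{nt_a}\mapsto\mathfrak{C}_{nt_{ga}}$ and is the identity off $\mathfrak{C}_n$. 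Each $\tilde h$ and $\tilde g$ admits a finite tree pair, hence lies in $V$; and $h\mapsto\tilde h$, $g\mapsto\tilde g$ are injective homomorphisms. A direct right-action check on $x=nt_a y\in\mathfrak{C}_{nt_a}$ gives $x\tilde g\tilde h = nt_{ga}(yh) = x\tilde h\tilde g$, so $G'$ and $H'$ commute. Moreover $G'\cap H' = \{e\}$ since non-identity elements of $G'$ permute pieces non-trivially while elements of $H'$ fix each piece setwise. Thus $L:=G'H' \cong G\times H$.

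To finish, I would show $L$ is demonstrative with demonstration node $n_L := nt_e n_H$, where $t_e$ labels the identity element of $G$. For any non-trivial $\ell = \tilde g\tilde h\in L$, the direct computation $n_L\cdot\ell = nt_g(n_H\cdot h)$ shows $n_L$ is moved: either $t_g\neq t_e$ (when $g\neq e$) or $n_H\cdot h\neq n_H$ (when $h\neq e$, by the demonstration property of $n_H$). The main obstacle I anticipate is verifying that $n_L$ can be arranged as a neutral leaf of some \emph{revealing} pair for each such $\ell$. The map $\ell$ is affine on $\mathfrak{C}_{n_L}$ (since $\tilde g$ is affine on the piece $\mathfrak{C}_{nt_e}$ and $n_H$ is a neutral leaf of a revealing pair for $h$, so $\tilde h$ is affine at the image node), and analogously $\ell^{-1}$ is affine on $\mathfrak{C}_{n_L}$; therefore $n_L$ can be made a leaf of both the domain and range tree of some tree-pair representation of $\ell$. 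The careful bookkeeping is to show that a refinement adding $n_L$ as a neutral leaf preserves the repeller/attractor structure in the components of $D\setminus R$ and $R\setminus D$, which should follow from the standard refinement techniques for revealing pairs developed in \cite{BrinHigherV} and \cite{Salazar}.
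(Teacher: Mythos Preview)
Your Part~(1) matches the paper exactly. For Part~(2) you take a genuinely different route: you rebuild \emph{both} factors from scratch, realizing $G'$ via the regular representation on $|G|$ pieces $\{\mathfrak{C}_{nt_g}\}_{g\in G}$ and $H'$ as a diagonal copy of $H$ acting identically in each piece. The paper instead keeps the original demonstrative $G$ (with demonstration node $m$), takes the finite $G$-orbit $\mathcal{O}_m$ of $m$, and only replaces $H$ by a copy $H_m$ acting as $H$ beneath each node of $\mathcal{O}_m$. The commutation mechanism is the same in both proofs (a piece-permuting factor against a diagonal factor), so the core idea is shared; your version is more explicit and self-contained, while the paper's retains the original $G$ and thereby feeds directly into the subsequent Remark identifying $mn$ as the product's demonstration node.

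Two small points. First, with the paper's right-action convention your left-regular rule $\mathfrak{C}_{nt_a}\mapsto\mathfrak{C}_{nt_{ga}}$ makes $g\mapsto\tilde g$ an \emph{anti}-homomorphism ($\tilde{g_1}\tilde{g_2}=\widetilde{g_2g_1}$); this is harmless for the isomorphism type of $G'$, but you should switch to $nt_a\mapsto nt_{ag}$ or note the inversion. Second, the revealing-pair obstacle you flag is real but is equally present in the paper's proof: it verifies that $mn$ is a demonstration node for $G$ and for $H_m$ separately but never produces a revealing pair for a mixed product $g\bar h$ with $mn$ as a neutral leaf. Since $\mathfrak{C}_{n_L}$ contains no fixed point of $\ell$ and is mapped affinely off itself by both $\ell$ and $\ell^{-1}$, the refinement you sketch is routine; and in any case only the weaker conclusion of Lemma~\ref{demonstrationNode} is ever used downstream.
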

\begin{proof}
The first point is immediate from the definition of a demonstrative group.

The second point requires a bit more care.  Let $m$ be a demonstration
node for $G$.  For each element $g\in G$, let $P_g = (D_g,R_g)$ be a
revealing pair for $g$ which has $m$ as a neutral leaf.  Since $G$ is
finite, the orbit of $m$ in $\mathcal{T}_2$ under the action of $G$ is
a finite collection $\mathcal{O}_m$ of nodes of $\mathcal{T}_2$.

We note that Lemma \ref{demonstrationNode} guarantees us that $g_1\neq g_2\in G$
implies $\mathfrak{C}_mg_1\cap\mathfrak{C}_mg_2 = \emptyset$.

Let $n$ be the demonstration node for $H$.  We will find $H_m$, a copy
of $H$, which is demonstrative with demonstration node $mn$ (concatenate
the names of the nodes $m$ and $n$ in $\mathcal{T}_2$), where every
element of $H_m$ commutes with every element of $G$, so that $\langle
G,H_m\rangle\cong G\times H$.

We build $H_m$ as follows.  For every element $h\in H$, let $P_h=
(D_h,R_h)$ be a representative tree pair for $h$ which has $n$ as a
neutral leaf, in accord with the definition of $H$ being demonstrative
with demonstration node $n$.  We now build the tree pair
$P_{\overline{h}} = (D_{\overline{h}},R_{\overline{h}})$ for
$\overline{h}$, the element of $H_m$ which will be the image of $h$
under the embedding sending $H$ to $H_m$.

Let $T$ be a finite binary subtree in $\mathcal{T}_2$ with root the
root of $\mathcal{T}_2$, which contains every node in $\mathcal{O}_m$
(there are infinitely many such if $\mathcal{O}_m$ does not form the
set of leaves for a finite binary subtree of $\mathcal{T}_2$ with root
the root of $\mathcal{T}_2$).  Define $D_{\overline{h}}$ to be the
extension of the tree $T$, where we append the tree $D_h$ to each of
the leaves of $T$ in $\mathcal{O}_m$.  Define $R_{\overline{h}}$ to be
  the extension of $T$ we get when we append the tree $R_h$ to each
  leaf of $T$ in $\mathcal{O}_m$.  Use the identity permutation on the
  leaves of $T$ not in $\mathcal{O}_m$, and for a particular node
  $l\in\mathcal{O}_m$, use the corresponding permutation for $h$ on
  the leaves under $l$ in the domain and range trees
  $D_{\overline{h}}$ and $R_{\overline{h}}$.

It is immediate from construction that this produces a set of elements
$H_m$ so that $\langle H_m\rangle \cong H$.

The node $mn$ is demonstrative for $H_m$.  It is also demonstrative
for $G$.  Let $A$ be a tree which has $n$ as a node.  Now, for each
element $g$ of $G$, simply append $A$ to every leaf in the finite
cycle of neutral leaves containing $m$ for the tree pair $P_g$, to
make a new revealing tree pair for $g$ which now has the node $mn$ as
a neutral leaf (extending the permutation in the obvious fashion).

Since $H_m$ acts the same way under every image of $m$ under the action
of $G$, we see that the elements of $G$ and the elements of $H_m$
commute.

\end{proof}

\begin{remark}
\label{ClosureNodeFacts}

We note that from the proof above that if $G$ is a finite
demonstrative group with demonstration node $m$ and $H$ is a
demonstrative group with demonstration node $n$, then
\begin{enumerate}
\item when passing to a subgroup $K\leq G$, we can still retain $m$ as
  a demonstration node for $K$, and
\item one can use the node $mn$ as the demonstration node of the
  demonstrative direct product representative of $G\times H$.
\end{enumerate}
\end{remark}

\begin{lemma}
\label{DContainmentProps}
There are demonstrative embeddings of 
\begin{enumerate}
\item $Q/Z$ in $V$,
\item every non-trivial cyclic group in $V$, and
\item every finite group in $V$.
\end{enumerate} 
In all of these cases, we can find demonstrative embeddings with
demonstration node ``0''.
\end{lemma}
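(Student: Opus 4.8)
The plan is to exhibit explicit demonstrative embeddings for each of the three cases, always arranging the demonstration node to be ``0'', and then to observe that cases (2) and (3) mostly reduce to case (1) together with the closure properties already established in Lemma \ref{DClosureProps}. First I would handle $Q/Z$ directly: I want an element (or family of elements) whose action is supported away from $\mathfrak{C}_0$, so that ``0'' is a neutral leaf of the relevant revealing pairs, yet whose action \emph{moves} the node ``0'' somewhere else. The natural trick is to use the ``other half'' of the Cantor set: build a subgroup acting on $\mathfrak{C}_{1}$ (or on some subtree below ``1'') isomorphic to $Q/Z$, but then conjugate or compose with a fixed element that swaps $\mathfrak{C}_0$ with a clopen piece of $\mathfrak{C}_1$, so that each non-trivial group element genuinely sends the node ``0'' off itself while still admitting a revealing pair in which ``0'' is neutral. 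A clean realization of $Q/Z$ inside $V$ is as the group of ``rotations'' of the Cantor set by dyadic-style torsion elements: for each $n$, a finite-order element cyclically permuting the $2^n$ nodes at level $n$ below some fixed node; the union of all these gives a locally finite, torsion, locally cyclic group, which is $Q/Z$ (or contains it, and one takes the right copy). One checks that each such finite-order element has a revealing pair with all leaves neutral (finite order forces the trivial flow graph), and by placing the construction below the node ``1'' and precomposing with the swap of ``0'' and (say) ``10'', every non-trivial element moves ``0''.

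Next I would do case (2), the cyclic groups. For finite cyclic groups this is immediate once $Q/Z$ is in hand: every finite cyclic group embeds as a (demonstrative, by Lemma \ref{DClosureProps}(1)) subgroup of the demonstrative copy of $Q/Z$, and Remark \ref{ClosureNodeFacts}(1) lets us keep ``0'' as the demonstration node. For the infinite cyclic group $Z$ I need a single infinite-order element $v\in V$ admitting a revealing pair in which ``0'' is a neutral leaf that gets moved; here I would take a hyperbolic-type element supported on $\mathfrak{C}_1$ (one repeller, one attractor, say under ``10'' and ``11'', exactly as in the worked example in the excerpt), and again precompose with a transposition of the clopen sets $\mathfrak{C}_0$ and some $\mathfrak{C}_{1w}$ disjoint from the support, chosen so the resulting element still has no non-trivial finite orbits and still has a revealing pair with ``0'' neutral but displaced. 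One must verify that every \emph{power} of $v$ — not just $v$ itself — also admits such a revealing pair with ``0'' neutral and moved; since all powers of $v$ share the same component support (contained in $\mathfrak{C}_1$) this is routine.

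Finally, case (3), arbitrary finite groups: every finite group $G$ embeds in a symmetric group $S_k$, so it suffices to demonstratively embed each $S_k$ with demonstration node ``0'' and then apply Lemma \ref{DClosureProps}(1) and Remark \ref{ClosureNodeFacts}(1) to pass to $G\leq S_k$. To realize $S_k$ demonstratively: take the $k$ nodes $10,\,110,\,1110,\,\dots$ (or any $k$ disjoint nodes below ``1''), let $S_k$ act by permuting the corresponding clopen sets via affine bijections — a finite-order action, hence every element has a revealing pair with all leaves neutral — and then precompose with a fixed swap of $\mathfrak{C}_0$ against a $k$-fold copy of these clopen pieces so that each non-identity permutation moves the node ``0'' as well. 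Concretely one arranges that ``0'' sits inside the support region after the swap so that any non-trivial $g\in S_k$ displaces it, while outside a bounded subtree $g$ is the identity, giving the required revealing pair with ``0'' neutral-but-moved.

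The main obstacle, I expect, is the bookkeeping around the word \emph{neutral}: a demonstration node must be a neutral leaf of a \emph{revealing} pair for each non-trivial $g$, yet it must also be \emph{moved} by $g$ — these pull in opposite directions, since a neutral leaf of $P_g$ is by definition a leaf of both domain and range trees and so is typically fixed. The resolution (and the delicate point to get right) is that ``neutral leaf of the \emph{common} tree $C=D\cap R$'' allows the leaf to be permuted by $\sigma$; so I must exhibit, for each $g$, a revealing pair whose common tree has ``0'' as a leaf that is sent by the permutation to a different leaf, all while not disturbing the repeller/attractor/source-sink structure that certifies the pair is revealing. Carrying this out uniformly over an infinite family (the $Z$ case and all of $Q/Z$) — in particular checking that the precomposed ``swap'' does not create spurious finite orbits or break the revealing condition — is where the real work lies; the rest is assembling the pieces via the already-proved closure lemma.
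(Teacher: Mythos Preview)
Your overall strategy---build explicit embeddings and then pass to subgroups via Lemma~\ref{DClosureProps}---matches the paper's, but there are two genuine gaps.

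First, your construction of $Q/Z$ is incorrect. Cyclically permuting the $2^n$ nodes at level $n$ (compatibly over all $n$) generates only the Pr\"ufer $2$-group $Z(2^\infty)$, the $2$-primary component of $Q/Z$; this neither equals nor contains $Q/Z$, so the parenthetical ``or contains it, and one takes the right copy'' fails. The paper does not build this embedding from scratch: it observes that the embedding of $Q/Z$ given in Proposition~5.6 of \cite{BKMstructure} is already demonstrative (with demonstration node ``01'' or ``11'') and then conjugates by an element of $V$ to move the demonstration node to ``0''.

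Second, your treatment of $S_k$ does not guarantee that every non-trivial element moves ``0''. Permuting $k$ disjoint clopen sets realises the \emph{natural} action of $S_k$ on $k$ points, which is not free: for $k\geq 3$ there are non-trivial permutations fixing any prescribed point, so no ``swap'' involving $\mathfrak{C}_0$---whether you mean a literal pre-composition (which would not even be a homomorphism) or a conjugation---can force \emph{every} non-trivial element to displace ``0''. The paper instead uses the \emph{regular} representation: take a tree $T$ with $n!$ leaves including ``0'', label the leaves bijectively by elements of $S_n$ with ``0'' labelled by the identity, and represent $\alpha\in S_n$ by the tree pair $(T,T,\sigma_\alpha)$ where $\sigma_\alpha$ sends the leaf labelled $m$ to the leaf labelled $m\alpha$. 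Since $D=R=T$, every leaf is neutral and the pair is trivially revealing; since the regular action is free, every non-trivial $\alpha$ moves every leaf, in particular ``0''. Once you see this (and you essentially do in your final paragraph, where you note that a neutral leaf may be permuted by $\sigma$), the entire ``precompose with a swap'' apparatus becomes unnecessary. For the infinite cyclic case the paper likewise just writes down a single explicit element with ``0'' as a moved neutral leaf of its revealing pair, bypassing your swap-and-verify route.
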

\begin{proof}
The embedding of $Q/Z$ in $V$ as given in Proposition 5.6 of
\cite{BKMstructure} is demonstrative.  In Figure 2 of that article,
either of the quarters ``01'' and ``11'' can serve as demonstration
nodes for the embedded image of $Q/Z$ in $V$.  More generally, the
node for ``$J_{2,1}$'' given in the embedding described in that
article (which node is chosen by the reader), can serve as the
demonstration node for that embedding.  We note that we can conjugate
any choice of any such embedding of $Q/Z$ in $V$ by an element
$\theta$ of $V$ which sends the node chosen for $J_{2,1}$ to the node
``0''.  Our new embedding will have the node ``0'' as a demonstration
node.

It is quite easy to find demonstrative embeddings of non-trivial
cyclic groups in $V$; the element $g$ given by the revealing pair in
the diagram below generates an infinite cyclic group with
demonstration node ``0''.
\begin{center}
\includegraphics[height=150pt,width=300 pt]{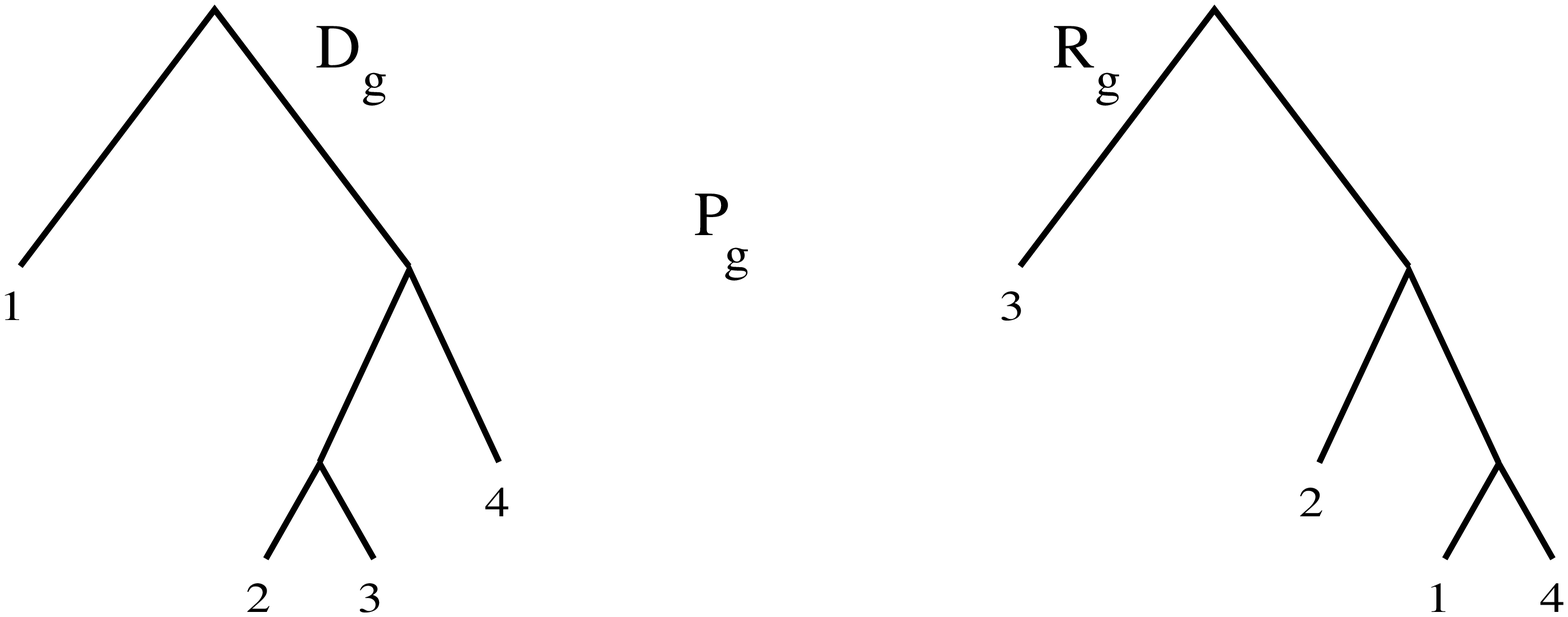}
\end{center}

We now show that for any fixed positive natural number $n$, the
symmetric group on $n$ letters $S_n$ has a demonstrative
representation with demonstration node ``0''.

Let $T$ be a binary tree with $n!$ leaves, which includes ``0'' as a
leaf, with a secondary labeling on leaves using the elements of $S_n$
in some order (with the node ``0'' being labelled with the identity
element).  Represent each element $\alpha$ of $S_n$ by the tree pair
$(T,T)$, and use the permutation which sends the node labelled $m$ to
the node labelled with the group element $m\cdot \alpha$.  It is
immediate that this is a faithful representation of $S_n$, and that
every node is moved by every non-trivial element of $S_n$.  In
particular, the node ``0'' labelled by the identity element in our
initial labelling of $T$ works as well as any other as our
demonstration node.

Passing to a subgroup of a demonstrative group $G$ with demonstration
node $m$ produces a demonstrative group with demonstration node $m$ by
Lemma \ref{DClosureProps} and Remark \ref{ClosureNodeFacts}, so every
finite group has a demonstrative representation with node ``0'' as a
demonstration node.
\end{proof}

In fact, by essentially the argument given above for the case of $Q/Z$,
if we have a demonstrative group $\widehat{G}$, there is a conjugate
version $G$ of $\widehat{G}$ with demonstration node ``0'', we
therefore have the following lemma.
\begin{lemma}
\label{leftRight}
Suppose $G$ is a demonstrative subgroup of $V$.  There is an
isomorphic copy $G_0$ of $G$ with demonstration node ``0'', and
another isomorphic copy $G_1$ of $G$ with demonstration node ``1''.
\end{lemma}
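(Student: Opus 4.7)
The plan is to produce $G_0$ and $G_1$ by conjugating $G$ in $V$, in the spirit of the $\mathbb{Q}/\mathbb{Z}$ discussion in Lemma \ref{DContainmentProps}. Let $\widehat{n}$ denote a demonstration node for $G$.

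For $G_0$, I would pick any $\theta\in V$ admitting a tree-pair representative $(D_\theta, R_\theta, \sigma_\theta)$ in which $\widehat{n}$ appears as a leaf of $D_\theta$, $0$ appears as a leaf of $R_\theta$, and $\sigma_\theta$ pairs these two leaves; the remaining leaves of $D_\theta$ and $R_\theta$ may be filled in freely so the trees have equal leaf counts. I would then set $G_0 := \theta^{-1} G \theta$, which is visibly isomorphic to $G$ as a subgroup of $V$. To check that $0$ is a demonstration node of $G_0$, I would take a non-trivial $g_0 = \theta^{-1} g \theta \in G_0$, fix a revealing pair $(D_g, R_g, \sigma_g)$ for $g$ with $\widehat{n}$ a neutral leaf sent by $\sigma_g$ to some $\widehat{m}\neq \widehat{n}$, and trace the action of $g_0$ on $\mathfrak{C}_0$: the Cantor set $\mathfrak{C}_0$ is carried affinely onto $\mathfrak{C}_{\widehat{n}}$ by $\theta^{-1}$, affinely onto $\mathfrak{C}_{\widehat{m}}$ by $g$, and then affinely onto some $\mathfrak{C}_m$ by $\theta$. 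Since $\widehat{m}\neq \widehat{n}$, we have $m\neq 0$, which yields a tree-pair for $g_0$ in which $0$ is a leaf of both the domain and range trees. A standard refinement of this tree-pair, away from $0$ and $m$, upgrades it to a revealing pair while preserving $0$ as a neutral leaf, proving the demonstrative property.

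For $G_1$, I would apply the same construction with $1$ in place of $0$; equivalently, I would take the involution $\eta\in V$ whose tree-pair has leaves $\{0,1\}$ in both domain and range with the swap permutation, and set $G_1 := \eta G_0 \eta$. Since $\eta$ acts affinely on both $\mathfrak{C}_0$ and $\mathfrak{C}_1$ and swaps them, it transfers the demonstrative structure cleanly from node $0$ to node $1$.

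The main step requiring care is the claim that $\theta$ acts affinely on $\mathfrak{C}_{\widehat{m}}$ for every non-trivial $g\in G$, so that the image $\mathfrak{C}_{\widehat{m}}\theta$ is a single Cantor set $\mathfrak{C}_m$ and not a union of several. This forces the tree-pair of $\theta$ to be subdivided enough that every image node $\widehat{m}$ lies at or below a leaf of $D_\theta$; since the $G$-orbit of $\widehat{n}$ can in principle be infinite, securing this compatibility is the principal obstacle. The resolution is to choose $\theta$ with a tree-pair refined enough to accommodate the relevant image nodes, which can always be arranged by direct inspection of the orbit.
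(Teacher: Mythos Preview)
Your approach---conjugate $G$ by an element $\theta\in V$ carrying the demonstration node $\widehat{n}$ to $0$---is exactly the paper's (the paper's ``proof'' is the single sentence preceding the lemma, invoking the $Q/Z$ discussion). You have gone further than the paper in verifying the details, and you correctly isolate the one technical issue: for $0$ to serve as a neutral leaf for each $g_0=g^\theta$, one needs $\theta$ to act affinely on $\mathfrak{C}_{\widehat{m}}$ for every image node $\widehat{m}=\widehat{n}g$.

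However, your proposed resolution of that issue is backwards. You suggest choosing $\theta$ with a tree-pair ``refined enough to accommodate the relevant image nodes,'' guided ``by direct inspection of the orbit.'' For infinite $G$ the $G$-orbit of $\widehat{n}$ is typically infinite, so no finite tree can be tailored to meet each orbit node individually; and more to the point, refining $D_\theta$ makes $\theta$ \emph{less} affine, not more, since each added caret destroys affineness at the split node. The correct move is to refine as \emph{little} as possible: take $D_\theta$ to be the minimal rooted subtree of $\mathcal{T}_2$ having $\widehat{n}$ as a leaf (its remaining leaves are the siblings along the path from the root to $\widehat{n}$). Then any node $\widehat{m}$ with $\mathfrak{C}_{\widehat{m}}\cap\mathfrak{C}_{\widehat{n}}=\emptyset$---in particular every $\widehat{n}g$ and every $\widehat{n}g^{-1}$ for non-trivial $g\in G$, by Lemma~\ref{demonstrationNode}---automatically lies at or below one of those sibling leaves, so $\theta$ is affine on $\mathfrak{C}_{\widehat{m}}$ with no further work. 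With this single choice of $\theta$, both $g_0$ and $g_0^{-1}$ act affinely on $\mathfrak{C}_0$ for every non-trivial $g_0\in G_0$, and the remainder of your argument goes through.
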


The following is an immediate consequence of Lemma
\ref{DClosureProps}, Lemma \ref{DContainmentProps}, the definition of the
demonstrative groups, and the definition of the class of groups
$\mathcal{A}$.

\begin{corollary}
\label{ADemonstrative}
If $G$ is a group in the class $\mathcal{A}$, then there is a
demonstrative group $K$ so that $G\cong K$.
\end{corollary}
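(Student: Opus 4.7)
The plan is to run an induction on the construction of the class $\mathcal{A}$, using the closure/containment lemmas already in hand as the inductive tools. Concretely, I would define the auxiliary class
\[
\mathcal{A}' = \{\,G : \text{there exists } K \in \mathcal{D} \text{ with } G \cong K\,\}
\]
and argue that $\mathcal{A}' \supseteq \mathcal{A}$. Because $\mathcal{A}$ is defined as the \emph{smallest} class satisfying properties (1)--(4) in the introduction, it suffices to verify that $\mathcal{A}'$ itself contains all finite groups, $\Z$, and $\Q/\Z$, and that $\mathcal{A}'$ is closed under isomorphism, passage to subgroups, and direct product of a finite member with an arbitrary member. The corollary is then an immediate consequence.

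For the base cases, Lemma \ref{DContainmentProps} supplies demonstrative embeddings of $\Q/\Z$, of every non-trivial cyclic group (in particular $\Z$), and of every finite group into $V$, so each of the generating families for $\mathcal{A}$ lies in $\mathcal{A}'$. Closure of $\mathcal{A}'$ under isomorphism is immediate from its definition. Closure of $\mathcal{A}'$ under passage to subgroups is the content of Lemma \ref{DClosureProps}(1): if $G \in \mathcal{A}'$ is witnessed by a demonstrative $K \cong G$ and $H \leq G$, the image of $H$ under the isomorphism is a subgroup of $K$, hence demonstrative. Finally, closure under direct product of a finite member with any member is Lemma \ref{DClosureProps}(2): if $G \in \mathcal{A}'$ is finite (witnessed by a finite demonstrative $K_1$) and $H \in \mathcal{A}'$ (witnessed by a demonstrative $K_2$), then there is a demonstrative $L$ with $L \cong K_1 \times K_2 \cong G \times H$, so $G \times H \in \mathcal{A}'$.

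Having verified these four properties, the minimality of $\mathcal{A}$ gives $\mathcal{A} \subseteq \mathcal{A}'$, which is precisely the statement of the corollary. There is no serious obstacle in this proof: all the work has already been done in establishing Lemmas \ref{DClosureProps} and \ref{DContainmentProps}; the only task here is the standard bookkeeping of checking that the set of groups with demonstrative representatives is closed under exactly the operations used to define $\mathcal{A}$.
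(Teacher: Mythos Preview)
Your proposal is correct and is essentially the same approach as the paper: the paper simply declares the corollary an immediate consequence of Lemmas \ref{DClosureProps} and \ref{DContainmentProps} together with the definitions of $\mathcal{D}$ and $\mathcal{A}$, and what you have written is precisely the routine unpacking of that claim via the minimality of $\mathcal{A}$.
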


We are now ready to prove our first primary result.

\begin{proof}(of Lemma \ref{DInFFPV} and therefore of Theorem \ref{embedding}):
By Corollary \ref{ADemonstrative}, we need only show that given two
demonstrative groups, we can find a copy of their free product in $V$.

In general, this will follow easily from the Ping-Pong Lemma.  We will
need a separate argument for $Z_2*Z_2$.

Let $G$ and $H$ be non-trivial demonstrative groups, not both
isomorphic with $Z_2$.  Let $X_1 = \mathfrak{C}_1$ and $X_2 =
\mathfrak{C}_0$, that is, the right and left halves of the Cantor Set,
respectively.

Let $G_0$ and $H_1$ be the copies of $G$ and $H$ with demonstration
nodes ``0'' and ``1'', respectively (as in Lemma \ref{leftRight}).

We note that any non-trivial element of $G_0$ takes the whole of the
left half of the Cantor Set $\mathfrak{C}_0$ into the right half
$\mathfrak{C}_1$.  Similarly, any non-trivial element of $H_1$ takes the whole
of the right half of the Cantor Set into the left half of the Cantor
Set.  Therefore, by the Ping-Pong lemma, the group $\langle G_0,
H_1\rangle\cong G_0*H_1\cong G*H$.

Now let $G = \langle g\rangle$ and $H = \langle h\rangle$, where $g$
and $h$ are represented by the tree pairs $P_g = (D_g,R_g)$ and $P_h =
(D_h,R_h)$ below.  Both $g$ and $h$ are order two, so $G\cong Z_2\cong
H$.  However, direct calculation shows that $gh$ has infinite order.
In particular, $\langle G,H\rangle \cong Z_2*Z_2$.
\begin{center}
\includegraphics[height=140pt,width=300 pt]{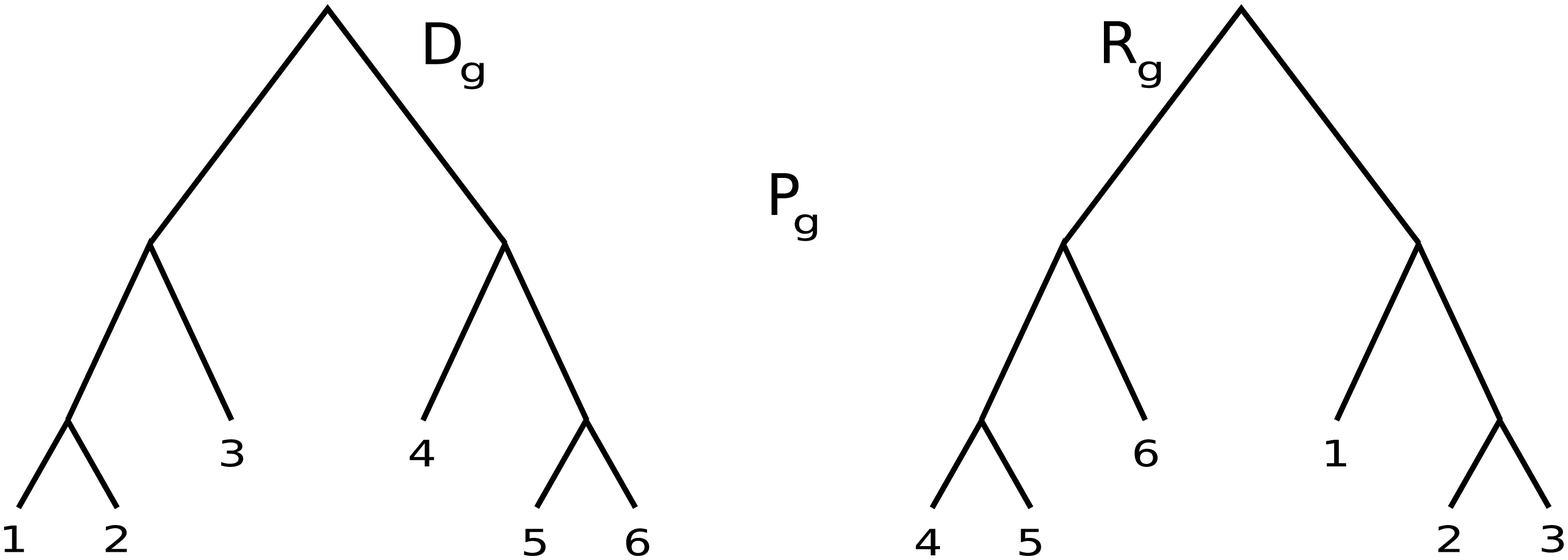}
\includegraphics[height=70pt,width=240 pt]{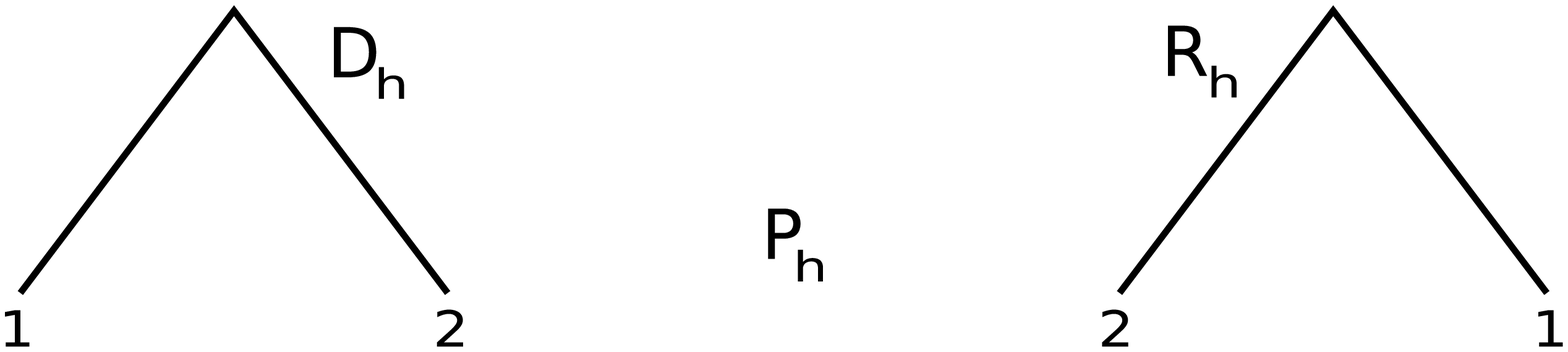}
\end{center}
\end{proof}
\section{Non-embedding results}
We now begin to prove our second primary result, Theorem \ref{nonembedding}.

After we develop some algebraic processes with controlled topological
dynamics, an algorithmic process will demonstrate the non-embedding of
$Z^2*Z$ into $V$.
\subsection{Some commutators in $Z^2 * Z$}
Let $X$ be a non-empty set.  Let $X^{-1}$ be a set disjoint from $X$
in bijective correspondence with $X$.  If $\tau:X \to X^{-1}$ is the
bijection, for each $a\in X$, denote by $a^{-1}$ the element
$\tau{a}$.  If $z \in X^{-1}$, denote by $z^{-1}$ the element
$\tau^{-1}(z)$.  We will call $X$ the alphabet, and for any element
$a\in (X\cup X^{-1})$, we will call $a$ a letter.  We will call any
finite string of letters a word in $X$.  If $w = w_1w_2\cdots w_k$ is
a word in $X$, then we will denote by $w^{-1}$ the word
$w_k^{-1}w_{k-1}^{-1}\cdots w_1^{-1}$.  For any integer $n$, define
the expression $w^n$ as $n$ successive occurrences of the word $w$ if
$n\geq 0$, and $-n$ successive occurrences of the word $w^{-1}$ if $n
< 0$.

Now let $a$, $b$, and $c$ be words in $X$.  We will say that a word
$w$ in $X$ is an \emph{($a,b,c$)-commutator} if there are
minimal integers $n>0$, $x_i$, $y_i$, and $z_i$ with $|x_i| + |y_i| \ne 0$ and
$z_i\ne 0$ for all indices $1\leq i\leq n$, so that
\[
w = [a^{x_1}b^{y_1},[a^{x_2}b^{y_2}, \ldots[a^{x_{n-1}}b^{y_{n-1}},[a^{x_n}b^{y_n}, c^{z_n}]^{z_{n-1}}]^{z_{n-2}}\ldots]^{z_1}].
\] 
%We will refer to the value of $n$ above as the \emph{($a,b,c$)-depth of $w$}.
Note that in this paper, the commutator bracket $[u,v]$ will always
represent the expression $u^{-1}v^{-1}uv$, as before for general
elements of $V$.

The following is immediate from the definition of an ($a,b,c$)-commutator.
\begin{lemma}
\label{abcInducts}
Suppose $X$ is an alphabet with $a$, $b$, and $c$ words in $X$, and
let $t$ be an ($a,b,c$)-commutator.  If $w$ is an
($a,b,t$)-commutator, then $w$ is an ($a,b,c$)-commutator.
\end{lemma}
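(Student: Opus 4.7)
The plan is to prove the lemma by a direct syntactic substitution: insert the nested commutator expression for $t$ into the nested commutator expression for $w$ and read off the combined data to recognize the result as an $(a,b,c)$-commutator.

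First I would unpack the hypotheses and write $w$ as an $(a,b,t)$-commutator of depth $m$,
\[
w = [a^{x'_1}b^{y'_1},[a^{x'_2}b^{y'_2},\ldots[a^{x'_m}b^{y'_m}, t^{z'_m}]^{z'_{m-1}}\ldots]^{z'_1}],
\]
and $t$ as an $(a,b,c)$-commutator of depth $n$,
\[
t = [a^{x_1}b^{y_1},[a^{x_2}b^{y_2},\ldots[a^{x_n}b^{y_n}, c^{z_n}]^{z_{n-1}}\ldots]^{z_1}].
\]
The key observation is that $t$ occurs in the expression for $w$ at exactly one location, namely as the power $t^{z'_m}$ sitting in the innermost slot.

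Next I would substitute the expression for $t$ directly into that slot, so that $t^{z'_m}$ becomes $[a^{x_1}b^{y_1},\ldots]^{z'_m}$, a bracketed expression of depth $n$ raised to the power $z'_m$. The resulting word is a nested bracket of total depth $m+n$. Reading off the data, I would then verify that it matches the definition of an $(a,b,c)$-commutator of depth $m+n$ with
\[
(x^{\mathrm{new}}_i,y^{\mathrm{new}}_i,z^{\mathrm{new}}_i) = \begin{cases}(x'_i,y'_i,z'_i) & 1\le i\le m,\\ (x_{i-m},y_{i-m},z_{i-m}) & m<i\le m+n.\end{cases}
\]
All the nontriviality conditions $|x^{\mathrm{new}}_i|+|y^{\mathrm{new}}_i|\ne 0$ and $z^{\mathrm{new}}_i\ne 0$ are inherited from the corresponding conditions on the original representations of $w$ and $t$.

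The only real bookkeeping subtlety, which I expect to be the main point requiring care rather than a genuine obstacle, is ensuring the exponent $z'_m$ lands in the correct position in the combined scheme. In $w$'s expression it sits on $t$; after substitution, $t$ itself becomes the bracket at depth $m+1$ of the merged expression, and the exponent on the $(m{+}1)$-st bracket in an $(a,b,c)$-commutator of depth $m+n$ is exactly $z^{\mathrm{new}}_m$, so $z^{\mathrm{new}}_m=z'_m$, which is nonzero by hypothesis. Beyond this careful index tracking, the argument is purely structural: the lemma is essentially the observation that $(a,b,\cdot)$-commutator data concatenates cleanly under substitution.
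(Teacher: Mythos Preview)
Your proposal is correct and is exactly the argument the paper has in mind: the paper states the lemma as ``immediate from the definition of an $(a,b,c)$-commutator'' and gives no further proof, so your explicit substitution and index-tracking simply spells out what the paper leaves implicit.
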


In the next lemma, we abuse notation by treating words in the alphabet
$\left\{a,b,c\right\}$ as elements of the group $Z^2*Z$ given by the
presentation $\langle a,b,c\mid [a,b]\rangle$ (recall that by our
definition, words in an alphabet also include ``inverse'' letters).
Given words $w_1$, $w_2$, \ldots,$w_j$, in the alphabet
$\left\{a,b,c\right\}$, we denote by $\langle w_1,w_2,\ldots,
w_j\rangle$ the subgroup of $Z^2*Z$ generated by the elements
represented by these words.  Henceforth, we will freely confuse words
in an alphabet with group elements when it seems unlikely to cause
confusion.

\begin{lemma}

Let $i$, $j$, and $k$ be integers, and define $t=[a^ib^j,c^k]$.  If
$|i|+|j| \neq 0$ and $k\neq 0$, then $\langle a,b,d\rangle$ factors as
$\langle a,b\rangle*\langle d\rangle \cong Z^2 *Z$.

\end{lemma}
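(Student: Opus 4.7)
The plan is to work inside $G = \langle a,b,c\mid [a,b]\rangle \cong \Z^2*\Z$ with factors $A = \langle a,b\rangle$ and $C = \langle c\rangle$, and verify the claimed decomposition via normal form theory for free products. (I read the ``$d$'' in the statement as a typo for $t$.) I first record the normal form of $t$. Expanding $t = [a^ib^j,c^k] = b^{-j}a^{-i}\cdot c^{-k}\cdot a^ib^j\cdot c^k$, and setting $S_1 = b^{-j}a^{-i}\in A$, $S_2 = c^{-k}\in C$, $S_3 = a^ib^j\in A$, $S_4 = c^k\in C$, the hypotheses $|i|+|j|\neq 0$ and $k\neq 0$ force each $S_m\neq 1$ in its factor. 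So $t = S_1S_2S_3S_4$ is a syllable-alternating normal form of length 4 and is cyclically reduced; in particular $t$ has infinite order and $\langle t\rangle\cong \Z$. I also note for later use that $S_3 = S_1^{-1}$, $S_4 = S_2^{-1}$, and $S_1^2, S_3^2\neq 1$ in $A$ (again using $|i|+|j|\neq 0$).

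Next I take an arbitrary reduced word in the free product $A*\langle t\rangle$,
\[
w = g_0\,t^{n_1}g_1\,t^{n_2}\cdots t^{n_s}g_s,
\]
with $s\geq 1$, each $n_i\neq 0$, and each interior $g_i$ (for $1\leq i\leq s-1$) non-trivial in $A$ (the case $s=0$ is trivial). I substitute the explicit normal forms of each $t^{n_i}$ (namely $(S_1S_2S_3S_4)^{n_i}$ for $n_i>0$ and $(S_2S_1S_4S_3)^{|n_i|}$ for $n_i<0$) into $w$ and fully reduce by combining adjacent same-factor syllables. The goal is to show the resulting reduced word in $G$ is non-empty.

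The heart of the argument is a local case analysis at each junction $t^{n_i}g_i t^{n_{i+1}}$, bounding the total cancellation by $5$ syllables. The four sign patterns $(\epsilon_i,\epsilon_{i+1})\in \{+,-\}^2$ give: in $(+,-)$ the three central syllables $S_4, g_i, S_2$ are already alternating (loss $0$); in $(-,+)$ the three central $A$-syllables collapse via $S_3\cdot g_i\cdot S_1 = g_i$ (loss $2$, using $g_i\neq 1$); in $(+,+)$ the two central $A$-syllables combine as $g_i\cdot S_1$, which is trivial iff $g_i = S_3$, and in that case a cascade
\[
\cdots S_3S_4\cdot S_3\cdot S_1S_2\cdots \;\to\; \cdots S_3S_4S_2S_3\cdots\;\to\;\cdots S_3\cdot S_3\cdots\;\to\;\cdots S_3^2\cdots
\]
occurs and terminates exactly because $S_3^2\neq 1$, losing exactly $5$ syllables; the case $(-,-)$ is dual, controlled by $S_1^2\neq 1$ when $g_i = S_3^{-1}$. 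The end junctions $g_0 t^{n_1}$ and $t^{n_s}g_s$ similarly each lose at most $2$ syllables. I expect this step to be the main obstacle, and the crucial termination of the cascade is exactly where the hypothesis $|i|+|j|\neq 0$ is used.

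Finally I count. Initially $w$ has at least $4\sum_{i=1}^s|n_i| + (s-1) + [g_0\neq 1] + [g_s\neq 1]$ syllables, while cancellation removes at most $5(s-1) + 2[g_0\neq 1] + 2[g_s\neq 1]$ of them. Using $\sum|n_i|\geq s$, the final reduced length is at least
\[
5s - 1 + [g_0\neq 1] + [g_s\neq 1] \;-\; \bigl(5(s-1) + 2[g_0\neq 1] + 2[g_s\neq 1]\bigr) \;=\; 4 - [g_0\neq 1] - [g_s\neq 1] \;\geq\; 2,
\]
so $w$ has non-trivial normal form in $G$. Thus the natural surjection $A*\langle t\rangle \twoheadrightarrow \langle a,b,t\rangle$ has trivial kernel and is an isomorphism, giving $\langle a,b,t\rangle \cong \Z^2*\Z$ as claimed.
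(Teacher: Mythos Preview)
Your proof is correct and follows the same overall strategy as the paper---both arguments work directly with normal forms in the free product $\langle a,b\rangle * \langle c\rangle$ and show that any nontrivial alternating word in $\langle a,b\rangle * \langle t\rangle$ reduces to a nontrivial normal form in $\Z^2*\Z$. The tactical execution differs: the paper proceeds by induction on the number of $t$-syllables, maintaining the invariant that the reduced word always ends in one of two explicit suffix patterns (their forms~(**)), whereas you do a global syllable count with a per-junction cancellation bound. Both arguments hinge on exactly the same phenomenon---that the worst-case cascade at a junction terminates because a nontrivial power of $a^ib^j$ survives (your $S_3^2\neq 1$; their growing exponent $f$ in $a^{fi}b^{fj}$)---which is precisely where the hypothesis $|i|+|j|\neq 0$ enters. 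Your counting version is somewhat more conceptual and avoids the four-way case split on suffix shape; the paper's version is more explicit about the resulting normal form. One point you might tighten in a full write-up: the additivity of the per-junction losses (i.e., that cascades from adjacent junctions cannot overlap destructively) deserves a sentence, though it follows readily from the fact that each $t^{\pm 1}$ contributes four syllables and each cascade penetrates at most two syllables into each neighboring $t$-block, always leaving a surviving nontrivial $A$-syllable.
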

\begin{proof}
Let
\[
w = A_0T_0A_1T_1\ldots A_nT_n
\]
where $A_p\in \langle a,b\rangle$ and $T_p\in\langle t\rangle$ for all
valid indices $p$, where $X_p\neq 1$ for $X\in \{A,T\}$ except
possibly for $A_0$ or $T_n$, and where if $n = 0$, one of $A_0$, $T_0$
is non-trivial in $\langle a,b\rangle$ and $\langle t\rangle$
respectively.  We say $w$ is given in form (*).

We will have our lemma if we can show that $w$ does not represent a
trivial element in $Z^2*Z$.

We proceed by induction on $n$.  We will use the phrase
\emph{resultant form} for any expression written as
\[
\prod_{p = 0}^m a^{x_p}b^{y_p}c^{z_p}
\]
where $m$ is an integer with $0\leq m$, $|x_p|+|y_p|\neq 0$ if $p>0$,
and where $z_p\neq 0$ if $p<m$.  We note that in all such forms, the
resulting expression cannot represent the trivial element in $Z^2*Z$
unless $m = x_0=y_0=z_0 = 0$.

We shall prove our result by showing that if $w$ is given as in form
(*), where we further have that $T_n\neq 1$, then $w$ will have
resultant expression ending with one of the two forms (**) below
\[
\begin{array}{c}
c^{-k}a^{fi}b^{fj}c^k\\
c^{-k}a^{-fi}b^{-fj}c^ka^ib^j
\end{array}
\]
(where in both forms $f$ is a positive integer).  In both cases $w$ cannot be
trivial in $Z^2*Z$.

The more general result then follows; elements of the form $a^xb^y$ are not
trivial in $Z^2*Z$ if $|x|+|y|\neq 0$, and any element of $Z^2*Z$ with a
resultant expression ending with either of the forms in (**) cannot be
made trivial by a postmultiplication by a string $a^xb^y$ for integer
values of $x$ and $y$.

We now begin our induction.
\begin{enumerate}
\item Suppose $w = A_0T_0$.

In this base case, if $A_0$ is trivial, the resultant form is $t^z$
for some $z\neq 0$. This expression ends with one of the two forms in
the list (**) (the resultant form depends on the sign of $z$).  In
similar fashion, if both $A_0$ and $T_0$ are not trivial in $\langle
a,b\rangle$ and $\langle t\rangle$ respectively, we then have
\[
w = A_0T_0 = a^xb^yt^z = a^xb^y(a^{-i}b^{-j}c^{-k}a^ib^jc^k)^z
\]

If $z<0$, this word admits no simple cancellations in the group $Z^2*Z$,
and is thus effectively in the resultant non-trivial form in $Z^2*Z$
already (formally, we will need to re-arrange the orders of some $a$'s
and $b$'s after expanding the negative power $z$).  In any case, the
word will end in the form $c^{-k}a^{-i}b^{-j}c^ka^ib^j$, which is in
the bottom form of (**).

If $z>0$, then $w$ simplifies as
\[
w = a^{x-i}b^{y-j}c^{-k}a^ib^jc^k(a^{-i}b^{-j}c^{-k}a^ib^jc^k)^{z-1}
\]
which again is a non-trivial resultant form in $Z^2*Z$, even if the
leading $a^{x-i}$ and $b^{y-j}$ terms are trivial.  In particular,
this word ends in the top form of (**).

\item Suppose now that $n$ is some positive integer and we know by
  induction that for any expression of the form
\[
\prod_{p = 0}^m A_pt^{s_p}
\]
(where $n>m\geq 0$, $A_p\neq 1$ for all $p>0$, and $s_p\neq 0$ for all
$p$) the resultant form of our word ends in one of the two forms in
(**).

We now show that $w$ has resultant form in the list (**).

To begin, we note that $w$ can be expressed as
\[
w=rA_nt^{s_n}, 
\]
where $r$ is expressed in resultant form and ends in one of the two
forms in (**) (by our induction hypothesis), where $s_n\neq 0$, and
where $A_n$ is not trivial in $\langle a,b\rangle$.

There are now two cases in our analysis, each of which splits into two
further subcases.

{\it 2.(a) Suppose $r$ ends with the form $c^{-k}a^{fi}b^{fj}c^k$ where $f>0$, $|i|+|j|\neq 0$, and $k\neq 0$}.

If $s_n<0$ we obtain the following
\[
w = \ldots c^{-k}a^{fi}b^{fj}c^k\cdot A_n\cdot c^{-k}a^{-i}b^{-j}c^ka^ib^j\cdot\ldots\cdot c^{-k}a^{-i}b^{-j}c^ka^ib^j 
\]
 where there are $-s_n$ copies of $c^{-k}a^{-i}b^{-j}c^ka^ib^j $ at
 the end of this expression.  In this case there is aboslutely no
 internal cancelling, and the expression as written above is in
 resultant form (recall, $A_n$ is not trivial in $\langle
 a,b\rangle$).  This expression is in the bottom form of (**).

If $s_n>0$ we obtain
\[
w= \ldots c^{-k}a^{fi}b^{fj}{\bf c^kA_n\cdot
a^{-i}b^{-j}c^{-k}}a^ib^jc^k\cdot\ldots\cdot
a^{-i}b^{-j}c^{-k}a^ib^jc^k
\]
which either fails to cancel the $A_n$ expression with $a^{-i}b^{-j}$, in
which case we obtain the top form of (**), or which has $A_n$ cancel
with $a^{-i}b^{-j}$, in which case the bold substring above cancels
so that we obtain the form
\[
\begin{array}{c}
w= \ldots c^{-k}a^{fi}b^{fj}\cdot a^ib^jc^k\cdot\ldots\cdot
a^{-i}b^{-j}c^{-k}a^ib^jc^k\\
= \ldots c^{-k}a^{(f+1)i}b^{(f+1)j}c^k\cdot
a^{-i}b^{-j}c^{-k}a^ib^jc^k\cdot\ldots\cdot a^{-i}b^{-j}c^{-k}a^ib^jc^k
\end{array}
\] 
in which case we again obtain the top form of (**), even in the case
where $s_n=1$.

{\it 2.(b) Suppose instead that $r$ ends in the form
  $c^{-k}a^{-fi}b^{-fj}c^ka^ib^j$ where $f>0$, $|i|+|j|\neq 0$, and
  $k\neq 0$}. 

 If $s_n<0$ we obtain
\[
w = \ldots c^{-k}a^{-fi}b^{-fj}c^ka^ib^j\cdot A_n\cdot
c^{-k}a^{-i}b^{-j}c^ka^ib^j\cdot\ldots\cdot
c^{-k}a^{-i}b^{-j}c^ka^ib^j.
\]
In the above there are $-s_n$ total occurrences of
$c^{-k}a^{-i}b^{-j}c^ka^ib^j$ at the end of the expression.

If $a^ib^j\cdot A_n$ is trivial in $\langle a,b \rangle$, then the
expression reduces to the resultant form
\[
w = \ldots c^{-k}a^{-(f+1)i}b^{-(f+1)j}c^ka^ib^j\cdot\ldots\cdot
c^{-k}a^{-i}b^{-j}c^ka^ib^j
\]
which is in the lower form from (**), even when $s_n = -1$.  If
$a^ib^j\cdot A_n$ is not trivial in $\langle a,b\rangle$, then there
is less reduction, and again we obtain an expression in the lower form
from (**).

If $s_n>0$ we obtain
\[
w = \ldots c^{-k}a^{-fi}b^{-fj}c^k{\bf a^ib^j\cdot A_n\cdot
  a^{-i}b^{-j}}c^{-k}a^ib^jc^k\cdot\ldots\cdot
a^{-i}b^{-j}c^{-k}a^ib^jc^k.
\]
In this expression there are $s_n$ copies of the subexpression
$a^{-i}b^{-j}c^{-k}a^ib^jc^k$ at the end.  The bold subexpression
$a^ib^j\cdot A_n\cdot a^{-i}b^{-j}$ resolves to $A_n$, which is
non-trivial, and there can be no further reductions, so that we have a
form where no `$c$' sub-expressions can cancel, and we obtain an
expression in the upper form of (**).

Thus, in all cases, our expression for $w$ ends in one of the forms
(**).
\end{enumerate}
\end{proof}

The following follows from the previous two lemmas via a simple induction argument.
\begin{lemma}
\label{abcFree}

Let $Z^2 *Z$ be presented as $\langle a,b,c|[a,b]\rangle$.  If $t$ is
an ($a,b,c$)-commutator, then $\langle a,b,t\rangle$ factors as
$\langle a,b\rangle * \langle t \rangle \cong Z^2 * Z$.
\end{lemma}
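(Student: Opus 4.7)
My plan is to induct on the depth $n$ appearing in the given $(a,b,c)$-commutator representation of $t$. The base case $n=1$ is handled directly by the previous lemma, since in that case $t = [a^{x_1}b^{y_1},c^{z_1}]$ with $|x_1|+|y_1|\neq 0$ and $z_1\neq 0$, and that lemma already gives $\langle a,b,t\rangle \cong Z^2*Z$.

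For the inductive step I assume the conclusion for every $(a,b,c)$-commutator of depth strictly less than $n\geq 2$ and write $t = [a^{x_1}b^{y_1},\,s^{z_1}]$, where
\[
s = [a^{x_2}b^{y_2},[a^{x_3}b^{y_3},\ldots[a^{x_n}b^{y_n},c^{z_n}]^{z_{n-1}}\ldots]^{z_2}]
\]
is an $(a,b,c)$-commutator of depth $n-1$. The inductive hypothesis applied to $s$ gives $\langle a,b,s\rangle = \langle a,b\rangle * \langle s\rangle$, so this subgroup is itself an isomorphic copy of $Z^2*Z$ with abstract presentation $\langle a,b,s\mid [a,b]\rangle$, in which $s$ now plays the role that $c$ played originally.

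The final move is simply to reapply the previous lemma, this time inside the subgroup $\langle a,b,s\rangle$, with $s$ substituted for $c$. Since $|x_1|+|y_1|\neq 0$ and $z_1\neq 0$, that lemma yields $\langle a,b,t\rangle = \langle a,b\rangle * \langle t\rangle \cong Z^2*Z$ as a subgroup of $\langle a,b,s\rangle$, and hence as a subgroup of the ambient $Z^2*Z$, which is the claim.

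I do not expect a genuine obstacle in carrying this out: the heavy word-combinatorial work was already done in the proof of the previous lemma, and the present argument is pure bookkeeping. The role of Lemma~\ref{abcInducts} is exactly to license the substitution $c\mapsto s$ at each induction step by certifying that the resulting nested expression is still an $(a,b,c)$-commutator of the form the previous lemma requires. The one small point worth being careful about is verifying that the ``free generator playing the role of $c$'' is literally free inside $\langle a,b,s\rangle$; this is guaranteed by the free-product factorization produced at the previous stage of the induction, so no extra argument is needed.
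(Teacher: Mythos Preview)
Your proposal is correct and follows essentially the same route the paper indicates: a straightforward induction on the depth $n$ of the $(a,b,c)$-commutator, with the unnamed lemma on $[a^ib^j,c^k]$ supplying both the base case and (after transporting along the isomorphism $\langle a,b,s\rangle\cong Z^2*Z$ furnished by the inductive hypothesis) the inductive step. One small expository remark: your argument does not actually need Lemma~\ref{abcInducts} at all, since the inductive step only requires that $\langle a,b,s\rangle$ is abstractly $Z^2*Z$, not that $t$ retains any particular $(a,b,c)$-commutator structure; your description of that lemma's role is therefore a bit off, though this does not affect the validity of the proof.
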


\subsection{$Z^2*Z$ cannot embed in $V$}
We now create an algorithm, whose net effect will be to show that
$Z^2*Z$ does not embed into $V$.  We do this by taking a supposed
embedding, and through a process of ``improvements,'' we show that our
embedded group actually admits torsion elements.

Throughout the remainder of the paper, we will assume $G = \langle
a,b,c\mid [a,b]\rangle\cong Z^2 * Z$, and that $\phi:G\to V$ is an
embedding.  Define $\alpha = a\phi$, $\beta = b\phi$, and $\gamma
= c\phi$.

Each subsubsection which follows carries out an algebraic selection of
a subgroup of $\phi(G)$ which must still be isomorphic to $Z^2*Z$ for
various reasons.  The reader should imagine we have improved our
initial selection of $\phi$ at such times, so that we always enter the
next subsubsection with well defined versions of $\alpha$, $\beta$,
and $\gamma$.  In each case, the selection will be motivated by
``improving'' the dynamics of the interactions of $\alpha$, $\beta$,
and $\gamma$.  The reader will need to note how these dynamics are
improving as we progress through the process.
\subsubsection{Improve $\langle\alpha,\beta,\gamma\rangle$ so it acts without non-trivial finite periodic orbits}

Our first step is use Proposition 10.1 of \cite{BrinHigherV} to pass
to powers of $\alpha$, $\beta$, and $\gamma$ so that none of the
resulting elements admit non-trivial finite orbits.  In this
subsubsection we will also develop some notation for various sets and
quantities which will be important in the remainder.

By Lemma \ref{commonComponent} we see that $\alpha$ and $\beta$
have various components of support, some of which may be common to both
elements.  Each of the remaining components of support of
$\alpha$ or $\beta$ will be disjoint from the support of $\beta$ or
$\alpha$, respectively.  In particular, we can give name to these sets
as follows.  Let $\left\{A_i\right\}_{i = 1}^r$ represent the
components of support of $\alpha$ which are disjoint from the
components of support of $\beta$.  Let $\left\{B_j\right\}_{j = 1}^s$
represent the components of support of $\beta$ which are disjoint from
the components of support of $\alpha$.  Finally, let
$\left\{C_k\right\}_{k = 1}^t$ represent the common components of
support of $\alpha$ and $\beta$.

For each component of support $C_k$, fix non-trivial integers $m_k$
and $n_k$ so that $\alpha^{m_k}\beta^{n_k}$ has trivial action over
$C_k$.  These integers exist by Corollary \ref{commonPowersCor}.

\subsubsection{Modify $\gamma$ so that $I(\gamma)\cap(I(\alpha)\cup I(\beta))=\emptyset$}
In this subsubsection we improve $\phi$ in stages.  We repeatedly
replace $\gamma$ with ($\alpha,\beta,\gamma$)-commutators.  Our goal
is to arrange the components of support of the final version of
$\gamma$ so that the set of important points of $\gamma$ is disjoint
from the sets of important points of both $\alpha$ and $\beta$.  The
resulting group $\langle \alpha, \beta, \gamma\rangle$ must still
factor as $\langle \alpha,\beta\rangle * \langle \gamma\rangle \cong
Z^2*Z$ by Lemma \ref{abcFree}.

Let $S = I(\gamma)\cap(I(\alpha)\cup I(\beta))$.  If $S$ is non-empty,
then define the ($\alpha,\beta,\gamma$)-commutator 
\[
\theta \mathrel{\mathop:}= [\alpha\beta,\gamma].
\]
Otherwise, we already have the goal of the subsubsection and we can
pass to the next subsubsection without modifying $\gamma$.

If $x\in S$, then either $x$ is an important point of $\alpha\beta$,
or $\alpha\beta$ acts as the identity in a neighborhood of $x$
($\alpha$ and $\beta$ may act as local inverses in a neighborhood of
$x$).  In either case, $\theta$ will act as the identity in a
neighborhood of $x$ (either by invoking Lemma
\ref{commonImportantPoints}, or simply, if $\alpha\beta$ acts
trivially near $x$, then the commutator resolves as
$\gamma^{-1}\gamma$ in a neighborhood of $x$).

Improve $\gamma$ by replacing it with $\theta$.

We now should mention a useful lemma.
\begin{lemma}
\label{noRevisit}

If $y$ is an important point of $\alpha$ or $\beta$, and $\gamma$ acts
as the identity in some neighborhood $\mathcal{M}_y$ of $y$, then
any ($\alpha,\beta,\gamma$)-commutator $\tau$ will act as the identity
in some neighborhood $\mathcal{N}_y$ of $y$.
\end{lemma}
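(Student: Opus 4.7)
The plan is to induct on the depth $n$ of the $(\alpha,\beta,\gamma)$-commutator $\tau$. The first (and most delicate) step is to check that $y$ is fixed by both $\alpha$ and $\beta$: by hypothesis $y\in I(\alpha)\cup I(\beta)$, so without loss of generality $y\in I(\alpha)$ and $y\alpha=y$. For $\beta$, either $y\notin\csupp{\beta}$, in which case a clopen neighborhood of $y$ misses $\supp{\beta}$ entirely and $y\beta=y$; or else $y\in\csupp{\beta}$, and Lemma \ref{importantPointPreservation} forces $y\in I(\alpha)\cap I(\beta)$, whence $y\beta=y$. Thus $\alpha^{r}\beta^{s}$ fixes $y$ for all integers $r,s$.

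Next I would isolate the following general dynamical fact: if $u\in V$ fixes $y$ and $\mu\in V$ acts as the identity on some open neighborhood $\mathcal{N}$ of $y$, then for every integer $k$ the commutator $[u,\mu^{k}]$ acts as the identity on the open set $\mathcal{N}':=\mathcal{N}\cap(\mathcal{N}u)$, which contains $y$ since $y=yu\in\mathcal{N}u$. The verification is a one-line trace: for $p\in\mathcal{N}'$, the point $pu^{-1}$ lies in $\mathcal{N}$ (because $p\in\mathcal{N}u$), so $\mu^{-k}$ fixes $pu^{-1}$; applying $u$ returns the point to $p$; and $\mu^{k}$ then fixes $p$ since $p\in\mathcal{N}$. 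If the paper's restricted sense of neighborhood is needed, simply pick any clopen $\mathfrak{C}_{n}\subseteq\mathcal{N}'$ with $y$ underlying $n$.

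With these two ingredients the induction is essentially automatic. For the base case $n=1$, $\tau=[\alpha^{x_{1}}\beta^{y_{1}},\gamma^{z_{1}}]$, and the general claim applies immediately with $u=\alpha^{x_{1}}\beta^{y_{1}}$, $\mu=\gamma$, $k=z_{1}$, and $\mathcal{N}=\mathcal{M}_{y}$. For $n>1$, write $\tau=[\alpha^{x_{1}}\beta^{y_{1}},\sigma^{z_{1}}]$ where, after reindexing, $\sigma$ is an $(\alpha,\beta,\gamma)$-commutator of depth $n-1$; the inductive hypothesis supplies an open neighborhood on which $\sigma$ acts as the identity, and the general claim finishes the argument. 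The genuinely substantive point of the whole proof is the opening observation that $y$ must be fixed by whichever of $\alpha,\beta$ does not \emph{a priori} have $y$ as an important point; without Lemma \ref{importantPointPreservation} the shrinking trick $\mathcal{N}'=\mathcal{N}\cap(\mathcal{N}u)$ could not be applied, and the induction would break down.
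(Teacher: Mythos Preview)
Your argument is correct and is essentially the paper's own proof: both reduce to the single-bracket case $\tau=[\alpha^{p}\beta^{q},\gamma^{z}]$ and then induct, and both obtain the desired neighborhood by intersecting $\mathcal{M}_y$ with its image under $\alpha^{p}\beta^{q}$ (the paper phrases this as choosing a node $n$ below $m$ with $\mathfrak{C}_n\alpha^{-p}\beta^{-q}\subset\mathcal{M}_y$, which is exactly your $\mathcal{N}'=\mathcal{N}\cap\mathcal{N}u$). The one place you are more careful than the paper is in explicitly invoking Lemma~\ref{importantPointPreservation} to see that $y$ is fixed by \emph{both} $\alpha$ and $\beta$; the paper hides this behind the sentence ``It is immediate that such a node $n$ exists,'' but your observation is precisely what makes that immediacy true.
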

\begin{proof}
We show that if $p$, $q$ are integers with $|p|+|q|>0$, $z\neq
0$ is an integer, and $\tau = [\alpha^p\beta^q,\gamma^z]$, then
$y$ has a neighborhood $\mathcal{N}_y$ so that $\tau$ acts as the
identity on $\mathcal{N}_y$.  The general lemma then follows by an
easy induction.

Let $p$, $q$, $z$ and $\tau$ as in the previous paragraph.  We have
\[
\tau = (\gamma^{-1})^{\alpha^p\beta^q}\cdot \gamma.
\] 
In particular, the support of $\tau$ is contained in the union
Supp($\gamma$)$\cup$ Supp($\gamma$)$\alpha^p\beta^q$.  Let
$\mathcal{M}_y$ be a neighborhood of $y$ disjoint from the action of
$\gamma$, and let $m$ be the node of $\mathcal{T}_2$ corresponding to
$\mathcal{M}_y$.  Pass deeply into $\mathcal{T}_2$ beneath the node
$m$ to find a node $n$ which has $y$ underlying $n$, and so that
$\alpha^p\beta^q$ acts affinely over the Cantor Set $\mathcal{N}_y$
underlying $n$, and so that $\mathcal{N}_y\alpha^{-p}\beta^{-q}\subset
\mathcal{M}_y$.  It is immediate that such a node $n$ exists. As
Supp($\gamma$) lies outside of $\mathcal{M}_y$, the action of
$\alpha^p\beta^q$ cannot throw the support of $\gamma$ into
$\mathcal{N}_y$.
\end{proof}

If our new $\gamma$ has new important points in common with the
important points of $\alpha$ or the important points of $\beta$, then
return to the beginning of this subsubsection, observing that by the lemma
we have just proven, any further versions of $\gamma$ that we create
will act as the identity in a neighborhood of the set $S$ which we
defined in the beginning of this section.

This process must stop, since the important points of $\alpha$ and
$\beta$ are finite in number.  The final version $\gamma$ that we have
created has no important points in common with the important points of
$\alpha$ or the important points of $\beta$.

Proceed to the next subsubsection.

\subsubsection{Improve $\gamma$ so that ${\rm Supp}(\gamma)\,\cap\,\left(I(\alpha)\cup I(\beta)\right) = \emptyset$}
%Removing the important points of $\alpha$ and $\beta$ from Supp($\gamma$)}
This process requires a bit more care.

We may suppose immediately that $I(\gamma)\cap (I(\alpha)\cup
I(\beta)) = \emptyset,$ or we could not have entered this subsubsection
in our algorithm.

If Supp($\gamma$) does not contain any of the important points of
$\alpha$ or $\beta$, then proceed to the next subsubsection, otherwise
continue in this subsection.

Suppose $x$ is an important point of $\alpha$ or $\beta$ which is in
the support of $\gamma$.  We must carefully consider two cases,
depending on the dynamics of $\alpha$ near $y = x\gamma^{-1}$.

In the first case, $y$ is disjoint from the support of $\alpha$.

  In this case, define $\theta = [\alpha,\gamma] =
  \alpha^{-1}\gamma^{-1}\alpha\gamma$.  We observe the following.

\[
x\theta = x\alpha\cdot \gamma^{-1}\alpha\gamma =
x\gamma^{-1}\alpha\gamma = y\alpha\gamma = y\gamma = x
\]

So $x$ is fixed by $\theta$.  

If $x$ is an important point of $\alpha$, then unless $y$ is also an
important point of $\alpha$, $x$ will be an important point of
$\theta$ as well; the initial invocation of $\alpha^{-1}$ acts as an
affine map fixing $x$ with slope not equal to one in a neighborhood of
$x$, while the latter invocation of $\alpha$ acts as the identity near
$y$.  If $y$ is an important point of $\alpha$ as well, then $x$ will
be an important point for $\theta$ only if the slope of $\alpha$ in
small neighborhoods of $x$ is not the same as the slope of $\alpha$ in
small neighbrohoods of $y$ (otherwise, $\theta$ will act as the
identity in some neighborhood of $x$).

If $x$ is not an important point of $\alpha$, then $x$ is an important
point of $\beta$ which must be disjoint from the component support of
$\alpha$.  In this case, $\alpha$ acts as the identity in a
neighborhood of $x$. Therefore, $\theta$ will either have $x$ as an
important point (if $y$ is an important point of $\alpha$) or
act as the identity in a neighborhood of $x$.

Suppose instead that $y$ is not disjoint from the support of $\alpha$.
Then either $y$ is in a common component of support $C_k$ of $\alpha$
and $\beta$, or $y$ is in a component of support $A_i$ for $\alpha$
for some index $i$.  In the first case we can use Corollary
\ref{commonPowersCor} to find $p$, $q$ non-trivial integers so that
$\alpha^p\beta^q$ acts trivially over $C_k$, while in the second case,
take $p = 0$ and $q = 1$, so that again $\alpha^p\beta^q$ is trivial
over $A_i$.  In either case, define $\theta =
[\alpha^p\beta^q,\gamma]$, and $\theta$ will have $x$ as an important
point if $x$ is an important point of $\alpha^p\beta^q$, as in the
previous discussion, or $\theta$ will act trivially in a neighborhood
of $x$.

Now replace $\gamma$ by $\theta$.  If $\gamma$ has any important
points in common with the important points of $\alpha$ or $\beta$,
return to the previous subsubsection.  Otherwise, if there are any
further important points of $\alpha$ or $\beta$ in the support of
$\gamma$, then return to the beginning of this subsubsection.
Finally, if none of the important points of $\alpha$ or $\beta$ are in
the component support of $\gamma$, we can proceed to the next
subsubsection.

Note that if we had ${\rm Supp}(\gamma)\,\cap\,\left(I(\alpha)\cup
I(\beta)\right) \ne \emptyset$ at the beginning of this subsubsection,
then the cardinality of that intersection is reduced by at least one
by the process here (no new points of $I(\alpha)\cup I(\beta)$ enter
the support of $\gamma$ under this process by Lemma \ref{noRevisit}).
Further, again by Lemma \ref{noRevisit}, this subsubsection will only
force the algorithm to return to the previous subsubsection a finite
number of times, and the cardinality of ${\rm
  Supp}(\gamma)\,\cap\,\left(I(\alpha)\cup I(\beta)\right)$ does not
increase when we apply the process of the previous section.
Therefore, the algorithm given to this point eventually passes to the
next subsubsection, and at that time none of the important points of
$\alpha$ or of $\beta$ are in the component support of $\gamma$.

\subsubsection{Finding torsion where none can exist}
We can now assume that there is a neighborhood of the important points
of $\alpha$ and $\beta$ so that $\gamma$ acts trivially over this
neighborhood.

In this section, we find an element in
$\langle\alpha,\beta,\gamma\rangle$, which is the $\phi$-image of a
non-trivial element of $Z^2*Z$, but where orbit dynamics show that the
image element must actually either be trivial, or torsion with order
two, three, or six.  Thus, $\phi$ cannot actually embed $Z^2*Z$ into
$V$.

We first note the effects of conjugating $\gamma$ by powers of
$\alpha$ and $\beta$, assuming the details of our current dynamical
situation with respect to important points and supports.  

\begin{lemma} 
\label{gammaPastItself}

Suppose $\delta$, $\epsilon$, and $\theta$ are elements in $V$ with
the properties that
\begin{enumerate}
\item $\delta$ and $\epsilon$ commute, 
\item none of $\delta$, $\epsilon$ or $\theta$ admit non-trivial
  finite orbits in their action on the Cantor Set,
\item the support of $\theta$ is disjoint from a neighborhood of the
  important points of $\delta$ and $\epsilon$.
\end{enumerate}
There are infinitely many pairs of non-zero integers $x$ and $y$ so that \\

Supp($\theta^{\delta^x\epsilon^y}$) $\cap$ Supp($\theta$) $\cap$
(Supp($\delta$) $\cup$ Supp($\epsilon$)) $= \emptyset$.
\end{lemma}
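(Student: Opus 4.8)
The plan is to exploit the fact that all the relevant supports are "eventually affine" near the important points of $\delta$ and $\epsilon$, so that conjugating $\theta$ by a high power $\delta^x\epsilon^y$ pushes $\mathrm{Supp}(\theta)$ deep into the attracting basins of $\delta$ and $\epsilon$ and away from the repelling basins, while simultaneously the hypothesis that $\mathrm{Supp}(\theta)$ avoids a neighborhood of $I(\delta)\cup I(\epsilon)$ keeps the picture under control. Concretely, since $\delta$ and $\epsilon$ commute, Lemma \ref{commonComponent} tells us their components of support either coincide or are disjoint; on a common component $\delta$ and $\epsilon$ share the same flow graph and, by Corollary \ref{commonPowersCor}, $\delta^m=\epsilon^n$ there for suitable $m,n$, so the group $\langle\delta,\epsilon\rangle$ acts on each component essentially like a single cyclic "expansion". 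Fix a revealing pair for each of $\delta$ and $\epsilon$; by hypothesis (3) there is a uniform $\varepsilon$-neighborhood $\mathcal{M}$ of $I(\delta)\cup I(\epsilon)$ disjoint from $\mathrm{Supp}(\theta)$, and a complementary neighborhood structure in which every point of $\mathrm{Supp}(\delta)\cup\mathrm{Supp}(\epsilon)$ either heads, under iteration, into an attracting basin inside $\mathcal{M}$ or, under backward iteration, into a repelling basin inside $\mathcal{M}$.

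The key step is the following claim: for all sufficiently large $x$ and $y$, $\mathrm{Supp}(\theta)\,\delta^x\epsilon^y \subset \mathcal{M}$. Here is why this should work. Decompose $\mathrm{Supp}(\delta)\cup\mathrm{Supp}(\epsilon)$ into finitely many Cantor sets underlying nodes (leaves of a common refinement of the revealing pairs, away from $\mathcal{M}$), and observe that on each such node, a high power $\delta^x\epsilon^y$ acts affinely and carries it into some attracting basin contained in $\mathcal{M}$ — this is exactly the kind of "flow to the attractor" statement used in the proofs of Lemmas \ref{commonComponent} and \ref{commonPowersCor}. One must be a little careful because $\delta$ and $\epsilon$ may have different flow structures on disjoint components, and because a point of $\mathrm{Supp}(\theta)$ could lie simultaneously in a component of $\delta$ and a component of $\epsilon$; but since $x$ and $y$ are chosen independently and can both be taken huge, one can first drive everything inside $\mathrm{Supp}(\delta)$ into $\delta$'s attracting basins via $\delta^x$, and then note $\epsilon^y$ fixes those basins or, if the component is common, merely permutes/expands within them (again staying in $\mathcal{M}$ by the matching-powers statement). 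The finitely many pairs $(x,y)$ that fail form a bounded set, leaving infinitely many good pairs, and we may further insist $x,y\neq 0$.

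Once the claim is in hand, the conclusion is immediate from Lemma \ref{throwSupport}: $\mathrm{Supp}(\theta^{\delta^x\epsilon^y}) = \mathrm{Supp}(\theta)\,\delta^x\epsilon^y \subset \mathcal{M}$, and $\mathcal{M}$ is disjoint from $\mathrm{Supp}(\theta)$ by hypothesis (3), hence the triple intersection in the statement is empty (indeed even the pairwise intersection $\mathrm{Supp}(\theta^{\delta^x\epsilon^y})\cap\mathrm{Supp}(\theta)$ is empty). The main obstacle I expect is the bookkeeping in the claim when $\mathrm{Supp}(\theta)$ straddles both an $\alpha$-type component and a common component, since then one cannot naively "flow under $\delta$ alone"; the fix is to use the common-powers relation $\delta^m=\epsilon^n$ over common components so that the action of $\delta^x\epsilon^y$ there is still an honest expansion toward an attractor in $\mathcal{M}$, and to choose $x$ a large multiple of $m$ (or $y$ a large multiple of $n$) so the two effects do not cancel. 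A secondary subtlety is ensuring that "deep in an attracting basin" genuinely lies inside the prescribed neighborhood $\mathcal{M}$ of $I(\delta)\cup I(\epsilon)$ rather than merely near the attracting \emph{fixed} point; this is handled by shrinking $\mathcal{M}$ at the outset to be a union of Cantor sets underlying nodes strictly inside each attracting and repelling basin, which is possible precisely because $\mathrm{Supp}(\theta)$ avoids a genuine neighborhood of the important points.
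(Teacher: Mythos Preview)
Your overall strategy matches the paper's: use Lemma~\ref{throwSupport} together with hypothesis~(3) to reduce the problem to showing that $\mathrm{Supp}(\theta)\,\delta^x\epsilon^y$ lands in the neighborhood $\mathcal{M}$ of $I(\delta)\cup I(\epsilon)$, and then observe this follows from pushing points toward attractors. Where your argument goes wrong is in the choice of $(x,y)$.

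Your key claim, that $\mathrm{Supp}(\theta)\,\delta^x\epsilon^y\subset\mathcal{M}$ for all sufficiently large $x$ and $y$, is false in general, and the set of bad pairs is not bounded. On a common component $C_k$ you have (by Corollary~\ref{commonPowersCor}) a relation $\delta^{m_k}=\epsilon^{n_k}$, so the restriction of $\langle\delta,\epsilon\rangle$ to $C_k$ is infinite cyclic and the pairs $(x,y)$ for which $\delta^x\epsilon^y$ is trivial on $C_k$ form a rank-one sublattice of $\mathbb{Z}^2$ --- an entire line through the origin, not a bounded set. Along (or near) that line, $\delta^x\epsilon^y$ does essentially nothing on $C_k$, so $\mathrm{Supp}(\theta)\cap C_k$ is not pushed into $\mathcal{M}$ no matter how large $|x|,|y|$ are. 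Your proposed fix of ``first flow by $\delta^x$, then by $\epsilon^y$'' also fails on common components: the important points of $\delta$ and $\epsilon$ there coincide (Lemma~\ref{importantPointPreservation}), but an attractor of $\delta$ may well be a repeller of $\epsilon$, so $\epsilon^y$ can undo what $\delta^x$ achieved. Taking $x$ a multiple of some $m$ does not help when there are several common components with incommensurable ratios $m_k/n_k$.

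The paper handles this in one line by first fixing a \emph{direction}: since there are only finitely many common components, one can choose non-zero integers $i,j$ so that $\delta^i\epsilon^j$ is non-trivial on every common component (avoid the finitely many bad lines in $\mathbb{Z}^2$), hence non-trivial on every component of support of $\delta$ and of $\epsilon$. Then for all large $n$, setting $(x,y)=(ni,nj)$ gives $\delta^x\epsilon^y$ acting as a genuine high power on every component, which pushes $\mathrm{Supp}(\theta)$ into $\mathcal{M}$ as you intended. This yields infinitely many good pairs, which is all the lemma asks for.
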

\begin{proof}
The lemma follows immediately from the observation that there are
non-zero integers $i$ and $j$ so that $\delta^i\epsilon^j$ is
non-trivial over every component of common support of $\delta$ and
$\epsilon$ (since there are only finitely many such components), and
thus over every component of support of $\delta$ and $\epsilon$.  Now
for large enough integers $n$, setting $x =ni$ and $y = nj$ produces
integers so that $\delta^x\epsilon^y$ throws the support of $\theta$
entirely off of itself within the supports of $\delta$ and $\epsilon$
(the resulting support within the support of $\delta$ and $\epsilon$
will be near to the important points of $\delta$ and $\epsilon$).
\end{proof}
Thus we may use conjugation of $\gamma$ by powers of $\alpha$ and
$\beta$ to find non-trivial elements in
$\langle\alpha,\beta,\gamma\rangle$ whose actions \emph{within the
  supports of $\alpha$ and $\beta$} are disjoint from the action of
$\gamma$.

Let $x$ and $y$ be the integers guaranteed by Lemma
\ref{gammaPastItself}, where $\alpha$ plays the role of $\delta$,
$\beta$ plays the role of $\beta$, and $\gamma$ plays the role of
$\theta$.  Define $\omega = [\gamma,\gamma^{\alpha^x\beta^y}]$, noting
in passing that $[c,c^{a^xb^y}]$ is not trivial in $Z^2*Z$.

Note that the component support of $\omega$ is contained in the set
$\Gamma=\overline{{\rm Supp(}\gamma{\rm )}}\cup \overline{{\rm
    Supp(}\gamma^{\alpha^x\beta^y}{\rm )}}$.  One can now demonstrate
by direct calculation that every $x\in\Gamma$ must travel, under the
action of $\langle \omega\rangle$, along an orbit of length either
one, two, or three.  This implies that the order of $\omega$ is one,
two, three, or six, which contradicts the fact that $\omega$ is a
non-trivial element in a torsion free group.  In particular, there can
be no embedding of $Z^2*Z$ in $V$.

The remainder of this subsubsection verifies the calculation mentioned
in the previous paragraph.  The discussion is highly technical. We
first carefully define fifteen disjoint subsets of the potential
support of $\omega$.  Then, we analyze the flow of points in these
sets under the action of $\langle \omega\rangle$.

To assist the reader in tracking this process, the following schematic
provides an informal guide to the arrangements of the fifteen sets in the
context of the actions of $\gamma$, $\theta$, and $\alpha^x\beta^y$.

\begin{center}
\psfrag{Uc}{$U_c$}
\psfrag{Vc}{$V_c$}
\psfrag{Ug}{$U_g$}
\psfrag{Vg}{$V_g$}
\psfrag{Ut}{$U_t$}
\psfrag{Vt}{$V_t$}
\psfrag{Mc}{$M_c$}
\psfrag{Mg}{$M_g$}
\psfrag{Mt}{$M_t$}
\psfrag{axby}{$\alpha^x\beta^y$}
\psfrag{1}{$1$}
\psfrag{2}{$2$}
\includegraphics[height=180pt,width=250 pt]{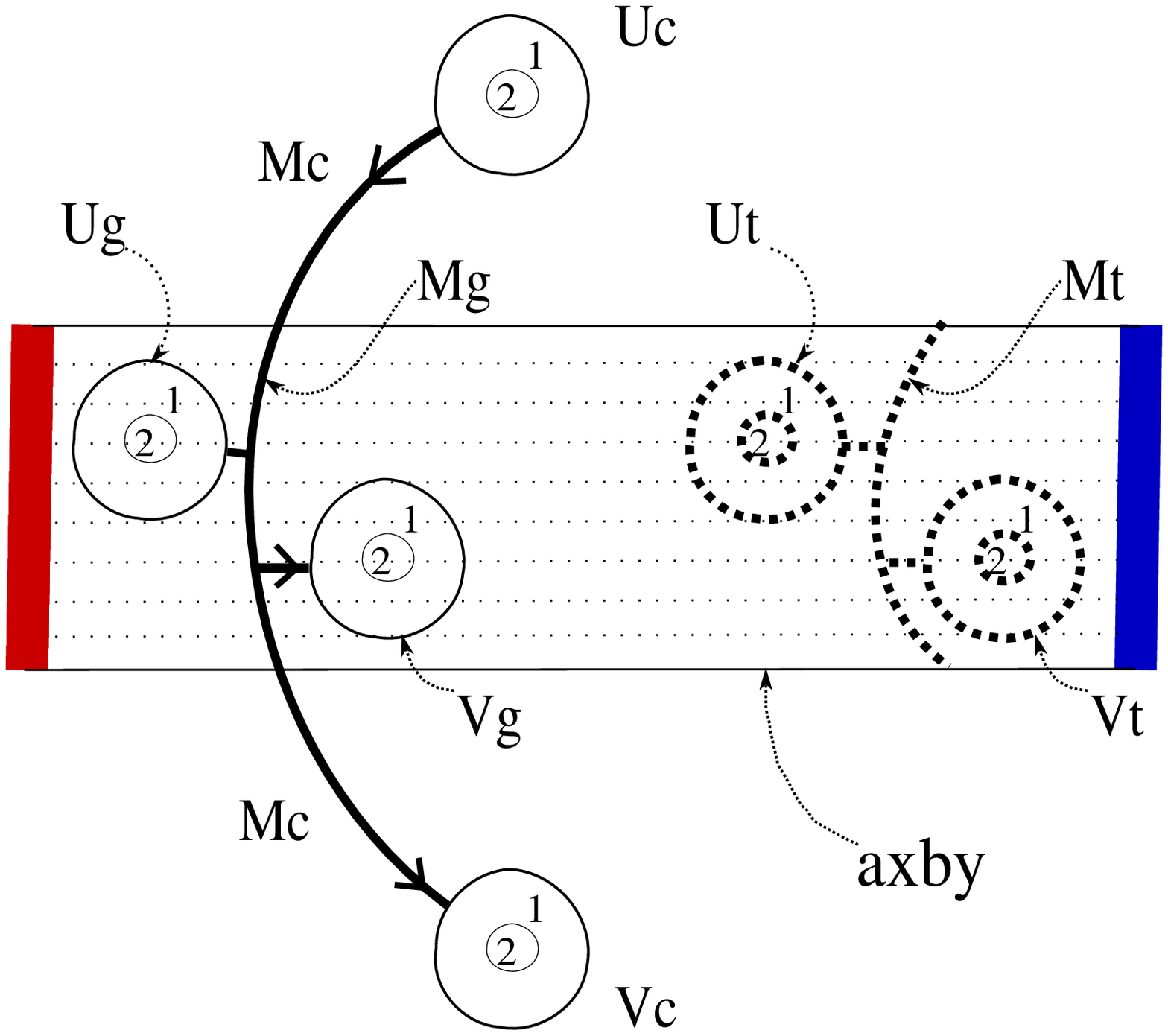}
\end{center}

\newpage
\underline{Defining sets and notation}:\\

To simplfy notation, set $\theta = \gamma^{\alpha^x\beta^y}$.  Set
\[
\mathcal{I}={\rm I(}\gamma{\rm )}\cup {\rm I(}\theta{\rm )}.
\]
While the important points of $\gamma$ and $\theta$ cannot be
important points of $\omega$, the dynamical analysis of the action of
$\omega$ is greatly assisted by paying careful attention to a
neighborhood of the points in $\mathcal{I}$.

%%%%%%%%%%%%%%%%%%%%%%%%%%%%%%%%%%%%%%%%%%%%%%%%
\newcommand{\axby}{\alpha^x\beta^y}
\newcommand{\suppab}{{\rm Supp(}\axby{\rm )}}
\newcommand{\suppg}{{\rm Supp(}\gamma{\rm )}}
\newcommand{\suppt}{{\rm Supp(}\theta{\rm )}}
%%%%%%%%%%%%%%%%%%%%%%%%%%%%%%%%%%%%%%%%%%%%%%%%

The set $\mathcal{I}$ decomposes as a disjoint union of six sets, some of which might be empty.
\[
\mathcal{I} = R_c\cup R_g\cup R_t\cup A_c\cup A_g\cup A_t
\]
Here, $R_c$ is the set of repelling fixed points of $\gamma$ and
$\theta$ which lie outside of the support of $\alpha^x\beta^y$.  We
note in passing that this is the same set for both $\gamma$ and
$\theta$ (subscript $c$ denotes the word ``common'').

Similarly, $A_c$ is the set of attracting fixed points of $\gamma$ and $\theta$ which lie outside the support of $\alpha^x\beta^y$.

The sets $R_g$ and $A_g$ are respectively the sets of repelling and attracting
fixed points of $\gamma$ which happen to lie in the support of
$\alpha^x\beta^y$.

The sets $R_t$ and $A_t$ are respectively the sets of repelling and
attracting fixed points of $\theta$ which lie in the support of
$\alpha^x\beta^y$.  One sees immediately that $R_t =
R_g\alpha^x\beta^y$ and $A_t = R_t\alpha^x\beta^y$.

For each $x\in R_c$, let $n_x$ denote a node of $\mathcal{T}_2$ so
that $x\in\mathcal{C}_{n_x}\subset\overline{\supp{\gamma}}$ and so
that $\mathcal{C}_{n_x}\cap {\rm Supp(}\alpha^x\beta^y{\rm )} =
\emptyset$.  Similarly, for each $y\in A_c$, let $\hat{n}_y$
denote a node of $\mathcal{T}_2$ so that
$y\in\mathcal{C}_{n_y}\subset\overline{\supp{\gamma}}$ and so that
$\mathcal{C}_{n_y}\cap{\rm Supp(}\alpha^x\beta^y{\rm )} = \emptyset$.

For each $x\in R_g$, let $n_x$ denote a node of $\mathcal{T}_2$ so
that $x\in\mathcal{C}_{n_x}\subset\overline{\supp{\gamma}}$ and so
that $\mathcal{C}_{n_x}\cap {\rm Supp(}\alpha^x\beta^y{\rm )} =
\mathcal{C}_{n_x}$.  Similarly, for each $y\in A_g$, let
$\hat{n}_y$ denote a node of $\mathcal{T}_2$ so that
$y\in\mathcal{C}_{\hat{n}_y}\subset\overline{\supp{\gamma}}$ and so
that $\mathcal{C}_{\hat{n}_y}\cap{\rm Supp(}\alpha^x\beta^y{\rm )} =
\mathcal{C}_{\hat{n}_y}$.

For each $x\in R_t$, let $n_x$ denote a node of $\mathcal{T}_2$ so
that $x\in\mathcal{C}_{n_x}\subset\overline{\supp{\theta}}$ and so
that $\mathcal{C}_{n_x}\cap {\rm Supp(}\alpha^x\beta^y{\rm )} =
\mathcal{C}_{n_x}$.  Similarly, for each $y\in A_t$, let
$\hat{n}_y$ denote a node of $\mathcal{T}_2$ so that
$y\in\mathcal{C}_{\hat{n}_y}\subset\overline{\supp{\theta}}$ and so
that $\mathcal{C}_{\hat{n}_y}\cap{\rm Supp(}\alpha^x\beta^y{\rm )} =
\mathcal{C}_{\hat{n}_y}$.

All of the nodes $n_x$ and $\hat{n}_y$ chosen above can be chosen in
such a fashion as to have all of the various properties mentioned
above, as well as the further property that given any distinct pair of
nodes, the underlying Cantor Sets of the nodes are disjoint.  We
assume that the nodes have been chosen in such a fashion.  The reader
may verify that such choices can be made.

Let $R = R_c\cup R_g\cup R_t$, and let $A = A_c\cup A_g\cup A_t$, and set 
\[
\begin{array}{c}
\mathcal{N}_r = \cup_{x\in R}\mathfrak{C}_{n_x}\\
\widehat{\mathcal{N}}_a = \cup_{y\in A}\mathfrak{C}_{\hat{n}_y}.
\end{array}
\]
The set $\mathcal{N}_r$ is a neighborhood of the repelling fixed
points of $\gamma$ and $\theta$, which we will use often in our
calculations below.  A modified version of $\widehat{\mathcal{N}}_a$
will play the role of a similar neighborhood around the
attractors.

There is an integer $K>0$ so that $\mathcal{N}_r\gamma^K\cup
\mathcal{N}_r\theta^K\cup \widehat{\mathcal{N}}_a = \suppg\cup\suppt$.
Replace $\gamma$ by $\gamma^K$, and redefine $\theta$ as
$\gamma^{\axby}$ using the new $\gamma$.  We now have
\[
\mathcal{N}_r\gamma\cup \mathcal{N}_r\theta\cup \widehat{\mathcal{N}}_a = \suppg\cup\suppt.
\]

For each $x\in R$, set $U_x = \mathfrak{C}_{n_x}$.  For each $y\in A$,
set $V_y = \mathfrak{C}_{\hat{n}_y}\backslash(\mathcal{N}_r\gamma\cup
\mathcal{N}_r\theta)$.  Given $x\in R_c$, $R_g$, or $R_t$
respectively, we may denote $U_x$ by $U_x^c$, $U_x^g$, or $U_x^t$ for
clarity.  Similarly, given $y\in A_c$, $A_g$, or $A_t$, we may denote
$V_y$ by $V_y^c$, $V_y^g$, or $V_y^t$ respectively.

Set $\mathcal{N}_a= \cup_{y\in A}V_y$.  This is a generalized
neighborhood of the attracting fixed points of $\gamma$ and $\theta$
lying entirely in the supports of $\gamma$ and $\theta$.

Denote by $M$ the set
$(\suppg\cup\suppt)\backslash(\mathcal{N}_r\cup\mathcal{N}_a)$.  This is
the set of potential support of $\omega$ away from the neighborhoods
of the attracting and repelling fixed points of $\gamma$ and $\theta$.
(The set $M$ is the ``middle'' of the potential support of $\omega$.)
The set $M$ decomposes as a disjoint union of sets $M_c =
M\backslash\suppab$, $M_g = M\cap \suppg\cap\suppab$, and $M_t =
M\cap\suppt\cap\suppab$.

We now use $M$ to further decompose the sets $U_x$ and $V_y$ for $x\in
R$ and $y\in A$.  

Given $x\in U_x$ set
\[
U_{x,1} = \left\{x\in U_x\mid x\gamma\in M \textrm{ or } x\theta\in M\right\}
\]
and set $U_{x,2} = U_x\backslash U_{x,1}$. (At least two applications
of $\gamma$ or $\theta$ are required for a point in $U_{x,2}$ to leave
$U_x$.)  Extend this notation in the obvious fashion so that
$U_{x,i}^c$, $U_{x,i}^g$, and $U_{x,i}^t$ are well defined for $i = 1$
or $i = 2$.  We will not need larger values of $i$ ($i$ loosely
represents minimal ``escape time'' from $U_x$ under the actions of
$\gamma$ or $\theta$) for the analysis to follow.

Finally, and in similar fashion, given $y\in R$, define
\[
V_{y,1}= V_y\cap (M\theta\cup M\gamma)
\]
and set $V_{y,2} = V_y\backslash V_{y,1}$.  Extend this notation in
like fashion so that the sets $V_{y,i}^c$, $V_{y,i}^g$, and
$V_{y,i}^t$ are well defined for indices $i = 1$ and $i = 2$.

We have now defined the fifteen types of (pairwise disjoint) subsets
of the potential support of $\omega$ which we will require in our
analysis of the action of $\omega$ on $\mathfrak{C}$.  In particular,
these are $U_{x,i}^c$, $U_{x,i}^g$, $U_{x,i}^t$, $V_{x,i}^c$,
$V_{x,i}^g$, $V_{x,i}^t$, $M_c$, $M_g$, and $M_t$, where $i =1$ or $i
= 2$.

\underline{Analysis of dynamics}:\\
We are now ready to prove the following lemma.
\begin{lemma}
The element $\omega$ defined above has order $1$, $2$, $3$ or $6$.
\end{lemma}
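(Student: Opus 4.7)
I would prove the lemma by showing that every point $p\in\Gamma = \csupp{\gamma}\cup\csupp{\theta}$ has an orbit of length $1$, $2$, or $3$ under $\langle\omega\rangle$; since $\omega$ fixes the complement of $\Gamma$, this forces $\omega^6 = \mathrm{id}$, yielding the claimed order bound. The verification proceeds by case analysis over the fifteen pairwise disjoint subsets of $\Gamma$ already introduced, and rests on two structural facts. First, by the choice of $x,y$ furnished by Lemma \ref{gammaPastItself}, within $\supp{\alpha^x\beta^y}$ the supports of $\gamma$ and $\theta$ are disjoint: on every $g$-labelled set $\theta$ acts as the identity, and on every $t$-labelled set $\gamma$ acts as the identity. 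Second, outside $\supp{\alpha^x\beta^y}$ the conjugation $\theta = \gamma^{\alpha^x\beta^y}$ is inert, so $\gamma$ and $\theta$ agree at every point whose $\gamma^{\pm 1}$-images also lie outside $\supp{\alpha^x\beta^y}$.

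With these in hand, I would treat the cases in three batches. On the $c$-labelled sets $U_{x,i}^c$, $V_{y,i}^c$, and $M_c$ one traces the four successive images $p\gamma^{-1}$, $p\gamma^{-1}\theta^{-1}$, $p\gamma^{-1}\theta^{-1}\gamma$, $p\omega$ and argues that either the whole orbit stays outside $\supp{\alpha^x\beta^y}$, where $\gamma$ and $\theta$ coincide and $\omega$ reduces to the trivial commutator $[\gamma,\gamma]$ fixing $p$, or else the orbit crosses the boundary in a controlled way, producing a short cycle. On the $g$-labelled sets $U_{x,i}^g$, $V_{y,i}^g$, $M_g$, the triviality of $\theta$ collapses the four-step composition: $p\gamma^{-1}$ displaces $p$ within the $g$-region, then $\theta^{-1}$ either fixes the image or propels it into a $t$-region if the image has crossed the boundary, and so on. The two-level escape-time index $i\in\{1,2\}$ guarantees that each intermediate image lies in a predictable piece of the partition, so direct bookkeeping shows the induced orbit is a cycle of length at most three alternating among $g$-, $t$-, and middle-type sets. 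The $t$-case is symmetric under swapping the roles of $\gamma$ and $\theta$.

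The hard part will be the containment bookkeeping: for each of the fifteen starting locations one must verify that each intermediate image $p\gamma^{-1}$, $p\gamma^{-1}\theta^{-1}$, $p\gamma^{-1}\theta^{-1}\gamma$ lands in a set where the next factor has the expected trivial or affine behaviour, so the disjointness of supports within $\supp{\alpha^x\beta^y}$ can be reinvoked at the next step. This is precisely what the preliminary renormalisation $\gamma\mapsto\gamma^K$ (making $\mathcal{N}_r\gamma\cup\mathcal{N}_r\theta\cup\widehat{\mathcal{N}}_a = \supp{\gamma}\cup\supp{\theta}$) and the escape-time refinement of $U_x$ and $V_y$ were engineered to enable. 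Once the fifteen checks are complete, every orbit of $\omega$ has length dividing $6$, and the lemma follows.
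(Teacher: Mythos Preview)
Your plan is correct and matches the paper's approach exactly: the paper also proves the lemma by showing every point of $\Gamma$ has $\langle\omega\rangle$-orbit of length $1$, $2$, or $3$, via a direct case analysis over the same fifteen sets, exploiting precisely the two structural facts you isolate (disjointness of $\supp{\gamma}$ and $\supp{\theta}$ inside $\supp{\alpha^x\beta^y}$, and coincidence of $\gamma$ and $\theta$ outside it). The paper carries out the orbit tracing explicitly with diagrams for $M_c$, $M_g$, $M_t$, $V_2^c$, $V_1^c$, $V_1^t$, $V_1^g$ and notes the remaining $U$- and $V_2^{g,t}$-sets are pointwise fixed, which is just the execution of the bookkeeping you describe; one small remark is that your $t$/$g$ ``symmetry'' is really the involution $\gamma\leftrightarrow\theta$ sending $\omega$ to $\omega^{-1}$, which preserves orbit lengths, so the argument goes through.
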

\begin{proof}
The support of $\omega$ is contained in the union of the support of
$\gamma$ and the support of $\theta$.  In particular, we need to trace
the orbits of the points in each of the fifteen sets defined in the
paragraph just before the ``Analysis of dynamics'' header.

That effort is simplified by the fact that the action of
$\langle\omega\rangle$ in many of those sets is trivial (as one might
expect, given that $\omega$ is a commutator).

We perform the orbit calculations for some of the sets before confirming
the statement in the last paragraph, in order to acquaint the reader
with a method of orbit calculation.

In the diagram below, if there are several arrows leaving a node, this
represents the fact that a point may move to distinct locations
depending on further subdivisions within the fifteen sets.  It often
occurs that a previous choice at a branch makes later choices invalid.
In our first calculation, we will draw such invalid possibilities with
a ``dotted'' arrow.  Arrows are sometimes decorated with strings to
help explain the dynamics.

If $x\in R$, in the diagrams below, we will drop the occurrence of $x$
from the names of the repulsive sets $U_{x,1}^c$, $U_{x,2}^c$,
$U_{x,1}^g$, $U_{x,2}^g$, $U_{x,1}^t$, and $U_{x,1}^t$, writing
instead names such as $U_1^c$.  We will treat the attractive $V$ sets
in similar fashion.  This should lead to no confusion.

The reader may be assisted in following the calculations below by
recalling that $M_t = M_g\axby$, $U_*^t = U_*^g\axby$, $V_*^t =
V_*^g\axby$, and that these sets are ``parallel'' in some sense due to
the relationship $\mathcal{O}(p,\theta) = \mathcal{O}(p,\gamma)\axby$
for $p\in \mathfrak{C}$.  The reader should also observe that
$\gamma|_{V_*^c}=\theta|_{V_*^c}$ and
$\gamma^{-1}|_{U_*^c}=\theta^{-1}|_{U_*^c}$ for $* = 1$ or $* = 2$.

We now begin to trace orbits.

Assume $x_0\in M_c$.
\[
\xymatrix{
{x_0\in M_c} \ar[r]^{\gamma^{-1}}
             \ar[ddr]^{\gamma^{-1}} 
                     & {x_1\in U_1^c} \ar[r]^{\theta^{-1}}     & {x_2\in U_2^c} \ar[r]^{\gamma} & {x_1} \ar[r]^{\theta}
                                                                                                     \ar@{.>}[dr]^{\theta} & {x_0}\\
& & & &{M_t} \\
  & {x_3\in U_1^g} \ar[r]^{\theta^{-1}}_= & {x_3} \ar[r]^{\gamma} & {x_0} \ar[r]^{\theta} 
                                                                    \ar[dr]^{\theta}_{=\gamma} & {x_4\in V_1^t}\\
& & & & {x_5\in V_1^c}\\
x_4\ar[r]^{\gamma^{-1}}_{=}&x_4\ar[r]^{\theta^{-1}}&x_0\ar[r]^{\gamma}\ar@{.>}[dr]^{\gamma}&x_6\in V_1^g\ar[r]^{\theta}_{=}&x_6\\
 & & & {V_1^c}\\
x_5\ar[r]^{\gamma^{-1}}\ar@{.>}[dr]^{\gamma^{-1}}&x_0\ar[r]^{\theta^{-1}}&x_7\in U_1^t\ar[r]^{\gamma}_{=}&x_7\ar[r]^{\theta}&x_0\\
 & M_g\\
x_6\ar[r]^{\gamma^{-1}}&x_0\ar[r]^{\theta^{-1}}&x_8\in U_1^t\ar[r]^{\gamma}_{=}&x_8\ar[r]^{\theta}&x_0
}
\]

In this example, each diagram component represents the possibilities
of one application of $\omega$.  Under the action of $\langle \omega
\rangle$, the point $x_0$ had potential orbits of length one
($\{x_0\}$), of length two ($\{x_0, x_5\}$), and of length three
($\{x_0,x_4,x_6\}$).

We now free the symbols $x_i$ used in the above calculation, so that
we can enter into similar calculations below with different values for
the $x_i$.  Whenever we are about to trace the orbits of one of the
fifteen sets, we will assume the variables $x_i$ are unbound and
available.

In each of the calculations below, we will no longer draw explicit
``dotted arrows'' for potential branches which cannot actually occur
given previous information within the calculation.

Suppose $x_0\in M_t$.
\[
\xymatrix{
{x_0\in M_t} \ar[r]^{\gamma^{-1}}_=& {x_0} \ar[r]^{\theta^{-1}} \ar[dr]^{\theta^{-1}} 
             & {x_1\in U_1^t} \ar[r]^{\gamma}_= & {x_1} \ar[r]^{\theta}&x_0\\
 & & {x_2\in U_1^c}\ar[r]^{\gamma}&{x_3\in M_g}\ar[r]^{\theta}_=&{x_3}\\
{x_3}\ar[r]^{\gamma^{-1}}& {x_2}\ar[r]^{\theta^{-1}}&{x_3\in U_2^c}\ar[r]^{\gamma}&{x_2}\ar[r]^{\theta}&{x_0}
}
\]
We thus see that the orbits of points in $M_t$ are either trivial or of length two under the action of $\langle \omega\rangle$.

Suppose now that $x_0\in M_g$.
\[
\xymatrix{
{x_0\in M_g} \ar[r]^{\gamma^{-1}}\ar[dr]^{\gamma^{-1}}& {x_1\in U_1^g} \ar[r]^{\theta^{-1}}_= 
             & {x_1}\ar[r]^{\gamma}& {x_0}\ar[r]^{\theta}_=&{x_0}\\
 & {x_2\in U_1^c}\ar[r]^{\theta^{-1}}&{x_3\in U_2^c}\ar[r]^{\gamma}&{x_2}\ar[r]^{\theta}& {x_4\in M_t}\\
{x_4}\ar[r]^{\gamma^{-1}}_=& {x_4}\ar[r]^{\theta^{-1}}&{x_2}\ar[r]^{\gamma}&{x_0}\ar[r]^{\theta}_=&{x_0}
}
\]
So the points in $M_g$ also have only trivial orbits or orbits of
length two under the action of $\langle \omega\rangle$.

Armed with these previous examples the reader should now be able to
check that if an initial point $p$ has $p\in U_\#^*$, then for any
valid values of the symbols $\#$ and $*$, $\omega$ fixes $p$.  For
$p\in V_2^*$ with $*$ not ``c'', it is also easy to see that $p\omega
=p$.  Thus, we have seen that our lemma is supported over the sets
$M_c$, $M_g$, $M_t$, $U_1^c$, $U_2^c$, $U_1^g$, $U_2^g$, $U_1^t$,
$U_2^t$, $V_2^g$ and $V_2^t$.  There remain four sets to check.

Suppose now that $x_0\in V_2^c$.
\[
\xymatrix{
{x_0\in V_2^c}  \ar[r]^{\gamma^{-1}}
                \ar[ddr]^{\gamma{-1}} & {x_1\in V_1^c} \ar[r]^{\theta^{-1}} 
                                                      \ar[dr]^{\theta^{-1}} & {x_2\in M_c}\ar[r]^{\gamma} & {x_1}\ar[r]^{\theta}&{x_0}\\
& &{x_3\in M_t} \ar[r]^{\gamma}_=&{x_3}\ar[r]^{\theta}&{x_1}\\
&{x_4\in V_2^c} \ar[r]^{\theta^{-1}}
                \ar[dr]^{\theta^{-1}} & {x_5\in V_1^c} \ar[r]^{\gamma} & {x_4}\ar[r]^{\theta} &{x_0}\\
& & {x_6\in V_2^c} \ar[r]^{\gamma} & {x_4}\ar[r]^{\theta} & {x_0}\\
{x_1} \ar[r]^{\gamma^{-1}}&{x_7\in M_g} \ar[r]^{\theta^{-1}}_=&{x_7}\ar[r]^{\gamma}&{x_1}\ar[r]^{\theta} & {x_0}\\
}
\]
In particular, the orbits in $\mathfrak{C}$ under the action of
$\langle\omega\rangle$ which actually intersect $V_2^c$ are either
trivial or of length two.

Suppose now $x_0\in V_1^c$.
\[
\xymatrix{
{x_0\in V_1^c}  \ar[r]^{\gamma^{-1}}
                \ar[ddr]^{\gamma{-1}} & {x_1\in V_1^c} \ar[r]^{\theta^{-1}} 
                                                      \ar[dr]^{\theta^{-1}} & {x_2\in M_c}\ar[r]^{\gamma} & {x_1}\ar[r]^{\theta}&{x_0}\\
& &{x_3\in M_t} \ar[r]^{\gamma}_=&{x_3}\ar[r]^{\theta}&{x_1}\\
&{x_4\in V_2^c} \ar[r]^{\theta^{-1}}
                \ar[dr]^{\theta^{-1}} & {x_5\in V_1^c} \ar[r]^{\gamma} & {x_4}\ar[r]^{\theta} &{x_0}\\
& & {x_6\in V_2^c} \ar[r]^{\gamma} & {x_4}\ar[r]^{\theta} & {x_0}\\
{x_1} \ar[r]^{\gamma^{-1}}&{x_7\in M_g} \ar[r]^{\theta^{-1}}_=&{x_7}\ar[r]^{\gamma}&{x_1}\ar[r]^{\theta} & {x_0}\\
}
\]
Thus, every orbit under the action of $\langle \omega\rangle$ which intersects $V_1^c$ is either trivial or of order two.

Suppose now $x_0\in V_1^t$.
\[
\xymatrix{
{x_0\in V_1^t}\ar[r]^{\gamma^{-1}}_=&{x_0}\ar[r]^{\theta^{-1}}
                                       \ar[dr]^{\theta^{-1}}&{x_1\in M_t}\ar[r]^{\gamma}_=&{x_1}\ar[r]^{\theta}&{x_0}\\
 & &{x_2\in M_c}\ar[r]^{\gamma} & {x_3\in V_1^g}\ar[r]^{\theta}_=&{x_3}\\
x_3\ar[r]^{\gamma^{-1}}&x_2\ar[r]^{\theta^{-1}}
                         \ar[dr]^{\theta^{-1}}&{x_4\in U_1^t}\ar[r]^{\gamma}_=&{x_4}\ar[r]^{\theta}&{x_2}\\
 & & {x_5\in U_1^c}\ar[r]^{\gamma}&{x_2}\ar[r]^{\theta}&{x_0}\\
{x_2}\ar[r]^{\gamma^{-1}}&{x_6\in U_1^g}\ar[r]^{\theta^{-1}}_=&{x_6}\ar[r]^{\gamma}&{x_2}\ar[r]^{\theta}&{x_0}
}
\]
In this case, it is possible for points in $V_1^t$ to be in orbits of
order one, two, or three under the action of $\langle\omega\rangle$.

Finally, suppose $x_0\in V_1^g$.
\[
\xymatrix{
x_0\in V_1^g\ar[r]^{\gamma^{-1}}
            \ar[dr]^{\gamma^{-1}} &{x_1\in M_g}\ar[r]^{\theta^{-1}}_=&{x_1}\ar[r]^{\gamma}&{x_0}\ar[r]^{\theta}_=&{x_0}\\
 & {x_2\in M_c}\ar[r]^{\theta^{-1}}
               \ar[dr]^{\theta^{-1}} & {x_3\in U_1^t}\ar[r]^{\gamma}_= & {x_3}\ar[r]^{\theta}&{x_2}\\
 & & {x_4\in U_1^c}\ar[r]^{\gamma}&{x_2}\ar[r]^{\!\!\!\!\!\!\!\!\!\!\!\!\!\!\!\!\!\!\!\!\!\!\!\!\!\!\!\!\theta}&{x_5=x_0\axby\in V_1^t}\\
{x_2}\ar[r]^{\gamma^{-1}}&{x_6\in U_1^g}\ar[r]^{\theta^{-1}}_=&{x_6}\ar[r]^{\gamma}&{x_2}\ar[r]^{\!\!\!\!\!\!\!\!\!\!\!\!\!\!\!\!\!\!\!\!\!\!\!\!\!\!\!\!\theta}&{x_5=x_0\axby\in V_1^t}\\
{x_5}\ar[r]^{\gamma^{-1}}_=&{x_5}\ar[r]^{\theta^{-1}}&{x_2}\ar[r]^{\gamma}&{x_0}\ar[r]^{\theta}_=&{x_0}
}
\]
Hence, under the action of $\langle \omega \rangle$, we see $V_1^g$ is
also a set where orbits in $\mathfrak{C}$ which interesect $V_1^g$ may
be of order one, two, or three.

We have now shown that if $p\in\mathfrak{C}$, then the cardinality of
$\mathcal{O}(p,\omega)$ is one, two, or three.  We conclude that the
order of $\omega$ divides six.
\end{proof}
We have therefore found a contradiction to the existence of our
supposed embedding of $Z^2*Z$ into $V$; the element $\omega$ is the
image of a non-trivial element of $Z^2*Z$ under an embedding, so $\omega$
must have infinite order.  In particular, there are no injections from
$Z^2*Z$ into $V$.

\newpage
\bibliographystyle{amsplain}
\bibliography{dirBib}
\end{document}